\documentclass[11pt, a4paper, titlepage]{article}
\usepackage[english]{babel}
\usepackage[T1]{fontenc}
\usepackage[utf8]{inputenc}

\usepackage{textgreek}
\usepackage{sidecap}
\usepackage{titlesec}
\usepackage[usenames,dvipsnames,svgnames,table]{xcolor}
\usepackage{graphicx}
\usepackage{caption}
\usepackage{subcaption}
\usepackage{siunitx}
\usepackage{xcolor}
\usepackage{enumitem}

%% which works for rotating?
\usepackage{graphicx}
\usepackage{rotating}

\usepackage{caption}
\captionsetup[figure]{font={small,it}}
\captionsetup[figure]{labelfont={color=blue}}

\usepackage{amsmath}
\usepackage{amsfonts}
\usepackage{amsthm}
\usepackage{mathtools}
\usepackage{amssymb}
\usepackage{mathrsfs}
\usepackage{dsfont}

\allowdisplaybreaks

% declare a new theorem style
\newtheoremstyle{mythm}%
{3pt}% Space above
{3pt}% Space below 
{\itshape\color{black}}% Body font
{}% Indent amount
{\bfseries\color{blue}}% Theorem head font
{.}% Punctuation after theorem head
{.5em}% Space after theorem head
{}% Theorem head spec (can be left empty, meaning ‘normal’)

% declare a new theorem style
\newtheoremstyle{mydef}%
{3pt}% Space above
{3pt}% Space below 
{\upshape\color{black}}% Body font
{}% Indent amount
{\bfseries\color{blue}}% Theorem head font
{.}% Punctuation after theorem head
{.5em}% Space after theorem head
{}% Theorem head spec (can be left empty, meaning ‘normal’)

\theoremstyle{mythm}

\newtheorem{theorem}{Theorem}[section] % numbers in sections
\newtheorem{corollary}[theorem]{Corollary}
\newtheorem{proposition}[theorem]{Proposition}
\newtheorem{lemma}[theorem]{Lemma}
\newtheorem*{theorem*}{Theorem}

\theoremstyle{mydef}
\newtheorem{definition}[theorem]{Definition}

\numberwithin{equation}{section}

\usepackage{pgf,tikz}
\usetikzlibrary{calc,patterns,angles,quotes}

\usepackage{float}
\usetikzlibrary{shapes.misc}
\tikzstyle{root}=[circle,fill=black,inner sep=0pt,minimum size=8pt]
\tikzstyle{steiner}=[circle,fill=blue,inner sep=0pt,minimum size=8pt]
\tikzstyle{terminal}=[fill=red]

\tikzstyle{X}=[circle,fill=blue,inner sep=0pt,minimum size=6pt]
\tikzstyle{T}=[fill=red,inner sep=0pt,minimum size=4pt]
\tikzstyle{C}=[fill=red,inner sep=0pt,minimum size=4pt]
\tikzstyle{R}=[circle,fill=black,inner sep=0pt,minimum size=4pt]
\tikzstyle{background-line}=[color=green!55]

\tikzset{cross/.style={cross out, draw=black, minimum size=2*(#1-\pgflinewidth), inner sep=0pt, outer sep=0pt},
	%default radius will be 1pt. 
	cross/.default={2pt}}
	
\usepackage[dvipsnames]{xcolor}

\definecolor{mygray}{gray}{0.28}
\definecolor{myorange}{RGB}{220,120,50}

\newcommand{\oc}{i_c}
\newcommand{\ipinf}{i_{+\infty}}
\newcommand{\iminf}{i_{-\infty}}
\newcommand{\ta}{x}

\newcommand{\nl}{\nonumber \\}

% orcid stuff
\usepackage{nccfoots}
\usepackage{scalerel}
\usetikzlibrary{svg.path}

\definecolor{orcidlogocol}{HTML}{A6CE39}
\tikzset{
  orcidlogo/.pic={
    \fill[orcidlogocol] svg{M256,128c0,70.7-57.3,128-128,128C57.3,256,0,198.7,0,128C0,57.3,57.3,0,128,0C198.7,0,256,57.3,256,128z};
    \fill[white] svg{M86.3,186.2H70.9V79.1h15.4v48.4V186.2z}
                 svg{M108.9,79.1h41.6c39.6,0,57,28.3,57,53.6c0,27.5-21.5,53.6-56.8,53.6h-41.8V79.1z M124.3,172.4h24.5c34.9,0,42.9-26.5,42.9-39.7c0-21.5-13.7-39.7-43.7-39.7h-23.7V172.4z}
                 svg{M88.7,56.8c0,5.5-4.5,10.1-10.1,10.1c-5.6,0-10.1-4.6-10.1-10.1c0-5.6,4.5-10.1,10.1-10.1C84.2,46.7,88.7,51.3,88.7,56.8z};
  }
}

\newcommand\orcidicon[1]{\href{https://orcid.org/#1}{\mbox{\scalerel*{
\begin{tikzpicture}[yscale=-1,transform shape]
\pic{orcidlogo};
\end{tikzpicture}
}{|}}}}

\usepackage{hyperref}

\begin{document}

\thispagestyle{plain}
\begin{center}
    \huge{\textbf{Convective stability of the critical waves of an FKPP growth process}}
        
    \vspace{0.8cm}
    \Large{Florian Kreten}* \orcidicon{0000-0003-1938-2590} \\
    \vspace{0.2cm}
    \Large{\today}
       
    \vspace{1cm}
    \textbf{Abstract}
\end{center}
We construct the traveling wave solutions of an FKPP growth process of two densities of particles, and prove that the critical traveling waves are locally stable in a space where the perturbations can grow exponentially at the back of the wave. The considered reaction-diffusion system was introduced by Hannezo et al. in the context of branching morphogenesis (Cell, 171(1):242-255.e27, 2017): active, branching particles accumulate inactive particles, which do not react. Thus, the system features a continuum of steady state solutions, complicating the analysis. We adopt a result by Faye and Holzer (J.Diff.Eq., 269(9):6559-6601, 2020) for proving the stability of the critical traveling waves, by modifying the semi-group estimates to a space with unbounded weights. The novelty is that we use a Feynman-Kac formula to get an exponential a-priori estimate for the left tail of the PDE, in the regime where the weight is unbounded.\\
\vspace{0.2cm}

\noindent \textbf{Key words}: Traveling waves, Local stability, Convective stability, Reaction-Diffusion equation, Branching particle system.\\
\textbf{MSC2020}: 35B35, 35C07, 35K57, 34E10, 92C15.\\
\vspace{0.2cm}

\footnotesize{* Institut für Angewandte Mathematik, Rheinische Friedrich-Wilhelms-Universität, Endenicher Allee 60, 53115 Bonn, Germany. Email: florian.kreten@uni-bonn.de.\\
This work was partly funded by the Deutsche Forschungsgemeinschaft (DFG, German Research Foundation) under Germany’s Excellence Strategy - GZ 2047/1, Projekt-ID 390685813 and by the Deutsche Forschungsgemeinschaft (DFG, German Research Foundation) - Projektnummer 211504053 - SFB 1060.\\
\textbf{Acknowledgements:} The author would like to thank Anton Bovier and Juan Velázquez for the discussions and their support.}
\newpage

\normalsize

\section{Introduction and results}

We analyze an FKPP-system \cite{Fisher_1937_Wave, KPP_1937_Wave} that models a self-organized growth process. Considering the one-dimensional case $z \in \mathds{R}, t \in \mathds{R}^+_0$, the densities $A(t,z), I(t,z) \geq 0$ of active and inactive particles follow dynamics
\begin{align}
\begin{aligned} \label{Eq:Perturbed_PDE}
\frac{\partial}{\partial t} A &= \frac{\partial^2}{\partial z^2}A + A - A(A+I), \\
\frac{\partial}{\partial t} I &= d \frac{\partial^2}{\partial z^2} I + rA + A(A+I), \qquad r,d \geq 0.
\end{aligned}
\end{align}
This system was introduced by Hannezo et al. in the context of branching morphogenesis \cite{Hannezo_2017_Unifying}. The authors used a stochastic branching particle system to model the morphogenesis of branched glandular structures. The PDE \eqref{Eq:Perturbed_PDE} for $d=0$ is the heuristic hydrodynamic limit of their stochastic system. Existence and uniqueness of non-negative solutions of \eqref{Eq:Perturbed_PDE} follow by classical fixed-point theory, see e.g. chapter 14 in \cite{Smoller_Shocks_Reaction_Diffusion}. Given the normalized System \eqref{Eq:Perturbed_PDE}, the general case can be obtained by rescaling \cite{Kreten2022}.

We construct the traveling waves of System \eqref{Eq:Perturbed_PDE} and prove that for $d>0$, those with minimal speed are locally stable against perturbations. The difficulty when analyzing this system is that the inactive particles $I$ do not react, only the active particles $A$ branch and become inactive upon collision. Thus, the system features a continuum of steady states
\begin{align}
P_I = \{A = 0, I = K \, \vert \, K \in \mathds{R}\}, \label{Eq:Cont_deg_fixed_points}
\end{align}
and a-priori, we do not know which of these steady states are relevant.

\begin{figure}[h]
 		\centering
 		\begin{minipage}[c]{0.90\textwidth}
\begin{picture}(100,100)
	\put(0,0){\includegraphics[width=\textwidth]{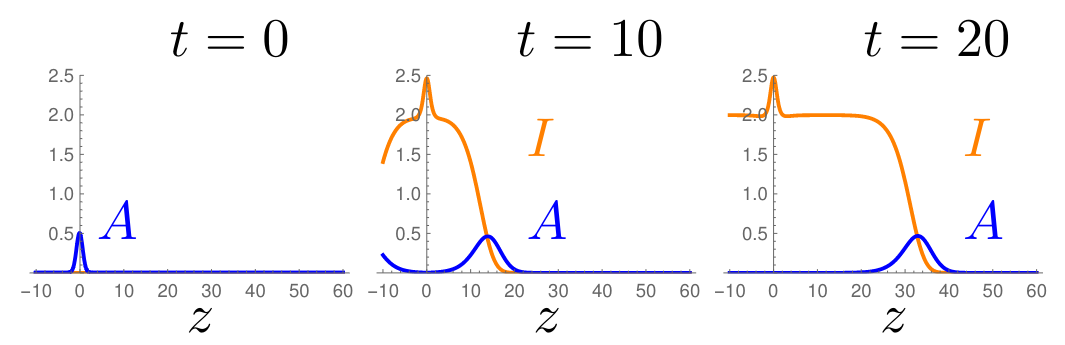}}
\end{picture} 	
 		\end{minipage}
 	\begin{minipage}[c]{\textwidth}
 	\vspace{0.2cm}
	\caption{Simulation of System \eqref{Eq:Perturbed_PDE} for $r, d = 0$. Given an initial heap of active particles $A(z,0)=1/2 \exp (-z^2)$, and $I(z,0) = 0$, two identical traveling fronts arise, the right one is shown. After the separation of the two fronts away from the origin, the density of the remaining inactive particles is given by $I = 2$ and the front moves asymptotically with speed $c=2$.} \label{Diff_system_pics}
	\end{minipage}
\end{figure}

Simulations show that the system forms traveling wave solutions, which select for particular steady states among all possible ones, see Figure \ref{Diff_system_pics}. FKPP-systems are well-known to form heteroclinic traveling wave solutions, that connect two different steady states \cite{Bramson_1983ConvergenceOS, Fisher_1937_Wave, KPP_1937_Wave, FAYE_lotke_volterra_critical_stability, Ducrot2019_Consecutive_Fronts}. A (right-) traveling wave solution is constant in the moving frame $x = z -ct$, for a wave-speed $c >0 $. Thereby, we refer to a traveling wave as a non-constant and bounded solution of the system of ODEs
\begin{align}
\begin{aligned}
0 &= c \frac{\partial}{\partial x} a + \frac{\partial^2}{\partial x^2} a + a -a(a+i), \\
0 &= c \frac{\partial}{\partial x} i + d \frac{\partial^2}{\partial x^2}i + ra + a(a+i). \label{Eq:perturbed_wave}
\end{aligned}
\end{align}
We call a solution of System \eqref{Eq:perturbed_wave} non-negative if $a,i \geq 0$. Moreover, an \textit{invading front} is a non-negative traveling wave where both $a(x)$ and $i(x)$ vanish as $x \rightarrow + \infty$.

For the case $d=0$, we constructed the traveling waves of the system, but could not analyze their stability \cite{Kreten2022}. Since the inactive particles neither react nor diffuse, any deviation from the traveling wave remains in the system for all times (see Fig. \ref{Diff_system_pics}). Therefore, we introduce the diffusion to the inactive particles. The present article is divided into two parts:

\textit{1)} In Section \ref{Sec:Existence_big_section}, given the traveling waves for $d=0$ as our starting point, we apply perturbation techniques to construct the traveling waves for $d \neq 0$. These waves are continuous in $d \geq 0$, so we recover the original dynamics as $d \rightarrow 0$. We prove the existence of a continuum of traveling wave solutions, that correspond to the continuum of steady states \eqref{Eq:Cont_deg_fixed_points}, check Theorems \ref{Old_main_Theorem} and \ref{Prop:Existence_of_a_tr_sol}. For this introduction, we restrict to the invading fronts. In particular, there exists an invading front with minimal possible speed $c=2$, referred to as \textit{critical front}:

\begin{theorem} \label{Thm:existence_small_epsilon}
For $r \geq 0, c>0$, consider the Wave System \eqref{Eq:perturbed_wave} with
\begin{align}
0< d  < \min \big \{ 1, \frac{3c}{2} , \frac{c^2}{2(r+1)} \big \}. \label{Eq:Contidion_existence_final}
\end{align}
If and only if $c \geq 2$, there exists an invading front. The function $i(x)$ is decreasing, and $a(x)$ has a unique local and global maximum. As $x \rightarrow - \infty$, the front converges exponentially fast to a fixed point $(a,i) = (0,\iminf)$, where
\begin{align}
1< 2 - d \cdot \frac{ 2(r+1) }{c} < \iminf < 2. \label{Eq:bound_invading_iminf}
\end{align}
The rate of convergence is a function of  $\iminf$, given by
\begin{align}
\mu_{-\infty} = - \frac{c}{2} + \sqrt{ \frac{c^2}{4} + i_{- \infty} -1} >0.
\end{align}
There are two possibilities for the speed of convergence as $x \rightarrow + \infty$. In the critical case $c = 2$, the front behaves as $x \cdot e^{-x}$. If $c>2$, convergence is purely exponential, with rate
\begin{align}
\mu_{+ \infty} =  \frac{c}{2} - \sqrt{ \frac{c^2}{4} -1} >0. \label{Eq:Rates_Conv_Thm}
\end{align}
\end{theorem}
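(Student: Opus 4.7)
The plan is to treat $d$ as a small parameter and to continue the invading front constructed at $d=0$ in the previous work (Theorem \ref{Old_main_Theorem}) into a one-parameter family indexed by $d \in (0, d_0)$, subject to the constraints in \eqref{Eq:Contidion_existence_final}. To begin, I recast \eqref{Eq:perturbed_wave} as a four-dimensional first-order ODE system in the variables $(a, a', i, i')$. Its set of equilibria is the line $\{(0,0,K,0) : K \in \mathds{R}\}$, reflecting the degeneracy already visible in \eqref{Eq:Cont_deg_fixed_points}, and a genuine invading front corresponds to a heteroclinic connection from $(0,0,i_{-\infty},0)$ to $(0,0,0,0)$.

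Linearizing the $a$-equation at $(0,0,0,0)$ yields the characteristic equation $\mu^2 + c\mu + 1 = 0$. For $c < 2$ the roots are complex, so any solution decaying to zero must oscillate and would violate $a \geq 0$; this rules out invading fronts for subcritical speeds. For $c > 2$ there are two real stable rates, the slower being $\mu_{+\infty}$ of \eqref{Eq:Rates_Conv_Thm}, and the double root at $c=2$ produces the $x e^{-x}$ correction through variation of parameters. At $(0,0,i_{-\infty},0)$ the linearized $a$-equation is $\mu^2 + c\mu + (1-i_{-\infty}) = 0$, which has a positive root $\mu_{-\infty}$ precisely when $i_{-\infty}>1$. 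Linearizing the $i$-equation gives $d \partial_{xx} i + c \partial_x i = 0$, contributing a zero eigenvalue tangent to the equilibrium line and a fast stable rate $-c/d$.

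The core of the proof is a continuation argument from $d = 0$. I would parametrize candidate orbits by the pair $(d, K)$, where $K$ is the asymptotic value of $i$ at $-\infty$ along the one-dimensional unstable manifold of $(0,0,K,0)$, and use a shooting / implicit-function argument to match this unstable manifold with the stable manifold of $(0,0,0,0)$. The zero eigenvalue on the equilibrium line is controlled via the integral identities
\begin{align*}
\int_{\mathds{R}} a \, dx = \int_{\mathds{R}} a(a+i) \, dx, \qquad c \, i_{-\infty} = (r+1) \int_{\mathds{R}} a \, dx,
\end{align*}
obtained by integrating \eqref{Eq:perturbed_wave} over $\mathds{R}$ and using $a(\pm \infty) = a'(\pm \infty) = i'(\pm \infty) = 0$. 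These relations play the role of a Fredholm-type solvability condition that selects the correct $K = i_{-\infty}$. A perturbation expansion in $d$ around the $d=0$ front, where $i_{-\infty}|_{d=0} = 2$, yields $i_{-\infty} = 2 - O(d)$ and, after bookkeeping of the $O(d)$ correction, the explicit bounds in \eqref{Eq:bound_invading_iminf}.

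Having produced the orbit, the structural claims follow from direct ODE arguments. Monotonicity $i' \leq 0$ is obtained by rewriting the $i$-equation as $(d \partial_{xx} + c \partial_x) i = - (ra + a(a+i)) \leq 0$ and applying a maximum-principle / integrating-factor argument with boundary conditions $i'(\pm \infty) = 0$. The uniqueness of the local maximum of $a$ follows by analyzing the sign of $a'' = -c a' - a + a(a+i)$ at the critical points of $a$, together with the fact that $a + i$ crosses the value $1$ exactly once along the orbit. The convergence rates, including the logarithmic correction at $c=2$, then come from the stable/unstable manifold theorem applied at each endpoint. The main obstacle throughout is the degeneracy caused by the continuum \eqref{Eq:Cont_deg_fixed_points}: the standard transversality argument for heteroclinic orbits fails, and it is precisely the integral relations above that replace transversality and make the continuation in $d$ well-posed.
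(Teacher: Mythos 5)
Your outline follows the same broad strategy as the paper, namely continuing the $d=0$ front into $d>0$ through the four--dimensional phase space and using integral identities, but several steps are genuine gaps rather than details to fill in. First, the passage from $d=0$ to $d>0$ is a \emph{singular} perturbation: the $i'$-equation in the first-order system \eqref{Eq:perturbed_wave_ODE} contains a coefficient $c/d$ that blows up as $d\to 0$, so a naive implicit-function or shooting continuation from the $d=0$ orbit is not well posed. The paper spends the entire Appendix \ref{Sec:Geom_sing_appl} invoking F\'{e}nichel's geometric singular perturbation theory to produce an invariant slow manifold $\mathcal{M}_d$ on which the flow is a \emph{regular} $\mathcal{O}(d)$ perturbation of $S_0$; you do not address this at all. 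Second, your shooting argument that matches the $1$-dimensional unstable manifold of $(0,0,i_{-\infty},0)$ to the $3$-dimensional stable manifold of $(0,0,0,0)$ and then invokes a Fredholm-type solvability condition is not carried to the point where it would actually select $K=i_{-\infty}$: the integral identities you write down hold identically for \emph{every} connecting orbit and therefore do not constitute a solvability constraint on $K$. The paper avoids transversality altogether and instead tracks $M^-_d(K)$, proves persistence of the wave properties \hyperlink{Ass_A}{\textbf{TW}} in $d$ (Proposition \ref{Thm:Non_neg_existance_small_eps}) and continuity in $K$ (Lemma \ref{Lem:K_continuity}), and then increases $K$ until $i_{+\infty}=0$ by an intermediate-value argument (Theorem \ref{Thm:existence_given_nonneg}).

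Third, your argument would at best yield existence for $d$ small, whereas the theorem asserts existence for all $d$ satisfying \eqref{Eq:Contidion_existence_final}, which is an explicit, non-perturbative range. The paper achieves this by the ``continuation until failure'' step in Theorem \ref{Prop:Existence_of_a_tr_sol}: it defines $d^\ast$ as the supremum of $d$ for which \hyperlink{Ass_A}{\textbf{TW}} holds and shows that if $d^\ast$ does not violate \eqref{Eq:Eps_bound_1_thm}--\eqref{Eq:Eps_bound_2_thm} then the trajectory at $d^\ast$ also satisfies \hyperlink{Ass_A}{\textbf{TW}}, contradicting maximality of $d^\ast$; this uses Proposition \ref{Lem:Epsilon_limit_bounds} to keep $i_{+\infty}>0$ along the way. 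Finally, the explicit bound \eqref{Eq:bound_invading_iminf} is not obtained by a perturbation expansion as you suggest; the proof of Proposition \ref{Lem:Epsilon_limit_bounds} proceeds by integrating by parts twice and using the estimates $-\int a\, i'>0$ and $-ai'\leq -i'$ (since $a<1$) together with $\int a = c(i_{-\infty}-i_{+\infty})/(1+r)$, and none of this is visible in your sketch.
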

Such an invading front arises in the simulation depicted in Figure \ref{Diff_system_pics}: a pulse of active particles is accompanied by a monotone wave of inactive particles.\\

\textit{2)} In Section \ref{Sec:Stability_big}, we analyze the stability of the critical invading front. In the moving frame $x = z -2t$, we write a solution of the PDE \eqref{Eq:Perturbed_PDE} as the sum of the front $a(x), i(x)$ and the perturbations $\tilde{A}(t,x), \tilde{I}(t,x)$:
\begin{align}
A(t,x ) = a(x) + \tilde{A}(t,x), \quad I(t,x) = i(x) + \tilde{I}(t,x).
\end{align}

The system is not attracted towards a particular limit, as none of the Steady States \eqref{Eq:Cont_deg_fixed_points} is stable in the classical sense. To overcome this, we operate in a space where we allow the perturbations to grow exponentially as $x \rightarrow - \infty$. This type of stability is referred to as \textit{convective stability} \cite{Ghazaryan_overview}, since we more or less ignore what happens to perturbations that are convected to the back of the invading front. To stabilize the front as $x \rightarrow + \infty$, perturbations must vanish exponentially fast, which is typical for FKPP-fronts \cite{SATTINGER_stability, SANDSTEDE_stability_traveling}. Given a smooth weight $w(x) >0$, subject to
\begin{align}
w(x) = \begin{cases}
e^{- x} & \quad x \geq 1, \\
e^{ -\alpha x} & \quad x \leq -1, \label{Def:Weight1} \quad \text{with fixed } \alpha \in (0,1), \\
1  & \quad x = 0,
\end{cases}
\end{align}
we prove that if the weighted perturbations
\begin{align}
\frac{\tilde{A}(t,x)}{w(x)}, \quad \frac{ \tilde{I}(t,x)}{w(x)}
\end{align}
are initially small, they vanish point-wise with algebraic decay $t^{-3/2}$. In order to deal with the unbounded weight, we need some a-priori control of the left tail of the PDE. The novelty of the present work is that we use a Feynman-Kac formula to prove exponential decay of the left tail of the perturbations, under the assumption that also the unweighted perturbations are initially small.

We have to assume that the discrete spectrum of the linearized perturbation equation contains no elements with non-negative real-part, check Section \ref{Sec:Prelim_Discussion_Space} for the precise statement. We verify this numerically, the technique is presented in Appendix \ref{Sec:Numerical_spectrum}, an analytic proof remains an open problem.

\begin{theorem} \label{Thm:large_time_decay}
For a pair $ d >0, r \geq 0$ as in Theorem \ref{Thm:existence_small_epsilon}, consider the critical invading front with speed $c=2$. If we assume that the discrete spectrum of the weighted linearized perturbation equation contains no elements with non-negative real-part (see \eqref{Ass:Point_spectrum}), then the critical invading front is locally stable in a space with weight $w(x)$ as above \eqref{Def:Weight1}:

Fix a pair of constants $C, \mu_0 >0$. For all $\epsilon >0$, there exists a $\delta >0$, such that if the unweighted perturbations fulfill
\begin{align}
& (i) & \forall x \leq 0: \quad  \vert \tilde{A}(0,x)  \vert  \leq C e^{\mu_0 x}, \quad \vert \tilde{I}(0,x) & \vert  \leq \delta, \label{Unweighted_Requirements}
\intertext{and if the weighted perturbations $\tilde{A}(0,x)/w(x), \tilde{I}(0,x) / w(x)$ are elements of $H^2(\mathds{R})$, and}
& (ii) & \int_{ \mathds{R}}(1+ \vert x \vert) \Big [  \big \vert \frac{\tilde{A}(0,x)}{w(x)} \big \vert + \big \vert \frac{ \tilde{I}(0,x)}{w(x)} \big \vert \Big ]\, dx & \leq \delta, \\
& (iii) &  \big \vert \big \vert \frac{\tilde{A}(0,x)}{w(x)} \big \vert \big \vert_{\infty} +  \big \vert \big \vert \frac{ \tilde{I}(0,x)}{w(x)} \big \vert \big \vert_{\infty} & \leq \delta,
\end{align}
then the weighted perturbations decay point-wise with algebraic speed $t^{-3/2}$:
\begin{align}
\sup_{t \geq 0} \sup_{x \in \mathds{R}} \, \frac{(1+t)^{3/2}}{w(x)  (1+ \vert x\vert) } \Big (  \vert \tilde{A}(t,x) \vert  +  \vert \tilde{I}(t,x) \vert  \Big ) \leq \epsilon.
\end{align}
\end{theorem}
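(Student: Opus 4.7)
The plan is to follow the Faye--Holzer scheme for critical FKPP fronts, adapted to the unbounded weight $w$. The first step is to pass to the weighted perturbation variables $U(t,x) = \tilde{A}(t,x)/w(x)$ and $V(t,x) = \tilde{I}(t,x)/w(x)$, which live in a standard bounded function space. A direct computation produces a semilinear system $\partial_t (U,V)^T = \mathcal{L}_w (U,V)^T + \mathcal{N}_w(U,V)$, where $\mathcal{L}_w$ is the conjugation of the linearization around the front; its coefficients are smooth, bounded, and asymptotically constant at $\pm\infty$. In particular, at $+\infty$ the conjugation by $e^{-x}$ realises the standard critical FKPP situation, and at $-\infty$ the weight $e^{-\alpha x}$ with $\alpha \in (0,1)$ pushes the essential spectrum of the system into the open left half-plane, because $i(x) \to \iminf > 1$ there. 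The nonlinearity $\mathcal{N}_w$ is purely quadratic and carries an extra factor of $w$, since every nonlinear term contains $\tilde{A}$; this factor is the root of the main difficulty.

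The second step is a resolvent and spectral analysis of $\mathcal{L}_w$. Using the exponential convergence of the front from Theorem \ref{Thm:existence_small_epsilon}, I parametrise the two branches of the essential spectrum by the Fourier modes of the constant-coefficient operators at $\pm\infty$ and verify that the entire essential spectrum sits in $\{\mathrm{Re}\,\lambda < 0\}$, apart from a tangency at the origin where a Jordan-block resonance is present, as is standard for critical FKPP. The discrete spectrum in $\{\mathrm{Re}\,\lambda \geq 0\}$ is excluded by the hypothesis on the point spectrum. Then I deform the Bromwich contour representing $e^{t\mathcal{L}_w}$ through a small arc around the origin and extract the resonance contribution, producing a pointwise kernel bound of the form $t^{-3/2}(1+|x|)$. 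Relative to Faye--Holzer, the key modification is that these estimates must be carried out in $L^\infty$-type spaces with polynomial weight $(1+|x|)$ rather than in a weighted $L^1$, because $w$ is not integrable; the required regularity of the kernel is furnished by the uniform continuity up to second derivatives assumed in the theorem.

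The third step is an a priori estimate on the unweighted tail of $\tilde{A}$, which is the key new ingredient. The equation for $\tilde{A}$ in the moving frame reads $\partial_t \tilde{A} = \partial_x^2 \tilde{A} + 2\partial_x\tilde{A} + (1-2a-i)\tilde{A} - a\tilde{I} + \mathcal{O}(\tilde{A}^2,\tilde{A}\tilde{I})$, so the effective potential $V_A(x) = 1-2a(x)-i(x)$ tends to $1-\iminf < 0$ as $x \to -\infty$, while the source $-a\tilde{I}$ is exponentially small there thanks to $a(x) \to 0$. Writing $\tilde{A}$ via the Feynman--Kac formula for Brownian motion with drift $2$ and killing potential $V_A$, and treating the $\tilde{I}$-dependent contributions perturbatively through the step-two semigroup bound, I obtain a pointwise estimate $|\tilde{A}(t,x)| \leq C' e^{\mu_0 x}$ for $x \leq 0$, uniformly in $t \geq 0$; a parallel but simpler bound gives $|\tilde{I}(t,x)| \leq C'$ for $x \leq 0$. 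This supersedes the integrable-weight hypothesis of Faye--Holzer.

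Combining the two previous steps closes the nonlinear argument. In the weighted system, every quadratic term in $\mathcal{N}_w$ carries at least one factor of $\tilde{A}$, so at $-\infty$ the a priori bound of step three makes those terms integrable against the weight, e.g.\ $\tilde{A}^2/w \leq (C')^2 e^{(2\mu_0+\alpha)x}$ and $\tilde{A}\tilde{I}/w \lesssim e^{(\mu_0+\alpha)x}$. A Duhamel iteration driven by the pointwise semigroup estimate of step two then yields the announced $(1+t)^{-3/2}(1+|x|)w(x)$ decay for sufficiently small initial data. The main obstacle is exactly this tension between the unbounded weight and the quadratic nonlinearity at $-\infty$, which is absent from the earlier convective-stability literature; the Feynman--Kac a priori control of $\tilde{A}$ alone, made possible by the fact that the reaction has no $I^2$ contribution, is what defuses it.
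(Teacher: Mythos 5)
Your overall strategy is the same as the paper's: pass to weighted variables, cite the Faye--Holzer Green's function estimates (reproduced in the paper as Theorem~\ref{Thm:Faye_Green_bound}), obtain an exponential a priori bound on the left tail via Feynman--Kac, and close with a Duhamel bootstrap. However, two of your steps diverge from the paper in ways that create real gaps.

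First, your diagnosis in step two of what must change relative to Faye--Holzer is off. The paper does \emph{not} recast their kernel estimates from weighted-$L^1$ to weighted-$L^\infty$; it imports them verbatim, e.g.\ the long-time bound \eqref{Ineq:Green_L1}, $\int |\mathcal{G}(t,x,y)||h(y)|\,dy \leq C(1+|x|)(1+t)^{-3/2}\int(1+|y|)|h(y)|\,dy$. Those estimates are formulated for the conjugated operator $\mathcal{L}$ acting on the weighted variables $u=\tilde{A}/w,\,v=\tilde{I}/w$ and require no change. The non-integrability of $w$ bites only in the \emph{nonlinear} term: in weighted variables it is $\mathcal{N}(p)=wu(u+v)=\tilde{A}(u+v)$, so one must integrate $\int_{-\infty}^B |\mathcal{G}(t-s,x,y)|\,|\tilde{A}(s,y)||p(s,y)|\,dy$, and without pointwise control of $\tilde{A}$ this integrand need not be integrable at $y\to-\infty$. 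The a priori bound $|\tilde{A}(s,y)|\leq Ce^{\zeta y}$ for $y\leq B$ rescues exactly this integral (cf.\ the split between \eqref{Eq:Duhamel_neg_part} and \eqref{Eq:Duhamel_pos_part}), not the kernel estimate itself.

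Second, your step three applies Feynman--Kac to the perturbation $\tilde{A}$ with the linearized potential $V_A=1-2a-i$ and treats the cross-term $-a\tilde{I}$ and the quadratic terms as sources to be controlled by the semigroup estimate of step two. This is circular: the step-two Duhamel estimate needs the a priori tail bound to control the nonlinearity, while you are proposing to use the step-two estimate to establish the tail bound. The paper avoids this entirely by applying Feynman--Kac to the \emph{original nonnegative variables} $A$ and $I$ in Proposition~\ref{Prop:A_priori_estimates}: since $I$ has no loss term, a sub-solution argument shows $I(t,x)\geq 1+\delta$ for all $x\leq0$ as long as $I(s,0)\geq1+\delta$ for $s\leq t$; and once $I\geq1+\delta$, the full nonlinear $A$-equation gives $A_t \leq A_{xx}+cA_x-\delta A$ (dropping the negative term $-A(A+I-1)\leq-\delta A$), yielding the exponential bound $A(t,x)\leq Ce^{\zeta x}$ with no linearization and no appeal to the semigroup. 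The Feynman--Kac step is thus logically independent of step two. What remains is a single hypothesis, $|\tilde{I}(t,0)|\leq\delta_I$ for $t\geq 0$, which the paper verifies \emph{a posteriori} by the continuity argument closing the bootstrap (see \eqref{Eq:Main_Thm_I_Nec2} and the contradiction argument at the end of the proof). Your uniform-in-$t$ claims $|\tilde{A}(t,x)|\leq C'e^{\mu_0 x}$ and $|\tilde{I}(t,x)|\leq C'$ cannot be established up front; they are conclusions of the bootstrap, not inputs. You should restructure so that the Feynman--Kac estimate is conditional on control of $I(\cdot,0)$, and then verify that control as part of the fixed-point/continuity argument.
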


\subsection{Background}

One of the central questions regarding tissue growth is how individual cells react to their microscopic environment, and how this gives rise to distinct macroscopic structures. Mathematical models help to understand these processes, there exists a huge literature of works regarding organ formation, wound healing or tumor growth \cite{DAlessandro2021_Cell_Memory_Migration, Falco2021_Clinical_Cancer_Modeling_Glioblastoma, Ladoux_Cell_Migration_Plasticity,
Leroy_Tumor_Mechanical_2017, Mammoto2010_Organ_Development}. While most of the clinically relevant results are numerical, simplified reaction-diffusion systems provide a framework for rigorously analyzing the arising spatiotemporal patterns. The first study of this kind was the seminal analysis of the FKPP-equation in the 1930s \cite{Fisher_1937_Wave, KPP_1937_Wave}, describing the spreading of a fitter population, or "The Wave of Advance of Advantageous Genes" \cite{Fisher_1937_Wave}. Given System \eqref{Eq:Perturbed_PDE}, one obtains the original FKPP-equation when substituting $I \equiv 0$.

Hannezo et al. modeled the branching morphogenesi of glandular structures in organs via a stochastic system that is based on branching and annihilating random walks \cite{Hannezo_2017_Unifying, Cardy_Tauber_BARW}. Their numerical results suggest that this system self-organizes into spatially homogeneous structures, despite the fact that it is based on local mechanisms alone (i.e. there are no global guiding gradients). The authors also suggested the PDE \eqref{Eq:Perturbed_PDE} with $d=0$ to study the mean-field behavior of the stochastic system. Since all mechanisms in System \eqref{Eq:Perturbed_PDE} are purely local, it can be seen as a degenerate Keller-Segel system \cite{Keller_bacteria, Painter_Keller_Segel_SelfOrganization_2019, Perthame_Keller_Segel_2004}. For such systems, various organization phenomena are known, and can be proven in some cases \cite{Arumugam_2020_Keller-Segel}.

Simulations indicate that System \eqref{Eq:Perturbed_PDE} forms traveling solutions for a wide range of initial data, an example is depicted in Figure \ref{Diff_system_pics}. However, a global convergence result seems difficult to prove since the active particles form a non-monotone pulse, being accompanied by a monotone front of inactive particles of the very same speed. Thereby, the system evades any classical comparison principles. For a Lokta-Volterra system where each consecutive species spreads with a different speed, Ducrot et al. \cite{Ducrot2019_Consecutive_Fronts} could prove a global convergence result: since the fronts separate, each single one satisfies a comparison principle.

For systems without a comparison principle, typically only the stability against small perturbations can be proven. For noncritical fronts (those with a spectral gap of the linearized perturbation) and for bounded weights, Sattinger \cite{SATTINGER_stability} proved a general result in 1976. More recently, Ghazaryan et al. \cite{Ghazaryan_overview} proved several general results in the case of unbounded weights, even for mixed ODE-PDE models, but also under the assumption of a spectral gap. They also prefer the notion of \textit{convective stability} \cite{Ghazaryan_overview}, to emphasize the point-wise stability (whereas some physicists or biologists prefer to call the same phenomenon \textit{convective instability}, emphasizing the fact that the overall perturbation does not vanish or might even increase \cite{Sherratt_Convective_Instability}). For an a-priori control of the unbounded nonlinearity, Ghazaryan et al. use an energy estimate in $H^2$, which is not compatible with our point-wise result \cite{Ghazaryan_Convective_First_Work}.

The critical case - where the essential spectrum is only marginally stable - requires a different approach. Only for the case of a single reactant, the stability of critical FKPP-fronts has been proven for quite general reaction-terms, by Gallay in 1994 \cite{Gallay_1994_stability_critical}. The author proved that the algebraic speed of decay $t^{-3/2}$ is indeed optimal, we expect this also to be true in the present case. For more than one reactant, rigorous proofs are sparse.

Fay\'{e} and Holzer \cite{FAYE_lotke_volterra_critical_stability} could prove the local stability of the critical waves of a Lokta-Volterra system of two species. They generalize a technique for dealing with marginally stable spectra, which they introduced in 2018 for a system with a single reactant \cite{Faye2018_critical_FKPP}, to systems of two reactants. The underlying technique goes back to Zumbrun and Howard \cite{Zumbrun_Howard_Pointwise_Stability}, in principle it should be possible to generalize this result to systems of $n >2 $ reactants, but the notation becomes quite tedious. The computations are quite demanding: a precise analysis of the Evans-function in the neighborhood of the essential spectrum is required, making use of the Gap Lemma that goes back to Gardner and Zumbrun \cite{Gardner_Zumbrun_Gap_Lemma} - involving several changes of coordinates while keeping track of the resulting error terms. We are in the lucky situation that the linearized perturbation falls into the marginally stable category analyzed in \cite{FAYE_lotke_volterra_critical_stability}, at least if we use an unbounded weight as in Theorem \ref{Thm:large_time_decay}. I want to thank the authors for remarking that their proof is largely independent of the actual weight, which initiated the idea for this project.

We adapt the resulting semi-group estimates to the case where the weight is not integrable. This is why we need an a-priori bound on the tail of the PDE \eqref{Eq:Perturbed_PDE} in the moving frame. We can then estimate the decay of a linear super-solution via a Feynman-Kac formula, where we can stop the driving Brownian motion at an arbitrarily chosen point, e.g. $x=0$. As long as the perturbation at this specific point remains small, the super-solution decays exponentially. Our approach of dealing with the unbounded weight can be adapted to other systems with non-negative solutions, given that they are asymptotically monotone and that the nonlinearity essentially depends on a single reactant (which is the reason why we need the unbounded weight at all), check the introduction to Section \ref{Sec:Stability_big}.

\subsection{Structure of the paper}

In Section \ref{Sec:Stability_big}, we prove the local stability of the critical invading front. After introducing the necessary objects in Section \ref{Sec:Notation_Pert}, we present the central steps of the proof in Section \ref{Sec:Central_steps_estimate}. The technical details are then given in Section \ref{Sec:Prelim_Discussion_Space} and Section \ref{Sec:Long_time_estimates}. The Feynman-Kac formula, which controls the left tail of the PDE, is proven in Appendix \ref{Sec:A_priori_estimates}. In Appendix \ref{Sec:Numerical_spectrum}, we present a numerical evaluation of the non-negative discrete spectrum of the linearized perturbation.

In Section \ref{Sec:Existence_big_section}, we construct the traveling waves. Given the traveling waves for $d=0$ (Thm. \ref{Old_main_Theorem}), we apply perturbation techniques to track any finite segment of the waves for $d \neq 0$, see Section \ref{Sec:Asymptotics}. The singular perturbation (for passing continuously from $d=0$ to $d \sim 0$) is explained in Appendix \ref{Sec:Geom_sing_appl}. In Section \ref{Sec:Properties}, we analyze the phase space of the non-negative waves. We then extend the previous perturbation result and prove that a traveling wave persists locally under perturbation in $d$, up to its limit as $x \rightarrow + \infty$. Given any traveling wave, we prove an estimate of type $\iminf + \ipinf = 2 + \mathcal{O}(d)$ regarding the limits in Section \ref{Sec:Mapping_Limits}. We then prove the existence of non-negative traveling waves and invading fronts up to $d \sim 1$, in Section \ref{Sec:Existence_invading_front}.

% \tableofcontents

% \newpage
\section{The stability of the critical invading front}
\label{Sec:Stability_big}

\subsection{Notation}
\label{Sec:Notation_Pert}

In the following, we assume without further mentioning that $a(x), i(x)$ is a non-negative critical invading front as in Theorem \ref{Thm:existence_small_epsilon}. To begin with, we decompose any solution of the PDE \eqref{Eq:Perturbed_PDE} in the moving frame $x = z -ct$ into
\begin{align}
A(t,x ) = a(x) + \tilde{A}(t,x), \quad I(t,x) = i(x) + \tilde{I}(t,x).
\end{align}
Then, the perturbations $\tilde{A}(t,x)$ and $\tilde{I}(t,x)$ follow
\begin{align}
\begin{aligned}
\frac{\partial}{\partial t} \tilde{A} & = \frac{\partial^2}{\partial x^2} \tilde{A} + c \frac{\partial}{\partial x}\tilde{A} + \tilde{A}(1-2a-i) - \tilde{I}a - \tilde{A}(\tilde{A} + \tilde{I}),
	\\
\frac{\partial}{\partial t}\tilde{I} & = d \frac{\partial^2}{\partial x^2} \tilde{I} + c \frac{\partial}{\partial x} \tilde{I} + \tilde{A}(2a+i+r) + \tilde{I}a +\tilde{A}(\tilde{A} + \tilde{I}). \label{Eq:Non_Linear_PDE}
\end{aligned}
\end{align}

For $\tilde{A},\tilde{I}$ that solve the perturbation Eq. \eqref{Eq:Non_Linear_PDE} and given a strictly positive weight $w(x)$, we define the weighted perturbations $u: = \tilde{A} / w, v: = \tilde{I} /w$. If $w$ is twice differentiable with derivatives $w',w''$, they solve
\begin{align}
\begin{aligned}
\frac{\partial}{\partial t} u &=  \frac{\partial^2}{\partial x^2} u + \frac{\partial}{\partial x} u \cdot \big ( c + 2\frac{w'}{w} \big ) + u \cdot \big (
c \frac{w'}{w} + \frac{ w''}{w} \big ) \\
& \qquad \qquad + u(  1-2a-i  ) - va -wu(u+v), \\
\frac{\partial}{\partial t} v &= d  \frac{\partial^2}{\partial x^2} v + \frac{\partial}{\partial x} v \cdot \big ( c + 2d\frac{w'}{w} \big ) + v \cdot \big (
c \frac{w'}{w} + d \frac{ w''}{w} \big ) \\
& \qquad \qquad + u(  2a+i+r) +va +wu(u+v). \label{Eq:Perturbed_Oparator_corrected}
\end{aligned}
\end{align}
We summarize this as
\begin{align}
\frac{\partial}{ \partial t}(u,v) &= \mathcal{L} (u,v) + N(u,v),
\intertext{with the linear part}
\mathcal{L}u &:= \frac{\partial^2}{\partial x^2} u + \frac{\partial}{\partial x} u \cdot \big ( c + 2\frac{w'}{w} \big ) + u \cdot \big (
c \frac{w'}{w} + \frac{ w''}{w} \big )
	\nonumber \\
 & \hspace{0.5cm} + u(  1-2a-i  ) -va,
	\nonumber \\
\mathcal{L}v &:= d \frac{\partial^2}{\partial x^2} v + \frac{\partial}{\partial x} v \cdot \big ( c + 2d\frac{w'}{w} \big ) + v \cdot \big (
c \frac{w'}{w} + d \frac{ w''}{w} \big )
		\label{Eq:Linear_Operator} \\
& \hspace{0.5cm} + u(  2a+i+r) +va, \nonumber
\intertext{and the nonlinear part}
N u &:= -wu(u+v), \qquad
N v := wu(u+v).
\end{align}
For compactness of presentation, we introduce the vectorial notation
\begin{align}
p(t,x) := \begin{pmatrix}
u(t,x) \\
v(t,x)
\end{pmatrix}, \,
N(p)(t,x) := w(x)u(t,x) \cdot
\begin{pmatrix}
- u(t,x) - v(t,x) \\
 u(t,x) + v(t,x) 
\end{pmatrix}. \label{Def:Notation_p}
\end{align}
Lastly, we define as $\mathcal{G}(t,x,y)$ the Kernel of the linear Eq. \eqref{Eq:Linear_Operator}:
\begin{align}
\mathcal{G}(t,x,y) := \begin{pmatrix}
\mathcal{G}_{11}(t,x,y) & \mathcal{G}_{12}(t,x,y)  \\ \mathcal{G}_{21}(t,x,y) & \mathcal{G}_{22}(t,x,y)
\end{pmatrix},
\end{align}
acting on a space to be defined later. In this compact notation, we will estimate the evolution of the weighted perturbations with a Duhamel principle:
\begin{align}
p(t,x) = \int_{ \mathds{R}} \mathcal{G}(t,x,y) p(0,y) \, dy +
\int_0^t ds \int_\mathds{R} \mathcal{G}(t-s,x,y) N (p)(s,y) \, dy. \label{Eq:Duhamels_formula}
\end{align}
Eq. \eqref{Eq:Duhamels_formula} holds if the nonlinearity is locally Lipschitz, see e.g. Pazy p. 185 ff \cite{Pazy_Operators_and_PDE}.

\subsection{Central steps}
\label{Sec:Central_steps_estimate}

For $d>0$ and in a function space where the weight grows exponentially as $x \rightarrow - \infty$, the operator $\mathcal{L}$ is sectorial \cite{Henry_Geometric_Parabolic, Alexander_Gardner_Evans_Invariance, Gardner_Zumbrun_Gap_Lemma}, see Section \ref{Sec:Prelim_Discussion_Space} for an analysis of its spectrum: the spectrum is contained in the strictly negative half-plane, with the exception of a single half-line that touches the origin. In this setting, we can use a result of Fay\'{e} and Holzer \cite{FAYE_lotke_volterra_critical_stability} to estimate the long-time behavior of $\mathcal{G}$, check Section \ref{Sec:Long_time_estimates}. Roughly, this result states that $\mathcal{G}$ decays like $t^{-3/2}$ for large $t$. Then, for estimating the evolution of the full nonlinear system via Eq. \eqref{Eq:Duhamels_formula}, it is crucial to control the nonlinear integral
\begin{align}
 \int_{\mathds{R}} w (x) u(t,x) \cdot \big ( u(t,x) + v(t,x) \big ) \, dx . \label{Eq:Integral_needed}
\end{align}

We treat the cases $x \leq 0$ and $x \geq 0$ separately. Assuming that $w(x)$ vanishes exponentially fast as $z \rightarrow + \infty$, the front satisfies the classical estimate
\begin{align}
\begin{aligned}
& \Big \vert \int_{0}^\infty w (x) u(t,x) \cdot \big ( u(t,x) + v(t,x) \big ) \, dx \Big \vert
\\
& \leq \sup_{x \geq 0} \Big \{ \big (  \vert u(t,x) \vert  +  \vert v(t,x) \vert  \big)^2 \Big \} \cdot \int_{0}^\infty w (x) \, dx
\\
& \leq C \cdot \sup_{x \geq 0} \big \{  \vert \vert p(t,x) \vert \vert_1 ^2 \big \}. \label{Ineq:Front_weight_est}
\end{aligned}
\end{align}

Since $w(x)$ grows exponentially as $x \rightarrow - \infty$, we take a different approach for $x \leq 0$. This is where we need the a-priori estimate:

\begin{proposition} \label{Prop:A_priori_estimates}
Let $A(t,x),I(t,x)$ be a non-negative solution of the PDE \eqref{Eq:Perturbed_PDE} in the moving frame $x = z -ct$, for a speed $c > 0$. Assume that there exist constants $K, \delta, \mu_0 >0$, such that the initial data fulfill
\begin{align}
I(0, x) & \geq 1 + \delta && \qquad \forall x \leq 0, 
\nonumber \\
A(0,x) & \leq K e^{\mu_0 x} &&\qquad \forall x \leq 0, \\
A(0,x) + I(0,x) & \leq K &&\qquad \forall x \in \mathds{R}. \nonumber
\intertext{Moreover, assume that for some time $t \in (0, \infty]$, it holds that}
I(s, x = 0)  & \geq 1 + \delta > 1 && \qquad \forall s \in [0,t).
\end{align}
Then, there exist $C, \zeta >0$ that are independent of $t$, such that:
\begin{align}
\forall s \in [0,t), \, x \leq 0: \quad & i) & \quad I(s,x) \geq 1 + \delta, \label{Bound:A_pr_I_min} \\ 
& ii) & \quad A(s,x) \leq C e^{\zeta x}. \label{Bound:A_pr_A_max}
\end{align}
\end{proposition}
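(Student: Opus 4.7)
The strategy is to first establish (i) by a Feynman-Kac / maximum-principle argument on the $I$-equation, and then to use (i) to turn the $A$-equation into a linear differential inequality with a strictly negative potential $-\delta$ on $\{x \leq 0\}$, whose exponential decay at $-\infty$ is captured by an explicit Feynman-Kac representation with killing.

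For (i), since $A, r \geq 0$, the function $J := I-(1+\delta)$ satisfies $\partial_s J \geq d\,\partial_{xx} J + c\,\partial_x J$ on $(-\infty,0]\times[0,t)$, with $J \geq 0$ on the parabolic boundary by hypothesis and $J \geq -(1+\delta)$ bounded below because $I \geq 0$. Letting $Y_r := x + \sqrt{2d}\,W_r + cr$ denote the diffusion with generator $d\,\partial_{xx} + c\,\partial_x$ and $\tau_0^d$ its first hitting time of $0$, Itô's formula applied to $J(s-r, Y_r)$ stopped at $s\wedge\tau_0^d$ yields
\[
  J(s, x) \geq \mathbb{E}_x\bigl[J(0,Y_s)\,\mathbf{1}_{\tau_0^d > s}\bigr] + \mathbb{E}_x\bigl[J(s-\tau_0^d, 0)\,\mathbf{1}_{\tau_0^d \leq s}\bigr] \geq 0,
\]
which is exactly (i).

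For (ii), observe first that $A(1-A-I) \leq A(1-A)$ (since $I \geq 0$), so $A$ is a subsolution of the scalar FKPP equation on the whole line; comparison with the constant supersolution $K' := \max\{K,1\}$ gives $A \leq K'$ everywhere, and in particular the time-uniform boundary bound $A(\cdot,0) \leq K'$. Combined with (i), on $(-\infty,0]\times[0,t)$ the $A$-equation reduces to
\[
  \partial_s A \leq \partial_{xx} A + c\,\partial_x A - \delta A.
\]
Let $X_r := x + \sqrt{2}\,W_r + cr$ be the diffusion with generator $\partial_{xx} + c\,\partial_x$ and $\tau_0$ its first hitting time of $0$. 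Itô's formula applied to $e^{-\delta r} A(s-r, X_r)$ on $[0, s\wedge\tau_0]$ produces the Feynman-Kac inequality
\[
  A(s,x) \leq \mathbb{E}_x\bigl[e^{-\delta s} A(0, X_s)\,\mathbf{1}_{\tau_0 > s}\bigr] + \mathbb{E}_x\bigl[e^{-\delta \tau_0} A(s-\tau_0, 0)\,\mathbf{1}_{\tau_0 \leq s}\bigr].
\]

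Fix $\zeta_+ := \tfrac{1}{2}(-c + \sqrt{c^2+4\delta})$ and $\zeta := \min\{\mu_0,\zeta_+\} > 0$, so that $\zeta^2 + c\zeta - \delta \leq 0$. On $\{\tau_0 > s\}$ the trajectory stays in $(-\infty,0)$, hence $A(0, X_s) \leq K e^{\mu_0 X_s} \leq K e^{\zeta X_s}$; the Gaussian identity $\mathbb{E}_x[e^{\zeta X_s}] = e^{\zeta x + s(\zeta^2+c\zeta)}$ multiplied by $e^{-\delta s}$ bounds the first term by $K e^{\zeta x}$. The second term combines $A(\cdot,0) \leq K'$ with the classical Laplace transform $\mathbb{E}_x[e^{-\delta\tau_0}] = e^{\zeta_+ x} \leq e^{\zeta x}$ to give $K' e^{\zeta x}$. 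Summing yields (ii) with $C := K + K'$ and $\zeta$ as above, both independent of $t$. The main delicate step is to make the Itô/Feynman-Kac bound rigorous on the unbounded half-line: one localizes $X$ by an additional stopping time at $x = -N$, derives the bound on the truncated domain, and lets $N \to \infty$ using $A \leq K'$ together with the integrability of the exponential weights; a minor technicality is that Itô's formula requires $A \in C^{1,2}$, which follows from parabolic smoothing for $s > 0$ after a routine approximation of the initial data.
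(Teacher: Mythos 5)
Your proof is correct and follows the same overall strategy as the paper: establish $I \geq 1+\delta$ on $\{x\leq 0\}$ by a comparison/Feynman--Kac argument, then use this to write the $A$-equation as a linear differential inequality with potential $-\delta$ on the half-line, represented via the driving diffusion stopped at the origin and split on $\{\tau_0 > s\}$ versus $\{\tau_0 \leq s\}$. Where you diverge is in how you estimate the two resulting terms. The paper uses the explicit density of the hitting time $T_0$ (their Eq.~\eqref{Eq:T0_distr}) and, for each of the two terms, carries out a case analysis on small versus large time, yielding four separate exponential bounds that are then assembled. You instead observe that a single choice $\zeta = \min\{\mu_0,\zeta_+\}$, with $\zeta_+$ the positive root of $\zeta^2 + c\zeta - \delta = 0$, handles both terms at once: the Gaussian moment-generating function identity $\mathbb{E}_x[e^{\zeta X_s}] = e^{\zeta x + s(\zeta^2 + c\zeta)}$ pairs with the damping $e^{-\delta s}$ to bound the $\{\tau_0>s\}$ term uniformly in $s$, and the classical Laplace transform $\mathbb{E}_x[e^{-\delta\tau_0}] = e^{\zeta_+ x}$ gives the $\{\tau_0 \leq s\}$ term directly, with no need for the hitting-time density or the small/large-time dichotomy. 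This is a genuinely cleaner route to the same conclusion: it exposes the natural exponent $\zeta_+$ (the characteristic root of $u'' + cu' - \delta u = 0$ that decays at $-\infty$) and avoids the elementary but tedious integral estimates in the paper's Inequalities \eqref{Ineq:A_bound_1}--\eqref{Ineq:A_bound_4}. Your remarks on the technicalities (localizing at $x=-N$ and using parabolic smoothing to justify Itô) align with what the paper leaves implicit in its Lemma~\ref{Lem:Feynman_Kac}.
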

The proof and the used Feynman-Kac formula are standard, given in Appendix \ref{Sec:A_priori_estimates}. The way we apply this result for controlling the perturbations is new. For the wave it holds that $\lim_{x \rightarrow - \infty}i(x) >1$, where $i(x)$ is monotone. We first shift the wave such that $i(0) = 1 + 2 \delta > 1$. Then, it suffices to control
\begin{align}
\vert \tilde{I}(t,x=0) \vert \leq \delta \qquad \forall t \geq 0, \label{Eq:Prelim_Explanation_x0}
\end{align}
and in view of Proposition \ref{Prop:A_priori_estimates}, there exist constants $C, \zeta >0$ such that
\begin{align}
\vert \tilde{A}(t,x) \vert \leq A(t,x) &\leq C e^{\zeta x} \qquad \forall x \leq 0, \, t \geq 0,
\end{align}
since $A \geq 0$. We re-substitute $w u = \tilde{A}$ to estimate
\begin{align}
\begin{aligned}
& \Big \vert \int_{-\infty}^0 w (x) u(t,x) \cdot \big ( u(t,x) + v(t,x) \big ) \Big \vert \, dx
\\
& \leq \int_{-\infty}^0 \vert \tilde{A}(t,x) \vert \cdot \vert u(t,x) + v(t,x) \vert \, dx
\\
& \leq \sup_{x \leq 0} \Big \{  \vert u(t,x) \vert  +  \vert v(t,x) \vert   \Big \} \cdot \int_{-\infty}^0 \vert \tilde{A}(t,x) \vert \, dx
\\
& \leq C \cdot \sup_{x \leq 0} \big \{  \vert \vert p(t,x) \vert \vert_1   \big \}.
\end{aligned} \label{Ineq:Back_weight_est}
\end{align}

Given both Estimates \eqref{Ineq:Front_weight_est} and \eqref{Ineq:Back_weight_est}, the resulting semi-group estimates are quite standard, though a bit lengthy due to the different cases that need to be dealt with, presented in Section \ref{Sec:Long_time_estimates}.

\subsection{An appropriate function space}
\label{Sec:Prelim_Discussion_Space}

Here and in the following, let $\mathcal{L}$ \eqref{Eq:Linear_Operator} be a densely defined operator
\begin{align}
\mathcal{L}: H^2(\mathds{R}) \times H^2(\mathds{R})  \rightarrow L^2(\mathds{R}) \times L^2(\mathds{R}), \label{Def:Spaces}
\end{align}
where $H^2$ is the $L^2$ Sobolev space. For $\lambda \in \mathds{C}$, consider the Eigenvalue problem $\lambda \cdot (u,v) = \mathcal{L} (u,v)$. With the help of the auxiliary variables
\begin{align}
U := \begin{pmatrix} u \\ u' \\ v \\ v' \end{pmatrix},
\end{align}
the equation $\lambda \cdot (u,v) = \mathcal{L} (u,v)$ can be rewritten as an equivalent linear system of ODEs:
\begin{align}
U' = M(x,\lambda) \cdot U, \label{Def:Operator_as_ODE}
\end{align}
for a matrix $M(x,\lambda)$ given by
\begin{align}
M(x,\lambda) & := \begin{pmatrix}
0 & 1 & 0 & 0 \\
\lambda + \xi_u(x) & -(c + 2\frac{w'(x)}{w(x)}) & a(x) & 0 \\
0 & 0 & 0 & 1  \\
-\frac{2a(x)+i(x)+r}{d} & 0 & \frac{\lambda + \xi_v(x)}{d} & - \frac{c+2d \frac{w'(x)}{w(x)}  }{d}
\end{pmatrix}, \label{Def:Matrix_general_case} \\
\xi_u(x) & := 2a(x)+i(x)-1 -c \frac{w'(x)}{w} - \frac{w''(x)}{w(x)}, \\
\xi_v(x) & := -a(x) - c \frac{w'(x)}{w(x)} - d \frac{w''(x)}{w(x)}. 
\end{align}

Given a pair of exponents $\alpha_\pm  >0$, we fix a smooth weight $w(x) >0$, subject to the conditions
\begin{align}
w(x) := \begin{cases}
e^{- \alpha_+ x} & \quad x \geq 1, \\
e^{- \alpha_- x} & \quad x \leq -1, \\
1  & \quad x = 0. \label{Def:Weight}
\end{cases}
\end{align}
Note that this weight is bounded for $x \geq 0$, and unbounded for $x \leq 0$.

\begin{figure}[h]
	\vspace{0.3cm}
 		\centering
 		\begin{minipage}[c]{0.40\textwidth}
\begin{picture}(100,100)
\put(0,0){\includegraphics[width=\textwidth]{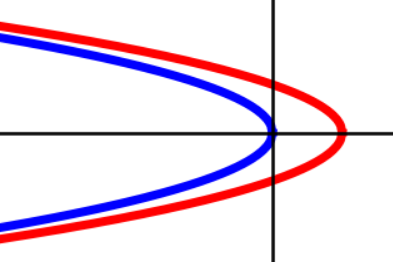}}
	\put(20,53){\scalebox{1.5}{$\mathfrak{Re}$}}
	\put(103,95){\scalebox{1.5}{$\mathfrak{Im}$}}
	\put(120, 65){{\scalebox{2}{${\color{red}\Sigma_\nu}$}}}
	\put(120, 20){{\scalebox{2}{${\color{blue}\Sigma_\psi}$}}}
\end{picture} 	
 		\end{minipage}
 		\hspace{0.5cm}
 		\begin{minipage}[c]{0.40\textwidth}
\begin{picture}(100,100)
 		\put(0,0){\includegraphics[width=\textwidth]{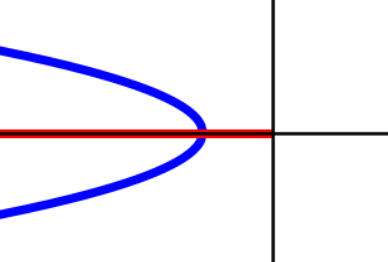}}
	\put(20,53){\scalebox{1.5}{$\mathfrak{Re}$}}
	\put(104,95){\scalebox{1.5}{$\mathfrak{Im}$}}
	\put(70, 65){{\scalebox{2}{${\color{red}\Sigma_\nu}$}}}
	\put(70, 20){{\scalebox{2}{${\color{blue}\Sigma_\psi}$}}}
\end{picture} 	
 		\end{minipage}
 	\vspace{0.4cm}
	\caption{The two critical parts of the essential spectrum of $\mathcal{L}$ as in Proposition \ref{Prop:Ess_Stability}, defined in \eqref{Eq:Ess_spec_line_1} and \eqref{Eq:Ess_spec_parabola2}. On the left, the essential spectrum is unstable in an unweighted space. On the right, the essential spectrum is stabilized via an exponential weight, with Exponents \eqref{Ineq:Stabil_upper_bound}.}\label{Fig:Spectrum_Shift}
\end{figure}

\begin{proposition} \label{Prop:Ess_Stability}
For $d \in (0,1)$ and exponents
\begin{align}
\alpha_- \in ( 0, 1), \quad \alpha_+ = 1, \label{Ineq:Stabil_upper_bound}
\end{align}
consider an exponential weight $w(x)$ as in \eqref{Def:Weight}. Then, for a critical invading front, the essential spectrum of the linear operator $\mathcal{L}$ \eqref{Def:Operator_as_ODE},
\begin{align}
\mathcal{L}: H^2(\mathds{R}) \times H^2(\mathds{R})  \rightarrow L^2(\mathds{R}) \times L^2(\mathds{R}),
\end{align}
is bounded to the right by the union of three parabolas \eqref{Eq:Ess_spec_parabola1}, \eqref{Eq:Ess_spec_parabola2} and \eqref{Eq:Ess_spec_parabola3}, that lie in the strict negative half-plane, except for the half-line
\begin{align}
\Sigma_{\nu} &= \big \{
\lambda \in \mathds{C} \big \vert \, \mathfrak{Re}\lambda \leq 0, \mathfrak{Im} \lambda = 0
\big \}.
\end{align}
\end{proposition}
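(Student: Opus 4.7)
The plan is to invoke the classical Palmer / Henry / Sandstede characterization of the essential spectrum for the asymptotically hyperbolic first-order ODE system $U' = M(x,\lambda)U$: the Fredholm boundary of $\mathcal{L}-\lambda$ is exactly the set of $\lambda \in \mathds{C}$ for which one of the asymptotic matrices $M_{\pm\infty}(\lambda) := \lim_{x\to\pm\infty}M(x,\lambda)$ admits a purely imaginary eigenvalue, and the essential spectrum lies to the right of these curves. The task therefore reduces to computing the two asymptotic matrices and solving the resulting dispersion relations.

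First, I would compute $M_{\pm\infty}(\lambda)$ explicitly. By Theorem \ref{Thm:existence_small_epsilon}, $a(x)\to 0$ at both ends, $i(x)\to i_{-\infty}$ as $x\to-\infty$, and $i(x)\to 0$ as $x\to+\infty$; by construction $w'/w = -\alpha_\pm$ and $w''/w = \alpha_\pm^2$ for $|x|\geq 1$. The decisive observation is that the $(2,3)$-entry of $M$ equals $a(x)$ and hence vanishes in both limits, so each $M_{\pm\infty}(\lambda)$ is block lower-triangular. Its spectrum is the union of the spectra of two independent $2\times 2$ diagonal blocks, one governing the $u$-component and one the $v$-component; the coupling $M_{41}$ influences eigenfunctions but not eigenvalues.

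Second, for each of the four $2\times 2$ blocks I would set $\nu = ik$, $k\in\mathds{R}$, and solve the scalar quadratic for $\lambda$. This produces four parabolas in $\mathds{C}$. Explicitly, the $u$-block at $+\infty$ yields
\begin{align*}
\Sigma_u^+(k) \,=\, -k^2 + ik(c-2\alpha_+) + 1 - c\alpha_+ + \alpha_+^2,
\end{align*}
and the analogous calculations for the $v$-block at $+\infty$ and for the $u,v$-blocks at $-\infty$ yield three further parabolas in closed form, each a quadratic function of $k$ with explicit vertex.

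Third, I would specialise to $c=2$, $\alpha_+=1$, $\alpha_-\in(0,1)$, $d\in(0,1)$ and check the location of each parabola. The $u$-block at $+\infty$ collapses to $\Sigma_u^+(k)=-k^2\leq 0$, contributing precisely the half-line $\Sigma_\nu$. The $v$-block at $+\infty$ has maximum real part $d-2<-1$. The $u$-block at $-\infty$ has real part bounded by $(1-\alpha_-)^2-i_{-\infty}<0$, using $i_{-\infty}>1$ from Theorem \ref{Thm:existence_small_epsilon} and $(1-\alpha_-)^2<1$. The $v$-block at $-\infty$ has real part bounded by $-\alpha_-(2-d\alpha_-)<0$, since $d\alpha_-<1$. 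Discarding one parabola that is enclosed in the region bounded by another leaves the three parabolas of the claim, and the essential spectrum is confined to their left closure, touching the imaginary axis only along $\Sigma_\nu$. The main work is purely the bookkeeping of signs across the four dispersion relations and the verification of strict negativity after substituting $c=2$ and the weight exponents; there is no conceptual obstacle, because the vanishing of $a(x)$ at infinity decouples the $u$- and $v$-asymptotic analyses.
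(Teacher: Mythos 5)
Your proof takes essentially the same route as the paper: compute the limit matrices $M(\pm\infty,\lambda)$, observe that $a(\pm\infty)=0$ makes them block lower-triangular so that the $u$- and $v$-blocks decouple, derive the four dispersion curves from the two-by-two characteristic polynomials, and then substitute $c=2$, $\alpha_+=1$, $\alpha_-\in(0,1)$, $d\in(0,1)$ to verify that the vertices are strictly negative except for the degenerate $u$-block at $+\infty$, which collapses to the half-line $\Sigma_\nu$. Two small slips worth correcting: your opening sentence should say the essential spectrum is bounded \emph{to the right} by (equivalently, lies to the \emph{left} of) these curves, not ``lies to the right of these curves'' (your concluding sentence already has the correct orientation); and there is nothing to discard at the end -- the four dispersion curves you computed are precisely one degenerate half-line plus the three parabolas named in the claim, so the tally already matches the statement.
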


The proof of Proposition \ref{Prop:Ess_Stability} will be given below. The parabola which we need to stabilize with an appropriate weight and the set $\Sigma_\eta$ are depicted in Figure \ref{Fig:Spectrum_Shift}. Precisely this type of essential spectrum is considered in \cite{FAYE_lotke_volterra_critical_stability}, and we can use their estimates regarding the long-time behavior of $\mathcal{L}$. So far, we could not analyze the discrete spectrum of $\mathcal{L}$ analytically, and need to make the following critical\\

\textbf{Assumption on the discrete spectrum:} In the Setting of the above Proposition \ref{Prop:Ess_Stability}, let $\Sigma_{d}$ be the discrete spectrum of $\mathcal{L}$. Assume that there exist $\delta_0, \delta_1 >0$, such that for the region
\begin{align}
& \Omega := \Big \{ 
\lambda \in \mathds{C} \big \vert \, \mathfrak{Re} \lambda \geq - \delta_0 - \delta_1 \cdot  \vert \mathfrak{Im} \lambda  \vert
\Big \} \label{Eq:Omega_Set}
\intertext{it holds that}
& \Omega \cap \Sigma_{d} = \emptyset.
\label{Ass:Point_spectrum}
\end{align}
We verify this assumption numerically, presented in Appendix \ref{Sec:Numerical_spectrum}. Note that we require $0 \notin \Sigma_d$. For an invading front $a(x), i(x)$, it is easy to see that the symbolic equation $\mathcal{L}(a',i') = 0$ holds. However, the critical front behaves like $x e^{-x}$ as $x \rightarrow + \infty$, so its derivative is not an element of the considered weighted space.

\begin{proof}[Proof of Proposition \ref{Prop:Ess_Stability}]

For any non-negative traveling wave, $a(x)$ vanishes as $x \rightarrow \pm \infty$, and $i(x)$ converges. Moreover, both converge exponentially fast at both ends. The same applies to the matrices $M(x,\lambda)$, which we call exponentially localized. In this setting it is a well-known result \cite{SANDSTEDE_stability_traveling, Gardner_Zumbrun_Gap_Lemma} that the essential spectrum of $\mathcal{L}: H^2 \rightarrow L^2$ is bounded to the right by the spectrum of the limit matrices $M(\pm \infty, \lambda)$: those values of $\lambda \in \mathds{C}$, for which one of $M(\pm \infty, \lambda)$ is defect. Denoting as $\mathcal{E}(M)$ the Eigenvalues of a matrix $M$, the essential spectrum of $\mathcal{L}$ is thus bounded by the set
\begin{align}
\begin{aligned}
M_{Ess} =
& \, \Big \{ \lambda \in \mathds{C} \Big \vert \, \exists \, \mu \in \mathcal{E}\big( M(+\infty, \lambda) \big): \, \mathfrak{Re}(\mu) = 0 \Big \}
 \\ \cup
& \, \Big \{ \lambda \in \mathds{C} \Big \vert \, \exists \, \mu \in \mathcal{E}\big( M(-\infty, \lambda) \big): \, \mathfrak{Re}(\mu) = 0 \Big \}.
\end{aligned}
\end{align}
The region to the right of this set is referred to as \textit{region of consistent splitting}, since the dimension of the unstable spectrum of $M(-\infty, \lambda)$ and the dimension of the stable spectrum of $M(+\infty, \lambda)$ add up to the full dimension of the system. In the following, we focus on the critical invading fronts, but in principle, technique and result can be applied to the other critical traveling waves of the system (with minimal speed depending on the chosen limits). 

For $d\in (0,1)$, consider now an invading front $a(x), i(x)$ with speed $c = 2$. We first analyze the matrix $M(+ \infty, \lambda)$. Given that $\ipinf =0$ and with Weight \eqref{Def:Weight}, the limit of $M(x,\lambda)$ \eqref{Def:Matrix_general_case} at $x = + \infty$ is given by
\begin{align}
\begin{pmatrix}
0 & 1 & 0 & 0 \\
\lambda  -1 + 2 \alpha_+ - \alpha_+^2  & -2 +2 \alpha_+ & 0 & 0 \\
0 & 0 & 0 & 1  \\
-\frac{r}{d} & 0 & \frac{\lambda + 2 \alpha_+ - d\alpha_+^2 }{d} & - \frac{2 - 2d \alpha_+  }{d}
\end{pmatrix}.
\end{align}
This matrix has two pairs of Eigenvalues:
\begin{align}
\nu_\pm(\lambda, \alpha_+) & =  -1 \pm \sqrt{\ipinf + \lambda} + \alpha_+, \\
\eta_\pm(\lambda, \alpha_+) &= \frac{1}{d} \big ( - 1 \pm \sqrt{d\lambda +1} \big ) + \alpha_+.
\end{align}
We define the set
\begin{align}
\Sigma_{\nu} &:= \big \{ 
\lambda \in \mathds{C} \big  \vert  \, \mathfrak{Re} \big ( \nu_+(\lambda) \big ) = 0
\big \}. \label{Eq:Ess_spec_line_1}
\end{align}
For $w \equiv 1$ (if $\alpha_+ = 0$), the set $\Sigma_{\nu}$ is a parabola that goes into the right half-plane, as depicted in Figure \ref{Fig:Spectrum_Shift}. In order to stabilize it, we need to operate in a space with weight
\begin{align}
w(x) = e^{ - \frac{c}{2} x} = e^{-x}, \qquad \forall x \geq 1.
\end{align}
This is the only possible choice for stabilizing the front of a critical FKPP-wave, a classical result, check \cite{SATTINGER_stability, Faye2018_critical_FKPP}. For $\alpha_+ = 1$, the Eigenvalues of $M(+ \infty, \lambda)$ are then given by
\begin{align}
\nu_\pm(\lambda,1) = \pm \sqrt{\lambda}, \quad \eta_\pm(\lambda,1) = \frac{1}{d} \Big ( 
d-1 \pm \sqrt{ d \lambda +1 }
\Big ).
\end{align}
The pair $\nu_\pm$ creates an essential spectrum that lies on the negative real axis and goes up to the origin:
\begin{align}
\Sigma_{\nu} &= \Big \{
\lambda \in \mathds{C} \big \vert \, \mathfrak{Re}\lambda \leq 0, \mathfrak{Im} \lambda = 0
\Big \}. \label{Eq:Ess_spec_line}
\intertext{Since $d \in (0,1)$, the Eigenvalue $\eta_-(\lambda)$ does not touch the imaginary axis, only the Eigenvalue $\eta_+(\lambda)$ crosses the imaginary axis along a parabola that lies in the strict negative half-plane:}
\Sigma_{\eta} &= \Big \{ \lambda \in \mathds{C} \big \vert \, \mathfrak{Re}\lambda = \frac{1}{d} \big ( 2 (1-d)^2 -1 - \vert d\lambda+1 \vert^2 \big )
\Big \}.
\label{Eq:Ess_spec_parabola1}
\end{align}

We now consider the other limit, $M(- \infty, \lambda)$, given by
\begin{align}
\begin{pmatrix}
0 & 1 & 0 & 0 \\
\lambda + \iminf -1 + 2 \alpha_- - \alpha_-^2  & -2 +2 \alpha_- & 0 & 0 \\
0 & 0 & 0 & 1  \\
-\frac{r+\iminf}{d} & 0 & \frac{\lambda + 2 \alpha_- - d\alpha_-^2 }{d} & - \frac{2 - 2d \alpha_-  }{d}
\end{pmatrix},
\end{align}
for some $\iminf \in (1,2)$, and Eigenvalues
\begin{align}
\sigma_\pm(\lambda, \alpha_-) & =  -1 \pm \sqrt{\iminf + \lambda} + \alpha_-, \\
\psi_\pm(\lambda, \alpha_-) & = \frac{1}{d} \big ( - 1 \pm \sqrt{d\lambda +1} \big ) + \alpha_-.
\end{align}
In order to stabilize $\psi_+$ away from the origin, we can pick any $\alpha_- >0$, leading to the parabola
\begin{align}
\Sigma_{\psi} &:= \Big \{
\lambda \in \mathds{C} \big \vert \, \mathfrak{Re}\lambda = \frac{1}{d} \big ( 2(1-d\alpha_-)^2 - 1 - \vert \lambda+1 \vert \big )
\Big \}. \label{Eq:Ess_spec_parabola2}
\intertext{As long as $\alpha_- < 2 / d$, the Eigenvalue $\psi_-$ does not touch the imaginary axis. The Eigenvalue $\sigma_+$ crosses the imaginary axis along a parabola that lies in the strict negative half-plane:}
\Sigma_{\sigma} &:= \Big \{
\lambda \in \mathds{C} \big \vert \, \mathfrak{Re}\lambda = - \iminf - 2(1-\alpha_-)^2 - \vert \iminf + \lambda \vert
\Big \}. \label{Eq:Ess_spec_parabola3}
\end{align}
Lastly, we check that we do not overstabilize $\sigma_-$. It holds that
\begin{align}
\mathfrak{Re} \big ( \sigma_-(\lambda) \big ) = 0 \, \Leftrightarrow \, \alpha_- -1 =  \mathfrak{Re} \big ( +\sqrt{ \iminf + \lambda } \big ).
\end{align}
The set $\{ \mathfrak{Re} \big ( \sigma_-(\lambda) \big ) = 0 \}$ is obviously empty as long as
\begin{align}
\alpha_- < 1.
\end{align}
\end{proof}

\subsection{Estimating the long-time behavior}
\label{Sec:Long_time_estimates}

We prove the stability of the critical invading front using estimates by Fay\'{e} and Holzer \cite{FAYE_lotke_volterra_critical_stability}. By controlling the behavior of the Evans function (and thereby of the point-wise Green's function) in the neighborhood of the essential spectrum, they prove the following decay of the temporal Green's function:

\begin{theorem}[cf. Prop. 4.1 and Lemma 5.1 in \cite{FAYE_lotke_volterra_critical_stability}]  \label{Thm:Faye_Green_bound}
For $\mathcal{L}$ with essential spectrum as in Proposition \ref{Prop:Ess_Stability}, and given that the discrete spectrum of $\mathcal{L}$ fulfills Assumption \eqref{Ass:Point_spectrum}, the temporal Green's function $\mathcal{G}(t,x,y)$ for $\partial_t (u,v) = \mathcal{L}(u,v)$ satisfies the following estimates: there exists constants $C, \kappa > 0$, such that for all pairs of indices $i,j \in \{1,2\}$ and for all $x,y \in \mathds{R}$:
\begin{align}
\big \vert \mathcal{G}_{ij}(t,x,y) \big \vert & \leq C \frac{1}{t^{1/2}} e^{- \frac{ \vert x-y \vert ^2}{\kappa t} } &&\quad \forall t < 1, \label{Ineq:Green_pointwise} \\
\int_{\mathds{R}} \big \vert\mathcal{G}_{ij}(t,x,y) \big \vert \cdot \vert h(y) \vert \, dy  & \leq C \cdot \vert \vert h \vert \vert_\infty &&\quad \forall t < 1. \label{Ineq:Green_infty}
\end{align}
Moreover, for all $t\geq 1$ and $x \in \mathds{R}$:
\begin{align}
\int_{\mathds{R}}  \big \vert \mathcal{G}_{i,j}(t,x,y)  \big \vert \cdot \vert h(y) \vert \, dy  \leq C \cdot \frac{1 +  \vert x \vert }{(1+t)^{3/2}} \int_\mathds{R} (1+ \vert y \vert ) \cdot  \vert h(y) \vert  \, dy. \label{Ineq:Green_L1}
\end{align}
\end{theorem}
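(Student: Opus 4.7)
The plan is to follow the Zumbrun--Howard pointwise Green's function approach as adapted to critical fronts by Fay\'e and Holzer. Start from the Laplace representation
\begin{align*}
\mathcal{G}(t,x,y) = \frac{1}{2\pi i} \int_\Gamma e^{\lambda t} G_\lambda(x,y)\, d\lambda,
\end{align*}
where $G_\lambda(x,y)$ is the pointwise resolvent kernel of $\lambda - \mathcal{L}$ and $\Gamma$ is a contour lying to the right of the spectrum. The short-time bounds \eqref{Ineq:Green_pointwise} and \eqref{Ineq:Green_infty} are essentially classical: the weight-conjugated operator $\mathcal{L}$ is a uniformly (in the second variable) parabolic $2\times2$ system with smooth bounded coefficients (the weight derivatives $w'/w,w''/w$ are bounded by construction of $w$), so Aronson-type Gaussian upper bounds for parabolic systems yield \eqref{Ineq:Green_pointwise}, and \eqref{Ineq:Green_infty} follows by integrating in $y$.

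For the long-time bound \eqref{Ineq:Green_L1}, the heart of the argument is an explicit representation of $G_\lambda(x,y)$ via Jost-type solutions of the ODE $U' = M(x,\lambda)U$ from \eqref{Def:Operator_as_ODE}. I would build bases $\{\phi_j^-(x,\lambda)\}$ of the unstable subspace at $-\infty$ and $\{\phi_j^+(x,\lambda)\}$ of the stable subspace at $+\infty$, using the region of consistent splitting produced by Proposition \ref{Prop:Ess_Stability}, and write $G_\lambda(x,y)$ as a matching combination whose denominator is the Evans function $D(\lambda) = \det[\phi_1^-,\phi_2^-,\phi_1^+,\phi_2^+]$. By Assumption \eqref{Ass:Point_spectrum} together with Proposition \ref{Prop:Ess_Stability}, $G_\lambda(x,y)$ extends meromorphically to the region $\Omega$, with the only obstruction being the branch point at $\lambda = 0$ coming from $\nu_\pm(\lambda) = \pm\sqrt{\lambda}$.

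The critical step is the local analysis near $\lambda = 0$. I would introduce the local uniformizer $\gamma$ via $\lambda = -\gamma^2$ and invoke the Gap Lemma of Gardner--Zumbrun to continue the slowly decaying Jost mode $\phi^+$ analytically across the branch point into a neighborhood of $\gamma = 0$. The output of this analysis is an expansion of the form $G_\lambda(x,y) = G_{-1}(x,y)/\gamma + G_0(x,y) + O(\gamma)$, where the residue structure is controlled by the (non-)existence of a translational eigenfunction in the weighted space: since for the critical front $(a'(x), i'(x))/w(x)$ fails to be in the function space (the factor $xe^{-x}$ at $+\infty$ prevents square-integrability after division by $w(x) = e^{-x}$), the zero of $D$ at the origin is of the exact order needed to yield the $t^{-3/2}$ rate. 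Deforming $\Gamma$ to hug the essential spectrum and collapsing onto a small dumbbell contour around the branch point, one then estimates
\begin{align*}
\frac{1}{2\pi i}\int_\Gamma e^{\lambda t} G_\lambda(x,y)\, d\lambda \;\lesssim\; (1+|x|)(1+|y|)\int_{\mathbb{R}} e^{-\gamma^2 t}\, \gamma^2\, d\gamma \;\lesssim\; \frac{(1+|x|)(1+|y|)}{(1+t)^{3/2}},
\end{align*}
after absorbing the spatial growth of the Jost modes into the $(1+|x|)$ and $(1+|y|)$ factors; contributions from the bulk of $\Gamma$ decay exponentially in $t$ thanks to the parabolic bound on the essential spectrum. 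Integrating against $h(y)$ and moving the supremum in $x$ inside yields \eqref{Ineq:Green_L1}.

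The main obstacle is the explicit expansion of $G_\lambda$ around the branch point: one has to keep track of the polynomial growth of the continued Jost solutions, identify the coefficients in the Laurent expansion in $\gamma$, and verify that the weighted setting stays within the hypotheses of \cite{FAYE_lotke_volterra_critical_stability}. Once the essential spectrum is configured as in Proposition \ref{Prop:Ess_Stability} and the discrete spectrum is under the control assumed in \eqref{Ass:Point_spectrum}, the algebraic structure of $M(x,\lambda)$ matches that of \cite{FAYE_lotke_volterra_critical_stability} so their Evans-function computation carries over; this is what makes it possible to import the estimates rather than redo the full Gap Lemma analysis from scratch.
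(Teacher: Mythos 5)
This is a cited theorem: the paper does not prove it, but imports it directly from Fay\'{e} and Holzer (Prop.~4.1 and Lemma~5.1 in \cite{FAYE_lotke_volterra_critical_stability}), after first verifying in Proposition~\ref{Prop:Ess_Stability} that the essential spectrum of $\mathcal{L}$ in the weighted space has precisely the structure required by their analysis, and after noting (in the introduction) that the Fay\'{e}--Holzer argument is largely independent of the specific choice of weight. So there is no ``paper's own proof'' to compare yours against; the paper's argument for the validity of the statement amounts to ``the algebraic structure of $M(x,\lambda)$ and the spectral geometry match, so the result carries over.''

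That said, your sketch is a reasonable outline of the Zumbrun--Howard/Fay\'{e}--Holzer machinery that underlies the cited result, and your concluding paragraph arrives at exactly the paper's stance: the Evans-function computation from \cite{FAYE_lotke_volterra_critical_stability} is to be imported, not redone. Two cautions. First, check your uniformizer convention: for $\nu_\pm(\lambda,1) = \pm\sqrt{\lambda}$ one normally sets $\gamma = \sqrt{\lambda}$, whereas your $\lambda = -\gamma^2$ puts $\gamma$ on the imaginary axis where the spectral gap fails; this is likely a typo but worth fixing. Second, your claim that ``the zero of $D$ at the origin is of the exact order needed to yield the $t^{-3/2}$ rate'' because $(a',i')/w$ falls out of the space is the right heuristic (and the paper makes the same remark after stating Assumption~\eqref{Ass:Point_spectrum}), but establishing the precise Laurent order of $G_\lambda$ in $\gamma$ near the branch point is exactly the demanding step that the paper avoids by citing; if you were actually carrying this out you would need the full Gap Lemma computation, the tracking of the polynomially growing Jost modes responsible for the $(1+|x|)(1+|y|)$ factors, and a verification that the unboundedness of $w$ for $x \le 0$ does not affect the exponential localization of $M(x,\lambda)$ that those computations require. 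The paper handles the last point at the level of Proposition~\ref{Prop:Ess_Stability} and otherwise defers to the reference.
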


In \cite{FAYE_lotke_volterra_critical_stability}, the authors prove a local stability result for the critical traveling waves of a Lotka-Volterra model with two species. They consider an integrable weight $w(x)$ and follow the reasoning behind Inequality \eqref{Ineq:Front_weight_est}. We extend their proof by using the a-priori bounds that we have for the left tail of the system, see \eqref{Ineq:Back_weight_est}. The following Lemma gives control of the resulting integrals over time:

\begin{lemma}[Lemma 2.3 in \cite{Xin_Root_Integral_Lemma}] \label{Lem:Root_Integral_Lemma}
Let $\alpha, \beta, \gamma,t >0$ with $\alpha \leq \beta + \gamma -1$. If either $\alpha \leq \beta,  \gamma \neq 1$, or $\alpha < \beta, \gamma = 1$, then there exists a constant $C$ such that
\begin{align}
\int_0^{\frac{t}{2}} \frac{1}{(1+t-s)^\beta} \frac{1}{(1+s)^\gamma} \, ds &\leq C \frac{1}{(1+t)^\alpha}.
\intertext{Similarly, if either $\alpha \leq \gamma, \beta \neq 1$, or $\alpha < \gamma, \beta = 1$, then}
\int_{\frac{t}{2}}^t \frac{1}{(1+t-s)^\beta} \frac{1}{(1+s)^\gamma} \, ds &\leq C \frac{1}{(1+t)^\alpha}.
\end{align}
\end{lemma}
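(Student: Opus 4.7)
The plan is to exploit the localization of the kernel on each half-subinterval to trivialize one of the two factors. On $[0,t/2]$ we have $t-s\geq t/2$, hence
\begin{align*}
1+t-s \;\geq\; 1+\tfrac{t}{2} \;\geq\; \tfrac{1}{2}(1+t),
\end{align*}
so that $(1+t-s)^{-\beta}\leq 2^{\beta}(1+t)^{-\beta}$. Pulling this factor out reduces the first inequality to estimating the one-sided integral $J(t):=\int_{0}^{t/2}(1+s)^{-\gamma}\,ds$. Symmetrically, on $[t/2,t]$ one has $1+s\geq (1+t)/2$, so after pulling out $(1+s)^{-\gamma}\leq 2^{\gamma}(1+t)^{-\gamma}$ and changing variables $u=t-s$, the second inequality reduces to estimating $\int_{0}^{t/2}(1+u)^{-\beta}\,du$. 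Thus both inequalities follow from a single scalar estimate with the roles of $\beta$ and $\gamma$ swapped, which is exactly the asymmetry that appears in the hypotheses.

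Next I would compute $J(t)$ by elementary integration, distinguishing three cases according to the value of $\gamma$. If $\gamma>1$ then $J(t)\leq \int_{0}^{\infty}(1+s)^{-\gamma}\,ds = (\gamma-1)^{-1}$, a constant, and the first integral is bounded by $C(1+t)^{-\beta}$, which is $\leq C(1+t)^{-\alpha}$ under the hypothesis $\alpha\leq\beta$. If $\gamma<1$ then $J(t)=(1-\gamma)^{-1}\bigl((1+t/2)^{1-\gamma}-1\bigr)\leq C(1+t)^{1-\gamma}$, producing the bound $C(1+t)^{-(\beta+\gamma-1)}$, which matches the hypothesis $\alpha\leq\beta+\gamma-1$ exactly. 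If $\gamma=1$ then $J(t)=\ln(1+t/2)$, producing the bound $C\ln(1+t)\cdot(1+t)^{-\beta}$; since $\ln(1+t)\leq C_{\varepsilon}(1+t)^{\varepsilon}$ for any $\varepsilon>0$, the strict inequality $\alpha<\beta$ allows one to absorb the logarithm by choosing $\varepsilon=\tfrac{1}{2}(\beta-\alpha)>0$.

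The second inequality is identical after the substitution $u=t-s$ that interchanges the two factors and maps $[t/2,t]$ to $[0,t/2]$; the role of $\gamma$ is then played by $\beta$, explaining why the hypothesis becomes $\alpha\leq\gamma$ with strict inequality in the borderline case $\beta=1$. No further work is required.

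There is no serious obstacle; the only delicate point is the borderline case $\gamma=1$ (respectively $\beta=1$), where the logarithmic factor forces the strict inequality in the hypothesis. Everything else is a direct computation, and the uniform bound on $\alpha$ via $\alpha\leq\beta+\gamma-1$ is precisely what is needed to combine the $(1+t)^{-\beta}$ and $(1+t)^{1-\gamma}$ decays into a single $(1+t)^{-\alpha}$ bound in the regime $\gamma<1$.
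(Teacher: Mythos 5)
Your proof is correct. Note, however, that the paper itself does not prove this lemma; it is quoted verbatim as Lemma 2.3 from the cited reference \cite{Xin_Root_Integral_Lemma}, so there is no in-paper argument to compare against. Your derivation is the standard one: on $[0,t/2]$ the bound $1+t-s\geq\tfrac12(1+t)$ lets you pull out the $(1+t)^{-\beta}$ factor, and the remaining $\int_0^{t/2}(1+s)^{-\gamma}\,ds$ is estimated by splitting into $\gamma>1$ (convergent, use $\alpha\leq\beta$), $\gamma<1$ (polynomial growth $(1+t)^{1-\gamma}$, use $\alpha\leq\beta+\gamma-1$), and $\gamma=1$ (logarithmic, absorbed via $\alpha<\beta$). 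The change of variables $u=t-s$ reduces the second integral to the first with $\beta$ and $\gamma$ interchanged, which is exactly why the hypotheses swap. All three case computations check out, and the hypotheses are used precisely where needed.
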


We can now prove the stability of the critical front:

\begin{proof}[\textbf{Proof of Theorem \ref{Thm:large_time_decay}}]
We adopt the notation introduced in Section \ref{Sec:Notation_Pert}. That is, we consider the vector $p = (u,v)$, and write $ \vert p(t,x) \vert  =  \vert \vert p(t,x) \vert \vert_1 $. Note that since $ \vert wu \vert  =  \vert \tilde{A} \vert  \leq 1$, all reaction-terms of the perturbation System \eqref{Eq:Perturbed_Oparator_corrected} are at most linear in $ \vert p \vert $. Thus, by a standard Gronwall estimate and a fixed-point argument, a unique smooth solution (that can grow at most exponentially over time) of System \eqref{Eq:Perturbed_Oparator_corrected} exists for arbitrarily long times, check e.g. chapter 14 in \cite{Smoller_Shocks_Reaction_Diffusion}, and Thm. 46.4 and concluding remarks in \cite{Sell_Dynamics_Evolutionary_Equations}.

We will prove that
\begin{align}
\Theta(t) := \sup_{s \leq t} \sup_{x \in \mathds{R}} \frac{(1+s)^{3/2}}{1+ \vert x \vert }  \vert p(s,x) \vert \label{Eq:Theta_def}
\end{align}
is bounded uniformly in $t \geq 0$, if the initial data is small enough. We introduce a border $B \leq 0$ to be specified later, and split the Duhamel Formula \eqref{Eq:Duhamels_formula}:
\begin{align}
 \vert p(t,x) \vert  & \leq \Big \vert \int_{\mathds{R}}  \mathcal{G}(t,x,y) p(0,y) \, dy
\label{Eq:Duhamel_const_part} \Big \vert
\\
& + \Big \vert 
\int_0^t ds \int_{-\infty}^B \mathcal{G}(t-s,x,y) N(p)(s,y) \, dy \Big \vert
\label{Eq:Duhamel_neg_part}
\\
& + \Big \vert \int_0^t ds \int_{B}^{+\infty} \mathcal{G}(t-s,x,y) N\big (p)(s,y) \, dy \Big \vert.
\label{Eq:Duhamel_pos_part}
\end{align}
For $y \geq B$, we use the exponential decay of $w(y)$, whereas for $y \leq B$, we use the exponential decay of $A(t,y)$, see \eqref{Ineq:Back_weight_est}. Theorem \ref{Thm:Faye_Green_bound} yields different results for $\mathcal{G}$ for $t \leq 1$ and $t \geq 1$, thus we also differentiate the above terms for $t-s \leq 1$ and $t-s \geq 1$. In the following, we let $C$ be a universal constant that does not depend on $B$, whereas $C_B$ will be a universal constant that does.

We shift the given traveling wave $a(x), i(x)$, such that there exists a $\delta_I >0$ with
\begin{align}
i(x) \geq 1 + 2 \delta_I \qquad \forall x \leq 0. \label{Eq:Main_Thm_I_Nec}
\end{align}
We assume that $ \vert  \vert \tilde{I}(0,x) \vert  \vert _\infty \leq \delta_I$, then
\begin{align}
I(0,x) = i(x) + \tilde{I}(0,x) \geq 1 + \delta_I \qquad \forall x \leq 0. \label{Eq:Main_Thm_I_Nec1}
\end{align}
For the moment, we also assume that
\begin{align}
\vert \tilde{I}(t,x=0)\vert = \vert v(t,x=0) \vert \leq \delta_I  \qquad  \forall t \geq 0, \label{Eq:Main_Thm_I_Nec2}
\intertext{which in particular implies that}
I(t,x=0) \geq 1 + \delta_I  \qquad \forall t \geq 0.
\end{align}
We will verify \eqref{Eq:Main_Thm_I_Nec2} a posteriori, by proving that the perturbations stay small. Given \eqref{Eq:Main_Thm_I_Nec1} and \eqref{Eq:Main_Thm_I_Nec2}, we can apply Proposition \ref{Prop:A_priori_estimates}, the Feynman-Kac formula. As a result, there exists an exponent $\zeta >0$, such that
\begin{align}
 \vert N(p)(t,x) \vert  &= \big \vert w(x) u(t,x) \cdot \big ( u(t,x) + v(t,x)  \big ) \big \vert \nonumber
\\
&  \leq A(t,x) \cdot \big \vert p(t,x) \big \vert
\\
&  \leq C e^{ \zeta x} \cdot \big \vert p(t,x) \big \vert && \forall x \leq B \leq 0. \label{Bound:Main_proof_back_bound} \nonumber
\intertext{In contrast, for $x \geq B$, we will use the general bound}
 \vert N(p)(s,x) \vert  & \leq w(x) \cdot  \vert p(t,x) \vert ^2 && \forall x \in \mathds{R}.
\end{align}

Now choose any $\epsilon \leq \delta_I$. We estimate \eqref{Eq:Duhamel_const_part}, \eqref{Eq:Duhamel_neg_part} and \eqref{Eq:Duhamel_pos_part} separately, starting with\\

\textbf{1) The long-time case $\boldsymbol{t \geq 1}$}\\
By Estimate \eqref{Ineq:Green_L1} of Theorem \ref{Thm:Faye_Green_bound}, the linear Part \eqref{Eq:Duhamel_const_part} is bounded by
\begin{align}
\Big \vert \int_{\mathds{R}} G(t,x,y) p(0,y) dy \Big \vert \leq
C \frac{1+\vert x \vert }{(1+t)^{3/2}} \int_\mathds{R} (1+\vert y\vert) \cdot \vert p(0,y) \vert dy. \label{Eq:Duhamel_const_part_done}
\end{align}
For the moment, we only require that
\begin{align}
P(0) := \vert \vert p(0,x) \cdot (1+ \vert x \vert ) \vert \vert_{L^1(\mathds{R})}  \leq 1, \label{Eq:P0_Def}
\end{align}
then the above expression is of order $(1+\vert x\vert)/(1+t)^{3/2}$.

Regarding the nonlinear part, we first consider the case $t-s \leq 1$. For the back of the wave, we use the exponential Decay \eqref{Bound:Main_proof_back_bound} and Estimate \eqref{Ineq:Green_infty} of Theorem \ref{Thm:Faye_Green_bound}:
\begin{align}
& \int_{t-1}^t ds \int_{-\infty}^{B} \big \vert \mathcal{G}(t-s,x,y) \big \vert \cdot \vert N (p)(s,y) \vert dy \nonumber
\\
& \leq C \int_{t-1}^t ds \int_{-\infty}^{B} \big \vert \mathcal{G}(t-s,x,y) \big \vert \cdot e^{ \zeta y} \vert p(t,y) \vert \, dy \nonumber
\\
& \leq C \int_{t-1}^t ds \int_{-\infty}^{B} \big \vert \mathcal{G}(t-s,x,y)\big \vert \cdot e^{ \zeta y} \Theta(s) \frac{1+\vert y \vert }{(1+s)^{3/2}}  \, dy
\\
& \leq C \Theta(t) \frac{1}{(1+t)^{3/2}} \int_{-\infty}^{B} \big \vert \mathcal{G}(t-s,x,y)\big \vert \cdot e^{ \zeta y} (1+\vert y \vert ) \, dy \nonumber \\
& \leq C \Theta(t) \frac{1}{(1+t)^{3/2}} \cdot \sup_{y \leq B} \big \{ e^{\zeta y} (1+\vert y \vert ) \big \}. \nonumber
\label{Eq:Duhamel_neg_part_done_Small_st}
\end{align}
Note that by choosing $B \leq 0$ sufficiently small, the supremum in the last term can be made arbitrarily small.

For $x \geq B$, we use the fact $w(x) \cdot (1+\vert x \vert)^2$ is bounded in $ \vert  \vert . \vert  \vert _\infty$. We apply Estimate \eqref{Ineq:Green_infty} of Theorem \ref{Thm:Faye_Green_bound}:
\begin{align}
& \int_{t-1}^t ds \int_{B}^\infty \big \vert \mathcal{G}(t-s,x,y)\big \vert \cdot \vert N (p)(s,y) \vert \, dy \nonumber \\
&  \leq \int_{t-1}^t ds \int_{B}^\infty \big \vert \mathcal{G}(t-s,x,y) \big \vert \cdot \Theta(s)^2 \frac{(1+ \vert y \vert )^2}{(1+s)^{3}} w(y) \, dy \nonumber \\
 & \leq C \Theta(t)^2 \frac{1}{(1+t)^3} \int_{t-1}^t ds \int_{B}^\infty \big \vert \mathcal{G}(t-s,x,y)\big \vert \cdot (1+ \vert y \vert )^2 w(y) \, dy \\
& \leq C_B \Theta(t)^2 \frac{1}{(1+t)^3}. \nonumber
\label{Eq:Duhamel_pos_part_done_Small_st}
\end{align}

Now consider $t-s \geq 1$. Regarding $y \leq B$, we again use Bound \eqref{Bound:Main_proof_back_bound}, and Estimate \eqref{Ineq:Green_L1} of Theorem \ref{Thm:Faye_Green_bound}:
\begin{align}
&  \int_0^{t-1} ds \int_{-\infty}^{B} \big \vert \mathcal{G}(t-s,x,y)\big \vert \cdot  \vert  N (p)(s,y)  \vert  \,  dy  \nonumber \\
& \leq C
\int_0^{t-1} ds \int_{-\infty}^B \big \vert \mathcal{G}(t-s,x,y) \big \vert \cdot e^{ \zeta y}  \vert p(t,y) \vert  \, dy \nonumber \\
& \leq C \int_0^{t-1} ds  \int_{-\infty}^B \big \vert \mathcal{G}(t-s,x,y) \big \vert \cdot e^{ \zeta y} \Theta(s) \frac{1+ \vert y \vert }{(1+s)^{3/2}}  \, dy  \\
& \leq C \Theta(t) \cdot (1+ \vert x \vert ) \int_0^t \frac{1}{(1+t-s)^{3/2}} \frac{1}{(1+s)^{3/2}} \, ds \int_{-\infty}^B e^{\zeta y} (1+ \vert y \vert )^2 \, dy \nonumber
\intertext{We apply Lemma \ref{Lem:Root_Integral_Lemma} to bound the integral over time:}
& \leq C \Theta(t) \frac{1+ \vert x \vert }{(1+t)^{3/2}} \int_{-\infty}^B e^{\zeta y} (1+ \vert y \vert )^2 \, dy. \label{Eq:Duhamel_neg_part_done_Big_st}
\end{align}
Again, note that we can make the last integral as small as we want if we shift $B$ appropriately.

Regarding $y \geq B$, by Estimate \eqref{Ineq:Green_L1} of Theorem \ref{Thm:Faye_Green_bound}:
\begin{align}
&  \int_0^{t-1} ds \int_{B}^{+\infty} \big \vert \mathcal{G}(t-s,x,y)\big \vert \cdot \vert N (p)(s,y) \vert \,  dy  \nonumber \\
& \leq 
\int_0^{t-1} ds \int_B^\infty \big \vert \mathcal{G}(t-s,x,y)\big \vert \cdot \Theta(s)^2 \frac{(1+ \vert y \vert )^2}{(1+s)^{3}} w(y) \, dy \nonumber \\
& \leq C \Theta(t)^2  (1+ \vert x \vert ) \int_0^t \frac{1}{(1+t-s)^{3/2}} \frac{1}{(1+s)^3} \, ds  \int_B^\infty (1+ \vert y \vert )^3 w(y) \, dy \nonumber \\
& \leq C_B  \Theta(t)^2 (1+ \vert x \vert ) \int_0^t \frac{1}{(1+t-s)^{3/2}} \frac{1}{(1+s)^3} \, ds
\intertext{We apply Lemma \ref{Lem:Root_Integral_Lemma} to bound the integral over time, yielding}
& \leq C_B \Theta(t)^2 \frac{1+ \vert x \vert }{(1+t)^{3/2}}.  \label{Eq:Duhamel_pos_part_done_Big_st}
\end{align}

Now, for all $t \geq 1$, combining our estimates \eqref{Eq:Duhamel_const_part_done}, \eqref{Eq:Duhamel_neg_part_done_Small_st}, \eqref{Eq:Duhamel_pos_part_done_Small_st}, \eqref{Eq:Duhamel_neg_part_done_Big_st},  \eqref{Eq:Duhamel_pos_part_done_Big_st} results in
\begin{align}
 \vert p(t,x) \vert  & \leq C P(0) \frac{1+ \vert x \vert }{(1+t)^{3/2}}
\\
& + C \Theta(t) \frac{1}{(1+t)^{3/2}} \cdot \sup_{y \leq B} \big \{ e^{\zeta y} (1+ \vert y \vert ) \big \} \label{Eq:Thate_linear1}
\\
& + C_B \Theta(t)^2 \frac{1}{(1+t)^{3/2}}
\\
& + C \Theta(t) \frac{1+ \vert x \vert }{(1+t)^{3/2}} \int_{-\infty}^B e^{\zeta y} (1+ \vert y \vert )^2 \, dy \label{Eq:Thate_linear2}
\\
& + C_B \Theta(t)^2 \frac{1+ \vert x \vert }{(1+t)^{3/2}}. 
\end{align}
Next, choose $B$ sufficiently small such that the constants in \eqref{Eq:Thate_linear1} and \eqref{Eq:Thate_linear2}, the terms that are linear in $\Theta$, are both bounded by $1/8$. Then, we can simplify the above to
\begin{align}
 \vert p(t,x) \vert  & \leq C P(0) \frac{1+ \vert x \vert }{(1+t)^{3/2}} + \frac{1}{4} \Theta(t) \frac{1+ \vert x \vert }{(1+t)^{3/2}} + C_B \Theta(t)^2 \frac{1+ \vert x \vert }{(1+t)^{3/2}}. 
\intertext{Inserting our definition of $\Theta$, see \eqref{Eq:Theta_def}, we get}
\Theta(t) & \leq C P(0) + \frac{1}{4} \Theta(t) + C_B \Theta(t)^2 \qquad \forall t \geq 1. \label{Eq:T_Large_complete_bound}
\end{align}

\textbf{2) The short-time case $\boldsymbol{t < 1}$}\\
Estimate \eqref{Ineq:Green_infty} of Theorem \ref{Thm:Faye_Green_bound} yields the following bound for the linear Part \eqref{Eq:Duhamel_const_part}:
\begin{align}
\int_{\mathds{R}} \big \vert \mathcal{G}(t,x,y)\big \vert \cdot  \vert  p(0,y) \vert  \, dy   \leq C \cdot \, \vert   \vert   p(0,x) \vert  \vert_\infty. \label{Eq:Duh_short_1}
\end{align}
For the nonlinear part, we again split the expression into two parts. This time, we use the point-wise Estimate \eqref{Ineq:Green_pointwise} of Theorem \ref{Thm:Faye_Green_bound}, valid for short times:

\begin{align}
& \int_0^{t-1} ds \int_{-\infty}^{B} \big \vert \mathcal{G}(t-s,x,y)\big \vert \cdot \vert N (p)(s,y) \vert \, dy  \nonumber
\\
& \leq  \int_0^{t-1} ds \int_{-\infty}^{B} \big \vert \mathcal{G}(t-s,x,y) \big \vert \cdot e^{\zeta y} \vert p(t,y) \vert \, dy \nonumber
\\
& \leq \Theta(t) \frac{1}{(1+t)^{3/2}} \cdot   \int_0^{t-1} ds \int_{-\infty}^{B} \big \vert \mathcal{G}(t-s,x,y) \big \vert \cdot e^{\zeta y} (1+ \vert y \vert ) \, dy  
\\
& \leq C \Theta(t) \int_0^{t-1} ds \int_{-\infty}^{B} \frac{1}{t^{1/2}}  e^{- \frac{ \vert x-y \vert ^2}{\kappa t} }  e^{\zeta y} (1+ \vert y \vert ) \, dy. \nonumber
\intertext{
The last integral is a short-time heat kernel applied to the exponentially decaying function $e^{\zeta x} (1+ \vert x \vert )$. We choose $B$ appropriately such that the entire above expression is bounded by}
& \leq \frac{1}{16} \Theta(t). \label{Eq:Duh_short_2}
\end{align}

Considering $y \geq B$:

\begin{align}
& \int_0^{t-1} ds \int_{B}^{+\infty} \big \vert \mathcal{G}(t-s,x,y)\big \vert \cdot \vert N (p)(s,y)\vert \,  dy 
	\nonumber \\
& \leq   \int_0^{t-1} ds \int_{B}^{+\infty} \big \vert \mathcal{G}(t-s,x,y) \big \vert \cdot \Theta(s)^2 \frac{(1+ \vert y \vert )^2}{(1+s)^3} \, dy
\nonumber \\
& \leq C \Theta(t)^2 \int_0^{t-1} ds \int_{B}^{+\infty}  \frac{1}{t^{1/2}} e^{- \frac{ \vert x-y \vert ^2}{\kappa t} } (1+ \vert y \vert )^2 \, dy \label{Eq:Duh_short_3} \\
& \leq C_B \Theta(t)^2. \nonumber
\end{align}
In view of \eqref{Eq:Duh_short_1}, \eqref{Eq:Duh_short_2}, \eqref{Eq:Duh_short_3}, we see that for all $t < 1$:
\begin{align}
 \vert p(t,x) \vert  &\leq C \, \vert   \vert   p(0,x) \vert  \vert_{L^\infty(\mathds{R})} + \frac{1}{16} \Theta(t) + C_B \Theta(t)^2.
\intertext{This implies that for small times, by the Definition of $\Theta$ \eqref{Eq:Theta_def}:}
\Theta(t) &\leq C \, \vert   \vert   p(0,x) \vert  \vert_{L^\infty(\mathds{R})} + \frac{1}{4} \Theta(t) + C_B \Theta(t)^2 \qquad \forall t <1. \label{Eq:T_Small_complete_bound}
\end{align}

\textbf{3) Convergence given small initial data}\\
To control both the short-time Bound \eqref{Eq:T_Small_complete_bound} and the large-time Bound \eqref{Eq:T_Large_complete_bound}, we will consider initial data where
\begin{align}
Q(0) := \vert \vert p(0,x) \vert \vert_{L^\infty(\mathds{R})} + \vert \vert p(0,x) \cdot (1+ \vert x \vert ) \vert \vert_{L^1(\mathds{R})}
\end{align}
is sufficiently small. Regarding both the long-time and the short-time case, we first choose a border $B \in \mathds{R}$ such that both \eqref{Eq:T_Small_complete_bound} and \eqref{Eq:T_Large_complete_bound} are valid, resulting in
\begin{align}
\Theta(t) & \leq C Q(0) + \frac{1}{4} \Theta(t) + C_B \Theta(t)^2 \qquad \forall t \geq 0. \label{Eq:Theta_final}
\end{align}
Without loss of generality, we may assume that $C \geq 1$. Consider initial data that are small enough to fulfill
\begin{align}
2 C Q(0) < \epsilon \quad \text{and} \quad 4 C C_B Q(0) < \frac{1}{2},
\end{align}
where $\epsilon \leq \delta_I$, with $\delta_I$ chosen in \eqref{Eq:Main_Thm_I_Nec}. Then, at $t = 0$:
\begin{align}
\Theta(0) = \sup_{x \in \mathds{R}} \frac{ \vert p(0,x) \vert }{1+ \vert x \vert } \leq Q(0) < 2C Q(0) < \epsilon.
\end{align}
For such initial data, our critical Assumption \eqref{Eq:Main_Thm_I_Nec2} holds for small times $t>0$, since $\Theta(t)$ is continuous in $t$. Now suppose that there exists a first time $T\in (0, \infty)$ such that $\Theta(T) = 2CQ(0)$ for the first time. But then, by \eqref{Eq:Theta_final}:
\begin{align}
\Theta(t) &\leq C Q(0) + \frac{1}{4} \Theta(t) + C_B \Theta(t)^2 \nonumber
\\
& \leq C Q(0) + \frac{1}{4} 2 C Q(0) + C_B 4 C^2 Q(0)^2 \nonumber
\\
& \leq C Q(0) + \frac{1}{2} C Q(0) + C Q(0) \big [ 4 C C_B Q(0)  \big]
\\
& < 2C Q(0), \nonumber
\end{align}
a contradiction to our assumption. As a result, it holds that
\begin{align}
\Theta(t) < 2CQ(0) < \epsilon \qquad \forall t \geq 0,
\end{align}
which not only proves that the perturbation decays, but also shows that the necessary Bound \eqref{Eq:Main_Thm_I_Nec2} holds for all $t \geq 0$.
\end{proof}

% \newpage

\section{Construction of the traveling waves}
\label{Sec:Existence_big_section}
\subsection{Overview and notation}

In the following, we refer to the Wave System \eqref{Eq:perturbed_wave} as $S_0$ if $d=0$, and to $S_d$ for $d >0$. We cite the result for the traveling waves of $S_0$, which was the subject of a previous study:

\begin{theorem} [Thm. 1.1 in \cite{Kreten2022}] \label{Old_main_Theorem}
For $d = 0$ and $r \geq 0, c >0$, consider System $S_0$ \eqref{Eq:perturbed_wave}. Set $\oc : = \max \{ 0, 1-c^2/4 \} $. For each pair $\iminf, \ipinf \in \mathds{R}^+_0$ such that
\begin{align}
\ipinf \in [\oc,1), \qquad \iminf = 2 - \ipinf, \label{main_thm_relation}
\end{align}
there exists a unique non-negative traveling wave $a,i \in C^\infty(\mathds{R}, \mathds{R}^2)$ such that
\begin{align}
\lim_{x \rightarrow \pm \infty} & a(x) = 0, \qquad  \lim_{x \rightarrow \pm \infty} i(x) = i_{\pm \infty}. \label{intro_main_them_limits}
\end{align}
The function $i(x)$ is decreasing, whereas $a(x)$ has a unique local and global maximum. If $\frac{c^2}{4} + \ipinf -1  = 0$, then the distance of the front to its limit behaves like $ x \cdot e^{ - \frac{c}{2} x}$ asymptotically as $x \rightarrow + \infty$. If $ \frac{c^2}{4} + \ipinf -1  > 0$, then convergence as $x \rightarrow + \infty$ is purely exponential. Convergence as $x \rightarrow - \infty$ is purely exponential in all cases. The rates of convergence are
\begin{align}
\begin{aligned}
\mu_{- \infty} &= - \frac{c}{2} + \sqrt{ \frac{c^2}{4} + i_{\pm \infty} -1} >0 , \\
\mu_{+ \infty} &=  \frac{c}{2} - \sqrt{ \frac{c^2}{4} + i_{\pm \infty} -1} >0. \label{Eq:Rates_Conv_Thm_old}
\end{aligned}
\end{align}
Moreover, these are all bounded, non-negative, non-constant and twice differentiable solutions of Eq. \eqref{Eq:perturbed_wave} for $d = 0$.
\end{theorem}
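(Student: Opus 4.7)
The plan is to reformulate $S_0$ as a three-dimensional autonomous ODE: with $b := a'$,
\[
a' = b, \qquad b' = -cb - a + a(a+i), \qquad i' = -\frac{a(r+a+i)}{c},
\]
whose fixed points fill the line $\{(0,0,K) : K \in \mathds{R}\}$. Linearizing at $(0,0,K)$ yields the spectrum $\{0, \mu_\pm\}$ with $\mu_\pm = \tfrac{1}{2}\bigl(-c \pm \sqrt{c^2 - 4(1-K)}\bigr)$: for $K > 1$ the point is a saddle-center with a one-dimensional unstable manifold tangent to $(1, \mu_+, 0)$, whereas for $K \in [\oc, 1)$ it is attracting (a stable node in the $(a,b)$-plane, plus the central direction $(0,0,1)$ along the fixed-point line). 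For each $\iminf > 1$ the invariant-manifold theorem furnishes a unique orbit, up to translation in $x$, that leaves $(0,0,\iminf)$ with $a > 0$; this is the candidate wave, and its uniqueness up to translation already gives the uniqueness statement.

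A conservation-law computation produces the necessary compatibility $\iminf + \ipinf = 2$. Using $ci' = -a(r+a+i)$ to eliminate the nonlinearity in the first wave equation yields
\[
a'' + c a' + c i' + (r+1)\, a = 0,
\]
and integrating once from $-\infty$ gives $a'(x) + c\bigl(a(x)+i(x)-\iminf\bigr) + (r+1) F(x) = 0$ with $F(x) := \int_{-\infty}^x a$. Sending $x \to +\infty$ produces $\int_{\mathds{R}} a = c(\iminf - \ipinf)/(r+1)$. Multiplying the integrated identity by $a$ and integrating once more, and using $\int a\,a' = 0$, $\int a\,F\,dx = \tfrac12(\int a)^2$ together with $\int a(a+i)\,dx = \int a\,dx$ (the latter obtained by integrating the first wave equation directly), the cross terms telescope and yield $\int_{\mathds{R}} a = 2c(\iminf-1)/(r+1)$. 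Matching the two expressions forces $\ipinf = 2 - \iminf$.

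The qualitative properties then follow from a phase-plane argument. Since $i'(x) < 0$ whenever $a(x) > 0$, the $i$-coordinate is strictly monotone along the orbit; combined with an a priori bound on $a$ derived from the integrated identity, this confines the trajectory to a compact region, so the $\omega$-limit set is a single fixed point on $\{a=0\}$, and the integral constraint selects it as $(0,0,2-\iminf)$. Tracking the signs of $b = a'$ and of $a+i-1$ in the $(a,b)$-slice at each fixed $i$ shows that $a$ has a unique local maximum (before $a+i$ drops through $1$ the flow pushes $a$ up, afterwards it pulls it back), and non-negativity of $a$ forces the approach to the target fixed point along its real stable directions, giving $\mu_\pm^{+\infty} \in \mathds{R}$, i.e., $\ipinf \geq \oc$. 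The classification of \emph{all} bounded non-negative non-constant solutions then comes by observing that the monotonicity of $i$ forces $i(x)$ to converge at $-\infty$, whence the system is asymptotically autonomous there and $a$ must converge too; any such solution therefore lies on the unstable manifold of some saddle $(0,0,\iminf)$ with $\iminf > 1$ and, by the above, coincides with a translate of one of the constructed orbits.

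The main obstacle will be the critical case $c^2/4 + \ipinf - 1 = 0$, where $\mu_\pm^{+\infty}$ collide and the linearization at the target fixed point carries a non-trivial Jordan block: the standard stable-manifold theorem only gives the leading exponential, while the announced $x\,e^{-cx/2}$ correction requires a normal-form / center-manifold reduction along the strong-stable direction, in the spirit of the classical treatment of critical FKPP fronts. The purely exponential rates $\mu_{\pm\infty}$ in the non-critical case, by contrast, are immediate from the standard stable and unstable manifold theorems applied at $(0,0,\ipinf)$ and $(0,0,\iminf)$ respectively.
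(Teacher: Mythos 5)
This theorem is imported from \cite{Kreten2022} (it is stated as ``Thm.\ 1.1 in [Kreten2022]'') and is not proved in the present paper, so there is no in-paper proof to compare against line by line. However, the paper's Section~\ref{Sec:Existence_big_section} redoes the analogous construction for $d>0$, and those arguments closely mirror what the cited proof for $d=0$ must look like, so a meaningful comparison is still possible.

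Your overall strategy --- reduce to the $3$D autonomous system, identify the line of fixed points, launch from the one-dimensional unstable manifold of a saddle with $\iminf>1$, derive the relation between the limits by integral identities, and read off the rates from the linearization --- is exactly the approach the paper takes in its $d>0$ analogue. In particular, your derivation of $\iminf+\ipinf=2$ via the identity $a''+ca'+ci'+(1+r)a=0$, the $L^1$ integration, the multiplication by $a$, and the use of $\int a(a+i)=\int a$ is precisely the computation that underlies Proposition~\ref{Lem:Epsilon_limit_bounds} of the paper (where for $d>0$ an extra $\mathcal{O}(d)$ term appears). The non-negativity constraint forcing real stable eigenvalues, and hence $\ipinf\geq \oc$, and the uniqueness-up-to-translation via the one-dimensionality of the unstable manifold, are likewise the paper's arguments.

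There are, however, two genuine gaps in the phase-plane part of your sketch, and both are places where the paper invests real work. First, ``an a priori bound on $a$ ... confines the trajectory to a compact region, so the $\omega$-limit set is a single fixed point on $\{a=0\}$'' does not follow from compactness alone in a $3$D flow; periodic orbits and more complicated recurrence are not ruled out by that remark. The paper instead traps the orbit in a \emph{monotone} attractor (Proposition~\ref{Triangles_3D_Thm}, cited from \cite{Kreten2022}): once $a$ attains its unique maximum one shows $a'<0$, $i'<0$ thereafter, and monotone bounded functions converge. Your sketch asserts the unique maximum but does not supply the invariant region argument that makes $a$ monotone afterwards. Second, in the classification of all bounded non-negative non-constant solutions, ``the system is asymptotically autonomous there and $a$ must converge too'' is not a valid implication; asymptotic autonomy does not by itself force convergence of the remaining components. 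The paper's route (see Lemma~\ref{Lem:global_integrability_perturbed} for the $d>0$ version) is to prove monotonicity of $i$ and a unique extremum for $a$ directly from the sign structure of the ODE, from which $a,a',i'\in L^1$ and convergence of both ends follow.

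Finally, you correctly flag the critical case $\frac{c^2}{4}+\ipinf-1=0$ as incomplete. That is not a minor technicality here: the $x\,e^{-cx/2}$ asymptotic is an explicit part of the statement, and the paper itself must handle it (and does so for $d>0$ at the end of the proof of Theorem~\ref{Thm:existence_small_epsilon}, by identifying a one-dimensional center manifold coinciding with the fixed-point line and then working in the remaining hyperbolic subsystem with a non-trivial Jordan block). Your plan (normal-form / center-manifold reduction) is the right one, but as written the proposal leaves the rate claim unproved in exactly the critical case that matters most for the rest of the paper.

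In summary: the approach is the same as the cited proof's, the conservation-law computation is correct and matches the paper's $d>0$ analogue, but the convergence and classification steps need the explicit trapping/monotonicity arguments rather than the compactness and asymptotic-autonomy shortcuts, and the critical-rate asymptotics still need to be supplied.
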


\begin{figure}[h]
\vspace{1.2cm}
 		\centering
 		\begin{minipage}[c]{0.45\textwidth}
  \begin{picture}(100,100)
	\put(0,0){\includegraphics[width=0.8\textwidth]{./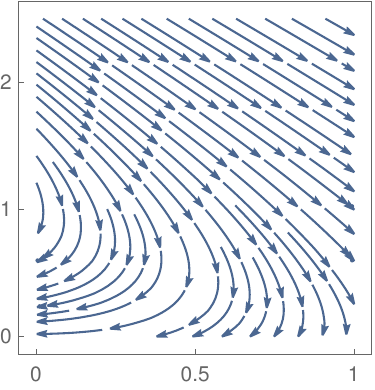}}
	\put(-5,75){\scalebox{1.8}{$i$}}
	\put(90,-5){\scalebox{1.8}{$a$}}
\end{picture}
\end{minipage}
\vspace{0.3cm}
 		\caption{Two-dimensional representation of the family of traveling waves of $S_0$ for $c=2$ and $r=0$. A unique trajectory emerges from each point where $\iminf >1$, and converges to a limit where $\ipinf <1$, where $\iminf + \ipinf = 2$. For $\iminf >2$, the trajectory eventually becomes negative (even though it still seems to converge).}
 		\label{fig:old_wave_phase_plot}
\end{figure}

The bound $\ipinf \geq 1-c^2/4$ is classical for FKPP-fronts. If it is not fulfilled, the solutions spiral around their limit as $x \rightarrow + \infty$, see Section \ref{Sec:Degenerate_Linearization}. Thus, they become negative and are physically irrelevant.

For the entire Section \ref{Sec:Existence_big_section}, keep the phase-plot of $S_0$ in Figure \ref{fig:old_wave_phase_plot} in mind. Qualitatively, we prove that this portrait remains valid for $S_d$: there exists a family of solutions, that continuously vary along the limits $i_{ \pm \infty}$. The only change is a perturbation estimate of type
\begin{align}
\iminf + \ipinf = 2 + \mathcal{O}(d),
\end{align}
which replaces the precise Statement \eqref{main_thm_relation} for $d=0$. Since we focus on the invading fronts, we consider only the case $c \geq 2$, the result is given in Theorem \ref{Prop:Existence_of_a_tr_sol}.

We transform the wave Eq. \eqref{Eq:perturbed_wave} into an equivalent System of ODEs. We denote differentiation w.r.t. $x$ by a prime, and introduce two auxiliary variables $b=a'$ and $j=i'$. For $d \neq 0$, this system in coordinates $(a,b,i,j) \in \mathds{R}^4$ reads
\begin{align}
\frac{d}{dx} \begin{pmatrix}
a \\ b \\ i \\ j
\end{pmatrix}
= \begin{pmatrix}
b \\
a(a+i) -a -cb \\
j \\
- \frac{1}{d} [cj + ra + a(a+i)] \label{Eq:perturbed_wave_ODE}
\end{pmatrix}.
\end{align}
By abuse of notation, we will denote $a'=b$ and $i'=j$, since introducing two auxiliary variables only obfuscates the system.

Section \ref{Sec:Existence_big_section} is organized as follows: for $d >0$ and $K>1$, we analyze the unstable manifold of the fixed point $(a,a',i,i') = (0,0,K,0)$. It is one-dimensional and has one branch that is asymptotically non-negative, which we call $M^-_d(K)$. In Section \ref{Sec:Asymptotics}, we will prove that any finite segment of $M^-_d(K)$ is continuous both in $d$ and $K$. For $d>0$, this will follow from standard perturbation theory for dynamical systems. For passing from $d = 0$ to $ d \sim 0$, we use geometric singular perturbation theory due to F\'{e}nichel \cite{Jones_Singular_Perturbation}. In Section \ref{Sec:Properties}, we prove that a traveling wave $M^-_d(K)$ persists under small perturbations in $d$, up to its limit as $z \rightarrow + \infty$. The estimate $\iminf + \ipinf = 2 + \mathcal{O}(d)$ is proven in Section \ref{Sec:Mapping_Limits}. We use all previous results to prove the existence of a family of non-negative traveling wave solutions in Section \ref{Sec:Existence_invading_front}. Given the existence of these non-negative traveling waves, we conclude that there also must be an invading front among those.

\subsection{Dynamics around the fixed points}
\label{Sec:Asymptotics}
\subsubsection{A degenerate linearization}
\label{Sec:Degenerate_Linearization}

At a fixed point $(a,a',i,i') = (0,0,K,0)$, the Jacobian of the system has Eigenvalues
\begin{align}
\begin{aligned}
&\lambda_1 = 0, & &\lambda_2 = -\frac{c}{d}, \\ & \lambda_3 = -\frac{c}{2} - \sqrt{ \frac{c^2}{4} + K -1}, && \lambda_4 = - \frac{c}{2} + \sqrt{\frac{c^2}{4} + K -1 }. \label{Eq:Eigenvalues_perturbed}
\end{aligned}
\end{align}
The Eigenvalue $\lambda_2$ is new compared to the unperturbed system, all other Eigenvalues remain the same. The associated Eigenvectors are given by
\begin{align}
\begin{aligned}
e_1 &= 
\begin{pmatrix}
0 \\ 0 \\ 1 \\ 0
\end{pmatrix}, \quad
&&e_2 = \begin{pmatrix}
0 \\ 0 \\ -\frac{d}{c} \\ 1
\end{pmatrix},
	\\
e_3 &= \begin{pmatrix}
- c\lambda_3 - d \cdot \lambda_3^2 \\
-c \cdot \lambda_3^2 - d \cdot \lambda_3^3 \\
K+r \\
\lambda_3 (K+r)
\end{pmatrix},
&& e_4 = \begin{pmatrix}
- c\lambda_4 - d \cdot \lambda_4^2 \\
-c \cdot \lambda_4^2 - d \cdot \lambda_4^3 \\
K+r \\
\lambda_4 (K+r)
\end{pmatrix}. \label{Eq:Eigenvectors_perturbed}
\end{aligned}
\end{align}

For $ K \in [0,1)$, corresponding to a possible limit as $x \rightarrow + \infty$, the Eigenvalues are real-valued if $K \geq 1-c^2/4$. For $K=0$, which is the limit of an invading front, we require that $c \geq 2 $, otherwise converging trajectories can not stay non-negative: the $a$-component spirals around its limit $0$ if $\lambda_3$ and $\lambda_4$ have an imaginary part.

For fixed $d >0$, we first analyze the behavior around a fixed point $(a,a',i,i') = (0,0,K,0)$ locally. The Jacobian of the system at the fixed point is degenerate due to the continuum of fixed points, we apply center manifold theory to work out the higher moments of the approximation. A practical introduction to this topic has been written by J. Carr \cite{Carr_Center_Manifold}. If we ensure that all Eigenvectors \eqref{Eq:Eigenvectors_perturbed} are distinct, they span the entire $\mathds{R}^4$. In this case, the Jacobian can be diagonalized and the calculations are standard: no bifurcaction, neither in $d$ nor $K$, occurs as long as all the Eigenvalues $\lambda_{2,3,4}$ remain real-valued and unequal zero.

If $K=1$, then $\lambda_4 = 0$, and if $K = 1 - c^2/4$, then $\lambda_3 = \lambda_4$ and their eigenspaces become colinear. If we exclude these two cases, the result is as intuitive as it is comfortable: the center manifold locally coincides with the set of fixed points, a defect linearization with three hyperbolic directions and one constant direction is the result, see Proposition \ref{Prop:Linearization_Perturbed}. For the case $d = 0$, we present the rather standard calculations and the necessary changes of coordinates into the system of Eigenvectors in detail \cite{Kreten2022}. The following result is completely analogue:

\begin{proposition} \label{Prop:Linearization_Perturbed}

For $d > 0,c>0$ and $K\in \mathds{R}$ subject to the conditions
\begin{align}
K \neq 1, \quad K \neq 1 - c^2/4,
\end{align}
consider the fixed point $(0,0,K,0)$ of $S_d$. Then, in a neighborhood of the fixed point, the center manifold of the fixed point coincides with the set of points
\begin{align}
\{ a=a'=i'=0, \, i \in \mathds{R} \}.
\end{align}
Moreover, in this neighborhood, the flow $S_d$ is homeomorphic to
\begin{align}
\begin{pmatrix}
	p' \\ u' \\ v ' \\ w'
\end{pmatrix}
 =
\begin{pmatrix}
 	0 \\ \lambda_2 \cdot u \\ \lambda_3 \cdot v \\ \lambda_4 \cdot w
\end{pmatrix}, \label{Eq:Asymptotics_linear}
\end{align}
where $p,u,v,w$ are the coordinates in the system of Eigenvectors \eqref{Eq:Eigenvectors_perturbed}.
\end{proposition}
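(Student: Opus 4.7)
The plan is to mirror the strategy indicated by the paper for the $d=0$ case in \cite{Kreten2022}, exploiting the fact that the degeneracy here is completely explicit. The key structural observation is that the line of fixed points
\begin{align*}
\mathcal{F} := \{(0, 0, i, 0) : i \in \mathds{R}\}
\end{align*}
is a smooth one-dimensional invariant submanifold on which the flow vanishes identically, and its tangent line at $(0,0,K,0)$ is spanned by the center eigenvector $e_1 = (0,0,1,0)^T$ associated with $\lambda_1 = 0$. Since a center manifold is characterized locally as a smooth invariant manifold tangent to the center eigenspace, $\mathcal{F}$ is (locally) a center manifold for the fixed point, which gives the first claim.

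For the homeomorphism statement, I would first check that the eigenvectors listed in \eqref{Eq:Eigenvectors_perturbed} are linearly independent under the standing hypotheses. Under $d,c>0$ the eigenvalue $\lambda_2 = -c/d$ is real, nonzero, and distinct from $\lambda_1=0$; the conditions $K\neq 1$ and $K \neq 1-c^2/4$ force $\lambda_4 \neq 0$ and $\lambda_3 \neq \lambda_4$ respectively. A direct inspection of \eqref{Eq:Eigenvectors_perturbed} then shows $\{e_1,e_2,e_3,e_4\}$ spans $\mathds{R}^4$: $e_1,e_2$ have vanishing first two coordinates, while the first two entries of $e_3,e_4$ form a $2\times 2$ block whose determinant is a nonzero multiple of $(\lambda_3-\lambda_4)(c+d(\lambda_3+\lambda_4))\lambda_3\lambda_4$, nonzero under our assumptions. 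This provides a global linear change of variables in which the Jacobian at $(0,0,K,0)$ becomes $\mathrm{diag}(0,\lambda_2,\lambda_3,\lambda_4)$.

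Next, I would invoke the reduction principle of Shoshitaishvili (equivalently, the partial Hartman--Grobman theorem for partially hyperbolic equilibria): in a neighborhood of the fixed point, the full flow is topologically conjugate to the product of the reduced flow on the center manifold with the linearized flow on the hyperbolic complement. Because $\mathcal{F}$ consists entirely of fixed points, the reduced flow on the center manifold is identically zero, i.e.\ $p' = 0$; no expansion in higher-order terms is needed, in contrast to a typical center-manifold calculation. The hyperbolic block is already linear in the chosen eigenbasis, producing $u' = \lambda_2 u$, $v' = \lambda_3 v$, $w' = \lambda_4 w$. Composing the linear change of coordinates with the Shoshitaishvili conjugacy yields exactly \eqref{Eq:Asymptotics_linear}.

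The main obstacle I anticipate is purely bookkeeping around parameter regimes: the excluded cases $K=1$ and $K=1-c^2/4$ correspond precisely to the two failure modes of the diagonalization (vanishing of $\lambda_4$ and collision $\lambda_3=\lambda_4$), so these restrictions are sharp for the stated homeomorphism. If $K < 1-c^2/4$ the pair $\lambda_3,\lambda_4$ becomes a complex conjugate pair and the real normal form acquires a rotation--dilation $2\times 2$ block in place of the two real hyperbolic lines; since the analysis in later sections restricts to $K$ ranges where the eigenvalues are real (so that non-negative trajectories exist, cf.\ Section \ref{Sec:Degenerate_Linearization}), this cosmetic change does not enter the main statement. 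Apart from these case distinctions, everything is a routine application of standard reduction machinery, made especially clean by the fact that the center dynamics are given a priori by the explicit fixed-point line rather than obtained from a formal power series.
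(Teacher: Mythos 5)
Your strategy matches the one the paper indicates (the paper itself delegates the computation to the $d=0$ treatment in \cite{Kreten2022} and simply notes the eigenvector structure is diagonalizable): change to the eigenbasis, observe the line of equilibria is the center direction, and invoke a reduction/partial-linearization theorem. The explicit citation of Shoshitaishvili's theorem is a reasonable unpacking of what the paper calls a "standard calculation." Two points, however, are not quite right.

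First, the $2\times 2$ determinant of the top block of $(e_3\mid e_4)$ is
\begin{align*}
\lambda_3\lambda_4\,(c+d\lambda_3)(c+d\lambda_4)\,(\lambda_4-\lambda_3),
\end{align*}
not a multiple of $(\lambda_3-\lambda_4)\bigl(c+d(\lambda_3+\lambda_4)\bigr)\lambda_3\lambda_4$ as you wrote. The correct factorization exposes a third degeneracy besides $\lambda_4=0$ and $\lambda_3=\lambda_4$, namely $\lambda_2=-c/d\in\{\lambda_3,\lambda_4\}$ (equivalently $K=1+c^2(1-d)/d^2$), where $e_2$ and $e_3$ or $e_4$ become collinear. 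This locus is \emph{not} excluded by the hypotheses $K\neq 1$, $K\neq 1-c^2/4$, so your claim that the stated exclusions are "sharp for the stated homeomorphism" is incorrect. (The extra locus is harmless in the regime the paper actually uses, $d\in(0,1)$ and $K$ near $(1,2)$, but your argument needs to at least mention that it must be avoided as well.)

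Second, the proposition asserts that the center manifold \emph{coincides with} the line of equilibria, which is a uniqueness claim. You correctly observe that $\mathcal{F}=\{a=a'=i'=0\}$ is \emph{a} center manifold; to conclude it is \emph{the} center manifold you should add the standard fact that every equilibrium in a neighborhood of the fixed point lies on every local center manifold, so any center manifold must contain $\mathcal{F}$, and since the center eigenspace (hence every center manifold) is one-dimensional, coincidence follows. Beyond these two corrections the argument is sound and in line with the paper's intent.
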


Proposition \ref{Prop:Linearization_Perturbed} has two important implications. First, regarding the asymptotics as $x \rightarrow + \infty$, we can use Equation \eqref{Eq:Asymptotics_linear} to deduce

\begin{corollary} \label{Cor:Lyapunov_stable}
For $c>0, d >0$ and $K \in (1- \frac{c^2}{4},1) $, the fixed point $(0,0,K,0)$ of $S_d$ is Lyapunov-stable, and trajectories asymptotically converge along $e_{2,3,4}$ \eqref{Eq:Eigenvectors_perturbed}.
\end{corollary}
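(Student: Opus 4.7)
The plan is to read off the signs of the eigenvalues \eqref{Eq:Eigenvalues_perturbed} for $K \in (1 - c^2/4, 1)$, apply the topological linearization provided by Proposition \ref{Prop:Linearization_Perturbed}, and transport the trivial Lyapunov stability of the rectified linear flow \eqref{Eq:Asymptotics_linear} back to $S_d$ via the conjugating homeomorphism.

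First I would verify the hypotheses of Proposition \ref{Prop:Linearization_Perturbed}: the assumption $K \in (1-c^2/4, 1)$ excludes precisely the two degenerate values $K=1$ and $K=1-c^2/4$, so the linearization applies. The chain of inequalities $0 < c^2/4 + K - 1 < c^2/4$ implies $0 < \sqrt{c^2/4 + K - 1} < c/2$, whence $\lambda_3, \lambda_4 \in \mathds{R}$ with $\lambda_3 < \lambda_4 < 0$; trivially $\lambda_2 = -c/d < 0$, while $\lambda_1 = 0$ is the center eigenvalue whose eigenspace $e_1$ is tangent to the continuum of fixed points \eqref{Eq:Cont_deg_fixed_points}. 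Hence three of the four eigenvalues are strictly negative.

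In the rectified coordinates $(p,u,v,w)$ the linear flow \eqref{Eq:Asymptotics_linear} is explicit: $p(x) \equiv p(0)$ while $u, v, w$ decay exponentially at rates $\lambda_2, \lambda_3, \lambda_4$. Thus for any $\epsilon' > 0$, if the initial data satisfy $\max\{|p(0)|, |u(0)|, |v(0)|, |w(0)|\} < \epsilon'$, then the same bound persists for all $x \geq 0$, with $(u(x), v(x), w(x)) \to (0,0,0)$ exponentially. This is Lyapunov stability of the origin for the linear system, with convergence to the fixed point $(p(0),0,0,0)$ on the center manifold.

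Finally I would transport these facts back to $S_d$: by Proposition \ref{Prop:Linearization_Perturbed} there is a homeomorphism $\Phi$ from a neighborhood $U$ of $(0,0,K,0)$ to a neighborhood $V$ of the origin in $(p,u,v,w)$-coordinates. For any small $\epsilon > 0$ with $B_\epsilon \subset U$, continuity of $\Phi^{-1}$ yields $\epsilon' > 0$ such that $\Phi^{-1}\{|p|,|u|,|v|,|w| < \epsilon'\} \subset B_\epsilon$, and continuity of $\Phi$ yields $\delta > 0$ such that $\Phi(B_\delta) \subset \{|p|,|u|,|v|,|w| < \epsilon'\}$. Combined with the linear invariance of the $\epsilon'$-box, every trajectory starting in $B_\delta$ stays in $B_\epsilon$ for all $x \geq 0$ and converges to $\Phi^{-1}(p(0),0,0,0)$, which lies on the continuum \eqref{Eq:Cont_deg_fixed_points}. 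The main subtlety is that $\Phi$ is only a topological conjugacy, so "convergence along $e_{2,3,4}$" is not immediate from the rectified decay; I would resolve this by invoking the invariant stable manifold theorem at the limiting fixed point, whose smooth stable manifold is three-dimensional with tangent space $\mathrm{span}(e_2,e_3,e_4)$ and contains the tail of the trajectory, giving the asserted asymptotic direction.
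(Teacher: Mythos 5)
Your proposal is correct and follows the same route as the paper, whose proof is one line: "Holds by \eqref{Eq:Asymptotics_linear}, since $\lambda_2, \lambda_3, \lambda_4$ are real-valued and strictly negative." The one genuine refinement you add is to observe that the conjugacy supplied by Proposition \ref{Prop:Linearization_Perturbed} is only a homeomorphism, which transfers Lyapunov stability and convergence to the line of fixed points but not the asymptotic direction of approach; you fill that gap by appealing to the smooth (center-)stable manifold tangent to $\mathrm{span}(e_2,e_3,e_4)$ at the limiting equilibrium, a step the paper leaves implicit. That is worth keeping: without it, the clause "trajectories asymptotically converge along $e_{2,3,4}$" does not follow from the topological linearization alone.
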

\begin{proof}
Holds by \eqref{Eq:Asymptotics_linear}, since $\lambda_2, \lambda_3, \lambda_4$ are real-valued and strictly negative.
\end{proof}

\subsubsection{Continuity with respect to the parameters}

Regarding the asymptotics as $x \rightarrow - \infty$, we get
\begin{corollary} \label{Cor:Asymptotic_direction_unstable}
For $d > 0, c>0, K>1$, the fixed point $(0,0,K,0)$ of $S_d$ has a fast unstable manifold of dimension one, associated to the Eigenvalue
\begin{align}
\lambda_4 = -\frac{c}{2} + \sqrt { \frac{c^2}{4} + K -1 } > 0.
\end{align}
This manifold has one branch such that $a,i >0$ asymptotically, which we denote as $M^-_d(K)$. Outgoing from the fixed point, the asymptotic direction of the unstable manifold in coordinates $(a,a',i,i')$ is given by
\begin{align}
\begin{pmatrix}
c\lambda_4 + d \cdot \lambda_4^2 \\
c \cdot \lambda_4^2 + d \cdot \lambda_4^3 \\
-(K+r) \\
-\lambda_4 (K+r)
\end{pmatrix}.
\end{align}
Locally, the manifold $M^-_d(K)$ is continuous in $K$.
\end{corollary}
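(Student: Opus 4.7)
The plan is to extract the corollary directly from Proposition \ref{Prop:Linearization_Perturbed}. First I verify its hypotheses at the fixed point $(0,0,K,0)$: since $K>1$ and $c>0$, we have both $K\neq 1$ and $K>1-c^2/4$, so the proposition applies. Consequently, in a neighborhood of this fixed point, $S_d$ is conjugate to the diagonal linear flow \eqref{Eq:Asymptotics_linear} with eigenvalues $\lambda_1=0$, $\lambda_2=-c/d<0$, $\lambda_3=-c/2-\sqrt{c^2/4+K-1}<0$, and $\lambda_4=-c/2+\sqrt{c^2/4+K-1}$. The assumption $K>1$ makes $\sqrt{c^2/4+K-1}>c/2$, so $\lambda_4>0$ is the unique eigenvalue with positive real part. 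Hence the unstable manifold of the fixed point is one-dimensional and, by the classical unstable manifold theorem (or directly from the conjugacy), tangent at the fixed point to the eigenvector $e_4$ given in \eqref{Eq:Eigenvectors_perturbed}.

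Next I identify the correct branch. The unstable manifold has two branches leaving the fixed point, along $+e_4$ and $-e_4$. Reading off the formula for $e_4$, its $a$-component equals $-c\lambda_4-d\lambda_4^2<0$ (since $\lambda_4>0$), so the branch along which $a>0$ for $x$ close to $-\infty$ is the one tangent to $-e_4$; this is exactly the direction listed in the statement of the corollary. For the same branch, the $i$-component of $-e_4$ equals $-(K+r)<0$, so $i(x)\to K$ from below along $M^-_d(K)$, and since $K>1>0$ we have $i>0$ throughout a neighborhood of the fixed point. Thus $M^-_d(K)$ is well defined as the asymptotically positive branch.

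For the local continuity in $K$, I would appeal to the standard smooth dependence of local invariant manifolds of a hyperbolic direction on parameters: both $\lambda_4(K)$ and $e_4(K)$ are smooth in $K$ on $(1,\infty)$, and the spectral gap between $\lambda_4$ and the remaining eigenvalues is locally uniform in $K$, so the unstable manifold varies continuously with $K$ in the $C^0$ topology of graphs over the tangent line. I do not foresee a real obstacle here: the entire corollary is essentially a bookkeeping exercise once Proposition \ref{Prop:Linearization_Perturbed} is in hand, the only genuinely necessary observation being the sign of $(e_4)_a$, which selects the correct of the two branches.
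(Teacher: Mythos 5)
The paper states this corollary without a separate proof, leaving it as an immediate consequence of Proposition \ref{Prop:Linearization_Perturbed}; your argument supplies exactly the bookkeeping the paper leaves implicit and is correct. You check the hypotheses of the proposition, identify $\lambda_4>0$ as the unique unstable eigenvalue, select the branch by the sign of the $a$-component of $e_4$ (so the outgoing direction is $-e_4$, matching the vector in the statement), observe that the $i$-component of $-e_4$ is $-(K+r)<0$ so $i$ approaches $K>1$ from below and stays positive near the fixed point, and invoke smooth parameter dependence of the unstable manifold for the local continuity in $K$ — all of which is the intended reasoning.
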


The local continuity of $M^-_d(K)$ w.r.t $K$ can be extended to arbitrarily finite segments:

\begin{corollary} \label{Cor:Unstable_manifold_linearization}
Let $d > 0, K_0>1, c>0$ and choose any finite time-horizon $T \in \mathds{R}$. Assume that the manifold $M^-_d(K_0,x)\vert_{x \in (- \infty, T]}$ exists, is smooth and bounded. Then, for $K$ sufficiently close to $K_0$, each of the manifolds $M^-_d(K,x)\vert_{ x \in (- \infty, T]}$ is smooth and bounded, and they have a representation that is continuous in $K$.
\end{corollary}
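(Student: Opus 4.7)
The plan is to glue together two ingredients: the local continuity of the unstable manifold near the fixed point $(0,0,K_0,0)$, which follows from Corollary \ref{Cor:Asymptotic_direction_unstable} and the standard parameter-dependent unstable manifold theorem, with the classical theorem on continuous dependence of ODE solutions on initial data and parameters over compact intervals. A trajectory on $M^-_d(K,\cdot)$ lives on the non-compact interval $(-\infty,T]$, but it spends all but a bounded portion of its life in an arbitrarily small neighborhood of the fixed point, where the linearization from Proposition \ref{Prop:Linearization_Perturbed} controls the flow uniformly in $K$. Since $K_0 > 1$, the degenerate thresholds $K = 1$ and $K = 1 - c^2/4$ are avoided for $K$ in a small neighborhood $U$ of $K_0$, so the linearization applies with coefficients depending smoothly on $K$.

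Concretely, I would fix a small $\epsilon > 0$ and a ball $\mathcal{V}$ of radius $\epsilon$ around $(0,0,K_0,0)$ on which, for every $K \in U$, the linear description of Proposition \ref{Prop:Linearization_Perturbed} and the estimate of Corollary \ref{Cor:Asymptotic_direction_unstable} are valid with constants uniform in $K$. Using the translation invariance of the autonomous system $S_d$ to remove the arbitrary time origin, I parametrize $M^-_d(K,\cdot)$ so that its trajectory exits $\mathcal{V}$ at time $x = 0$ along a fixed transversal to the asymptotic direction $e_4(K)$ from \eqref{Eq:Eigenvectors_perturbed}. Smooth dependence of the eigenpair $(\lambda_4(K), e_4(K))$ on $K$, combined with the parameter-dependent unstable manifold theorem, then yields that the base point $M^-_d(K,0)$ and the entire segment $\{M^-_d(K,x)\}_{x \leq 0}$ depend continuously on $K \in U$; indeed the latter admits the representation
\begin{align*}
M^-_d(K,x) = (0,0,K,0) + e^{\lambda_4(K)\, x}\bigl(e_4(K) + o(1)\bigr), \qquad x \leq 0,
\end{align*}
with the $o(1)$ term uniform in $K$ as $x \to -\infty$.

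For $x \in [0,T]$, the trajectory solves the polynomial (hence smooth) ODE $S_d$ with an initial condition $M^-_d(K,0)$ that is continuous in $K$. Since $M^-_d(K_0,\cdot)$ is bounded on $[0,T]$ by hypothesis, the classical theorem on continuous dependence of ODE solutions on initial data and parameters guarantees that $M^-_d(K,\cdot)$ exists, stays bounded, and depends continuously on $K$ on the compact interval $[0,T]$, provided $K$ is close enough to $K_0$. Splicing the two pieces at $x=0$ produces the desired representation on all of $(-\infty,T]$. The only real obstacle is the bookkeeping at the matching point: one must verify that the choice of transversal section and time origin is compatible across $K$, which is the content of the parameter-dependent unstable manifold theorem and is what makes the $o(1)$ remainder uniform. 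Once that is in place, the rest is a routine application of Gronwall on a compact interval.
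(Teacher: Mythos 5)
Your argument is essentially the same gluing scheme the paper uses: parametrize away the translation invariance by fixing a reference point near the fixed point, invoke the local parameter-continuity of the unstable manifold (Corollary~\ref{Cor:Asymptotic_direction_unstable} and Proposition~\ref{Prop:Linearization_Perturbed}) on $(-\infty,0]$, and then apply continuous dependence on initial data (Gronwall) over the compact remainder $[0,T]$. The paper fixes the time origin via the exit time from a small ball around the fixed point rather than via a transversal section, but this is a purely cosmetic bookkeeping choice; the decomposition, the key lemmas invoked, and the logical structure are identical.
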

\begin{proof}
The proof is a standard gluing argument: Fix some $K_0 >1$ and a finite time-horizon $T \in \mathds{R}$. Assume that the manifold $M^-_d(K_0,x)\vert_{x \in (- \infty, T]}$ exists, is smooth and bounded. Consider a small ball $B_\delta$ of radius $\delta >0$ around the fixed point $(0,0,K_0,0)$, such that within $B_\delta$, the flow is equivalent to the linearized Flow \eqref{Eq:Asymptotics_linear}. Define the exit time
\begin{align}
x_\delta := \sup_{ x \in \mathds{R} } \big \{ \forall s \leq x : \, M^-_d(K_0,s) \in B_\delta  \big \}, \qquad
x^\ast := \frac{x_\delta}{2}.
\end{align}
As $K \rightarrow K_0$, the trajectories $M^-_d(K,x)\vert_{x \in (- \infty, x^\ast]}$ are continuous in $K$, due to the local statement \ref{Cor:Unstable_manifold_linearization}. In particular, the points $M^-_d(K,x^\ast)$ converge to $M^-_d(K_0,x^\ast)$. We can now treat the rest of the trajectories
\begin{align}
M^-_d(K,x)\vert_{x \in [x^\ast, T]}
\end{align}
as initial value problems with converging initial data. Since $[x^\ast, T]$ is a finite time-interval, this follows from a Gronwall estimate for locally Lipschitz systems, check for example Theorem II-1-2 and Remark II-1-3 in the textbook of Hsieh and Yasutaka \cite{Hsieh_Sibuya_Basic_Theory_ODEs}.
\end{proof}

Similarly, continuity of $M^-_d(K_0,x)\vert_{x \in (- \infty, T]}$ w.r.t. $d$ holds. The result for $d \gg 0$ is standard: local continuity follows from center manifold theory, see e.g. section 1.5 in the monograph of J. Carr \cite{Carr_Center_Manifold}. Is is one of the fundamental tools for analyzing bifurcations, as explained by J. Guckenheimer and P. Holmes, see sections 3.2 and 3.4 in \cite{Guckenheimer_Dynamical}. The assumptions that the Eigenvectors \eqref{Eq:Eigenvalues_perturbed} are distinct and that the Eigenvalues $\lambda_{2,3,4}$ are real-valued and non-zero are again crucial: they imply that locally in $K$ and $d$, no bifurcation occurs. The local statement can easily be extended to arbitrary finite segments as before:

\begin{proposition} \label{Prop:Epsilon_continuity}
Let $K>1, d_0 >0, c >0$. For a finite time-horizeon $T \in \mathds{R}$, assume that the manifold $M^-_{d_0}(K,x) \vert _{ x \in (-\infty,T] }$ is smooth and bounded. There exists an open interval $I \subset \mathds{R}^+_0, d_0 \in I$, such that for all $d \in I$, each of the manifolds $M^-_d(K,x)\vert_{ x \in (- \infty, T]}$ is smooth and bounded, and they have a representation that is continuous in $d$.
\end{proposition}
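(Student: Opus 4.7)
The plan is to mimic the gluing argument used for Corollary \ref{Cor:Unstable_manifold_linearization}, now with $d$ replacing $K$ as the parameter. The crucial observation is that for $d_0 > 0$ fixed, none of the Eigenvalues $\lambda_2 = -c/d$, $\lambda_3$, $\lambda_4$ in \eqref{Eq:Eigenvalues_perturbed} bifurcate as $d$ varies in a small neighborhood of $d_0$: all three stay real, distinct, and bounded away from zero, and likewise the associated Eigenvectors \eqref{Eq:Eigenvectors_perturbed} depend continuously on $d$ and remain linearly independent. Hence the hypotheses of Proposition \ref{Prop:Linearization_Perturbed} hold uniformly for $d$ near $d_0$, and the set $\{a=a'=i'=0\}$ serves as a natural, $d$-independent center manifold.

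First, I would apply a parametrized version of the local linearization of Proposition \ref{Prop:Linearization_Perturbed}: there exist a ball $B_\delta$ around $(0,0,K,0)$ and an open neighborhood $I_0 \ni d_0$ such that for every $d \in I_0$ the flow inside $B_\delta$ is conjugate to the diagonal linear flow \eqref{Eq:Asymptotics_linear}, with a conjugacy depending continuously on $d$. In particular the unit asymptotic direction of the unstable manifold (given by $e_4(d)$) is continuous in $d$, so the germ of $M^-_d(K)$ emerging from the fixed point — parametrized by a natural variable along $e_4(d)$ — is continuous in $d$ when expressed in the original coordinates.

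Second, by assumption the reference trajectory $M^-_{d_0}(K,x)$ is smooth and bounded on $(-\infty, T]$, so it lies in $B_\delta$ for all $x \leq x_\delta$ for some $x_\delta$ determined by the exit time; set $x^\ast := x_\delta/2$. By the first step, after possibly shrinking $I_0$, the trajectories $M^-_d(K,x)\vert_{x \leq x^\ast}$ all lie in $B_\delta$ and depend continuously on $d$ uniformly on compact subintervals, so in particular $M^-_d(K,x^\ast) \to M^-_{d_0}(K,x^\ast)$ as $d \to d_0$.

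Third, on the finite interval $[x^\ast, T]$, I would treat the remaining segment as an initial value problem for \eqref{Eq:perturbed_wave_ODE}. Because the vector field is jointly smooth in the state and in $d$ for $d$ bounded away from zero, continuous dependence on both initial data and parameter follows from a standard locally Lipschitz Gronwall estimate (as in Theorem II-1-2 of \cite{Hsieh_Sibuya_Basic_Theory_ODEs}). Combining the two segments gives a representation of $M^-_d(K,x)\vert_{x \in (-\infty, T]}$ that is continuous in $d$, and smoothness and boundedness persist on the compact segment for $d$ sufficiently close to $d_0$. The only delicate step is the first: ensuring that the linearization can be made uniform in the parameter $d$. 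This is where the explicit structure of the continuum of fixed points is essential — it plays the role of a parameter-independent center manifold — and it is precisely the singular limit $d \to 0$, where $\lambda_2 = -c/d$ escapes to $-\infty$ and the phase-space dimension effectively collapses, that must be excluded from $I$ and is treated separately via geometric singular perturbation in Appendix \ref{Sec:Geom_sing_appl}.
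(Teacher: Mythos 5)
Your proposal is correct and follows essentially the same route as the paper: local continuity in $d$ via the parametrized center-manifold / linearization argument of Proposition \ref{Prop:Linearization_Perturbed} (valid uniformly near $d_0>0$ because the Eigenvalues $\lambda_{2,3,4}$ stay real, distinct, and nonzero, so no bifurcation occurs), glued to a finite-time continuous-dependence estimate à la Corollary \ref{Cor:Unstable_manifold_linearization}. The paper leaves the proof at the level of references and the remark that the extension to finite segments works "as before"; your write-up simply makes that gluing explicit, correctly identifying the singular limit $d\to 0$ (where $\lambda_2=-c/d\to-\infty$) as the case that must be excluded here and handled separately by the F\'enichel argument.
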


For passing from $d = 0$ to $d \sim 0$, we use geometric singular perturbation theory. The ODE-system with $d=0$ is three-dimensional, as it is independent of $i'$, but can be embedded into the $\mathds{R}^4$, and then be perturbed smoothly when introducing a small diffusion $d > 0$. The resulting statement is analogue to Proposition \ref{Prop:Epsilon_continuity}, the proof is presented in Appendix \ref{Sec:Geom_sing_appl}:

\begin{corollary} \label{Cor:Cont_finite}
Let $K>1, c >0, r \geq 0$. First consider the fixed point $(\bar{a},\bar{a}',\bar{i}) = (0,0,K)$ of $S_0$, together with its one-dimensional unstable manifold $M^-_0(K)$. Fix any semi-open interval $ x \in (-\infty, T]$, where $T$ is finite, and assume that $M^-_0(K,x)\vert_{x \in (-\infty, T]}$ is smooth and bounded. Lift it naturally into $\mathds{R}^4$ via the fourth coordinate $i'=-a(a+i+r) /c$.

Now consider the perturbed system $S_d$. There exist some $d^\ast > 0$ such that for all $d \in (0,d^\ast)$: the fixed point $(a,a',i,i') = (0,0,K,0)$ has an adjacent one-dimensional unstable manifold $M^-_d(K,x)\vert_{x \in (-\infty, T]}$, that is continuous in $d$ and converges to $M^-_0(K,x)\vert_{x \in (-\infty, T]}$ as $d \rightarrow 0$.
\end{corollary}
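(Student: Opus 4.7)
\textbf{Proof proposal for Corollary \ref{Cor:Cont_finite}.}

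The plan is to apply Fenichel's geometric singular perturbation theory, as presented in \cite{Jones_Singular_Perturbation}, treating $\epsilon = d$ as the small parameter. Writing $S_d$ as the slow--fast system
\begin{align*}
a' &= b, \\
b' &= a(a+i) - a - cb, \\
i' &= j, \\
\epsilon \, j' &= -cj - ra - a(a+i),
\end{align*}
the critical manifold obtained at $\epsilon=0$ is the graph
\begin{equation*}
M_0 = \Big\{(a,b,i,j) \in \mathds{R}^4 \,:\, j = -\tfrac{1}{c}\big[ra + a(a+i)\big]\Big\},
\end{equation*}
on which the induced slow flow is precisely $S_0$, once $S_0$ is lifted to $\mathds{R}^4$ through the prescribed fourth coordinate. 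The fast subsystem in the $j$-direction has linear part $-c/\epsilon$, so for $c>0$ the manifold $M_0$ is normally hyperbolic and attracting.

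First I would fix a compact neighbourhood $N$ of the set $M^-_0(K,x)\vert_{x \in (-\infty,T]}\cup \{(0,0,K,0)\}$, which is available by the assumed smoothness and boundedness of $M^-_0(K)$ together with the exponential convergence of the orbit to the fixed point as $x\to-\infty$. Fenichel's first theorem then yields, for all $\epsilon \in (0,\epsilon_1)$ and some $\epsilon_1 > 0$, a locally invariant attracting slow manifold $M_\epsilon \cap N$ which is the graph $j = h(a,b,i;\epsilon)$ of a smooth function with $h(\,\cdot\,;0) = -[ra+a(a+i)]/c$, and which converges to $M_0\cap N$ in $C^1$ as $\epsilon\downarrow 0$.

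The next step is to identify $M^-_d(K)$ with an orbit lying on $M_\epsilon$. The fixed point $(0,0,K,0)$ belongs to $M_\epsilon$ for every $\epsilon\geq 0$ since $h(0,0,K;\epsilon)=0$. Among the eigenvalues \eqref{Eq:Eigenvalues_perturbed} only $\lambda_2 = -c/\epsilon$ is fast; the others coincide as $\epsilon \downarrow 0$ with the eigenvalues of $S_0$ at $(0,0,K)$, and inspection of $e_4$ in \eqref{Eq:Eigenvectors_perturbed} shows that this unstable eigenvector is tangent to $T_{(0,0,K,0)}M_\epsilon$: it differs from the lifted unstable eigenvector of $S_0$ only by the $O(\epsilon)$ term $-d\lambda_4^2$ in its first component, compatible with $j = h(a,b,i;\epsilon)$. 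Being tangent to $e_4$ at the fixed point and flow-invariant, the one-dimensional $M^-_d(K)$ is contained in $M_\epsilon$ throughout $N$.

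Finally I would restrict the flow of $S_d$ to $M_\epsilon$, using $(a,b,i)$ as graph coordinates; in these coordinates the dynamics is a smooth $\epsilon$-perturbation of $S_0$ on a $3$-dimensional manifold. I would then choose some $x^\ast \leq T$ such that $M^-_0(K,x)\in N$ for $x\leq x^\ast$ and such that, on $(-\infty, x^\ast]$, the trajectory lies in a neighbourhood where Proposition \ref{Prop:Linearization_Perturbed} applies uniformly in $\epsilon$; the linearization then gives convergence of the tail $M^-_d(K,x)\vert_{x\le x^\ast}$ to $M^-_0(K,x)\vert_{x\le x^\ast}$ together with convergence of the crossing point $M^-_d(K,x^\ast) \to M^-_0(K,x^\ast)$. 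On the finite window $[x^\ast, T]$, a standard Gronwall estimate on the perturbed slow flow (as used in Proposition \ref{Prop:Epsilon_continuity}) transports this convergence up to time $T$.

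The main obstacle is the second step: confirming that the perturbed unstable manifold does not merely shadow $M_\epsilon$ but actually lies on it near the fixed point, and controlling it up to the Fenichel exit. Once this tangency-plus-invariance argument is in place, the remaining reasoning reduces to the already-regular perturbation theory of Proposition \ref{Prop:Epsilon_continuity} applied on the slow manifold.
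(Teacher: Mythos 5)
Your proposal is correct and follows essentially the same route as the paper's proof in Appendix~\ref{Sec:Geom_sing_appl}: cast $S_d$ in slow--fast form with $\epsilon = d$, identify the critical manifold $M_0$ given by $j = -[ra+a(a+i)]/c$, verify it is normally hyperbolic and attracting (fast eigenvalue $-c$), invoke F\'enichel to get a nearby slow manifold $M_\epsilon$ carrying an $\mathcal{O}(d)$-perturbation of $S_0$, observe that the fixed point and its one-dimensional unstable manifold must lie on $M_\epsilon$, and then reduce the remaining convergence statement to the regular perturbation argument of Proposition~\ref{Prop:Epsilon_continuity}. The one place where you add an explicit step the paper leaves terse is the verification that the unstable eigenvector $e_4$ is tangent to $T_{(0,0,K,0)}M_\epsilon$; the paper simply asserts that the unstable manifold must lie in $\bar{\mathcal{M}_d}$ because there is no fast unstable direction, which is the cleaner version of the same observation (backward orbits leaving the slow manifold transversally cannot accumulate at a fixed point that sits on it).
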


\subsection{Persistence of traveling waves under perturbation}
\label{Sec:Properties}

We will not only prove the existence of non-negative traveling waves of $S_d$, but also that they all share certain monotonicity properties. We consider only the case $c \geq 2$, since this is regime in which an invading front can exist.\\

\hypertarget{Ass_A}{\textbf{Properties of traveling waves that are not invading fronts (TW)}}: Let $K >1, d \geq 0, c >0$. Consider the manifold $M^-_{d}(K)$, and denote representing functions $a(x), a'(x), i(x), i'(x) \vert_{ x \in \mathds{R}}$. We say that $M^-_{d}(K)$ admits the traveling wave properties \textbf{(TW)} if the following holds:
\begin{enumerate}
\item $i'(x) < 0 \quad \forall x \in \mathds{R}$,
\item $a(x) >0 \quad \forall x \in \mathds{R}$,
\item $i(x) \geq \ipinf >0 \quad \forall x \in \mathds{R}$,
\item The function $a(x)$ has a unique local maximum, which is also the global one. At the phase-time of the maximum, it holds that $a+i \leq 1$.
\item The trajectory converges monotonously to its limit as $x \rightarrow + \infty$. There exists a finite phase-time $x^\ast$, such that for all $x \geq x^\ast$:
\begin{align}
\begin{aligned}
a'(x) < 0, \quad
i''(x) > 0. \label{Eq:Tail_monotonicity_Ass}
\end{aligned}
\end{align}
\end{enumerate}

An invading front fulfills the same properties, with the exception that $\lim_{x \rightarrow + \infty}i(x)  =0$. However, the assumption $i(x) \geq \gamma >0 $ allows us to perturb the trajectory in such a way that the perturbed solutions stay non-negative. The properties \hyperlink{Ass_A}{\textbf{TW}} have been proven for the non-negative traveling waves of $S_0$, as part of the proof of Theorem \ref{Old_main_Theorem}, see \cite{Kreten2022}. We prove that a given traveling wave persists under small perturbations in $d$:

\begin{proposition} \label{Thm:Non_neg_existance_small_eps}
For $c \geq 2, d_0 \in [0, 3c/2)$ and $K \in (1,2]$, assume that the manifold $M^-_{d_0}(K)$ admits the wave-properties \hyperlink{Ass_A}{\textbf{TW}}. Then, there exists an open interval $I \subset \mathds{R}^+_0, d_0 \in I$, such that for all $d \in I$: the manifold $M^-_d(K)$ also admits \hyperlink{Ass_A}{\textbf{TW}}.
\end{proposition}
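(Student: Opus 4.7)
The plan is to split $\mathds{R}$ into a compact middle $[T_-, T_+]$ and two semi-infinite tails, transferring the five conditions of \hyperlink{Ass_A}{\textbf{TW}} from $d_0$ to a neighborhood $I \ni d_0$ piece by piece. On the middle, Proposition \ref{Prop:Epsilon_continuity} (together with Corollary \ref{Cor:Cont_finite} when $d_0 = 0$) produces an open $I$ on which $M^-_d(K, x)\vert_{x \in [T_-, T_+]}$ depends continuously on $d$, so all strict inequalities on this compact set pass from $d_0$ to every $d \in I$ after possibly shrinking $I$. The two tails require the local linearizations at the two fixed points, provided by Proposition \ref{Prop:Linearization_Perturbed}, Corollary \ref{Cor:Lyapunov_stable} and Corollary \ref{Cor:Asymptotic_direction_unstable}.

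Write $K_+^{d_0} := \lim_{x \to +\infty} i(x) \in (0,1)$. Since $c \geq 2$ we have $1 - c^2/4 \leq 0 < K_+^{d_0} < 1$, so the hypotheses of Proposition \ref{Prop:Linearization_Perturbed} apply at the right fixed point $(0, 0, K_+^{d_0}, 0)$ with eigenvalues $\lambda_2, \lambda_3, \lambda_4 < 0$; the flow of $S_d$ on a small ball $B_+$ about this point is conjugate to the linear flow \eqref{Eq:Asymptotics_linear}, uniformly for $d$ near $d_0$. Using (TW5) and Corollary \ref{Cor:Lyapunov_stable}, I fix a finite $T_+ > x^\ast$ at which $M^-_{d_0}(K, T_+)$ lies well inside $B_+$ with sign pattern $a > 0, a' < 0, i - K_+^{d_0} > 0, i' < 0$. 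Symmetrically, Corollary \ref{Cor:Asymptotic_direction_unstable} provides a small ball $B_-$ about the left fixed point $(0, 0, K, 0)$ and a finite $T_- < x^\ast$ such that, for every $d$ close to $d_0$, $M^-_d(K, x)\vert_{x \leq T_-}$ remains in $B_-$ and emerges along the unstable eigenvector $e_4$ of \eqref{Eq:Eigenvectors_perturbed} with sign pattern $a > 0, a' > 0, i - K < 0, i' < 0$, consequences of $\lambda_4 > 0$ and $K + r > 0$.

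On the compact middle $[T_-, T_+]$, continuity in $d$ directly transfers $i' < 0$, $a > 0$, and $i > K_+^{d_0}$, as well as the simplicity and location of the maximum of $a$: at the maximum $a' = 0$, so the ODE \eqref{Eq:perturbed_wave_ODE} yields $a'' = a(a+i-1) < 0$ by (TW4), and the implicit function theorem produces a unique continuous maximum for $d \in I$ with $a + i < 1$ there. On the left tail, the asymptotic emergence along $e_4$ gives $a > 0$, $i' < 0$, and $i > K > K_+^d$ throughout $(-\infty, T_-]$ for every $d \in I$. On the right tail, the entry point $M^-_d(K, T_+)$ depends continuously on $d$ and lies in $B_+$; the conjugacy to \eqref{Eq:Asymptotics_linear} propagates its eigencoordinates monotonically to the stable fixed point $(0, 0, K_+^d, 0)$ with $K_+^d \to K_+^{d_0}$, the last convergence coming from continuity of the center coordinate $p$ of the entry point. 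Since $M^-_{d_0}(K)$ realizes the full sign pattern on $[T_+, \infty)$ and each of $a, a', i - K_+^d, i', i''$ is a linear combination of the exponentials $e^{\lambda_j (x - T_+)}$ with coefficients continuous in $d$, the signs persist for $d$ in a possibly smaller open subinterval of $I$, completing the verification.

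The main obstacle is the joint bookkeeping on the right tail: each of $a, a', i - K_+^d, i', i''$ is expressed through the basis \eqref{Eq:Eigenvectors_perturbed} as a sum of three exponentials with rates $\lambda_2, \lambda_3, \lambda_4$ and coefficients of mixed signs, so monotonicity in $x$ is not automatic. The saving grace is that $e_4$ is the dominant direction, since $\lambda_2 = -c/d$ is far more negative than $\lambda_{3,4}$ and $|\lambda_3| > |\lambda_4|$; the leading-order signs along $e_4$ (using $\lambda_4 < 0$, $K+r > 0$, and $-c\lambda_4 - d\lambda_4^2 > 0$, which yield $a > 0, a' < 0, i - K_+^d > 0, i' < 0$, and after insertion into \eqref{Eq:perturbed_wave_ODE} also $i'' > 0$) govern any sufficiently far tail, while continuity in $d$ on any compact $[T_+, T_+ + N]$ handles the interpolating segment.
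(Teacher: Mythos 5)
Your decomposition into a compact middle plus two tails, and your handling of the middle (continuity from Proposition \ref{Prop:Epsilon_continuity} / Corollary \ref{Cor:Cont_finite}) and the left tail (emergence along the unstable eigenvector $e_4$ from Corollary \ref{Cor:Asymptotic_direction_unstable}), are all sound and broadly match the paper's use of Proposition \ref{Prop:Finite_non_neg} and Lemma \ref{Lem:A_phase_maximum}. The genuine gap is on the right tail.

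First, your sign-propagation argument treats $a, a', i - K_+^d, i', i''$ as if they were literal linear combinations of the exponentials $e^{\lambda_j(x - T_+)}$. Proposition \ref{Prop:Linearization_Perturbed} only gives a topological conjugacy of the flow to the linear flow \eqref{Eq:Asymptotics_linear} near the fixed point, so the Cartesian coordinates are nonlinear, and merely continuous, functions of the eigen-coordinates. The dominance of $e_4$ then controls the asymptotic direction of approach, but it does not let you read off the sign of $i''$ (or even $a'$) on the whole of $[T_+, \infty)$ from the linear model; the interpolating segment you defer to ``continuity on $[T_+, T_+ + N]$'' has no $d$-independent $N$. Second, and more fundamentally, the hypothesis allows $d_0 = 0$. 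Near $d = 0$ the eigenvalue $\lambda_2 = -c/d \to -\infty$, so the Hartman--Grobman neighbourhood and the Lyapunov-stability constants of Corollary \ref{Cor:Lyapunov_stable} are not uniform in $d$ and can collapse as $d \downarrow 0$; choosing a single $T_+$ and ball $B_+$ that work for all $d$ in an open interval about $0$ is exactly the singular-perturbation issue the paper relegates to Appendix \ref{Sec:Geom_sing_appl}, and that appendix only yields continuity on finite segments $(-\infty, T]$, not at the tail.

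The paper's route avoids both problems. It never linearizes on the right tail; instead it uses the trapping estimate in Proposition \ref{Triangles_3D_Thm} and the monotonicity Lemmas \ref{Lem:I_monotonicity}--\ref{Lem:Ip_monotonicity} to derive the integral bound of Corollary \ref{Cor:Ip_monotonicity}, namely $c\,\vert i_d(x_1) - i_d(x_2)\vert \leq d\,\vert i'_d(x_1)\vert + J\, a_d(x_1)$ for $x_2 \geq x_1 \geq x^\ast$, with $J$ uniform for $d$ near $d_0$ (including $d_0 = 0$). Choosing $x_1$ large on the reference trajectory makes the right-hand side as small as desired, and continuity on the finite segment transfers this smallness to $M^-_d(K)$, giving a lower bound for $i_d(x_2)$ on the entire tail; this is where the hypothesis $d_0 < 3c/2$ is actually consumed, see \eqref{Eq:Positivity_proof_final_bound}. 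This argument is a global a priori estimate rather than a local normal-form one, which is what makes it robust across the whole parameter range $d_0 \in [0, 3c/2)$.
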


The rest of Section \ref{Sec:Properties} is devoted to the proof of Proposition \ref{Thm:Non_neg_existance_small_eps}. The required phase-space analysis is not relevant for the rest of the paper and can be skipped at first reading, we recommend to continue with Section \ref{Sec:Mapping_Limits}.

\subsubsection{Monotonicity, non-negativeness, and an attractor}

Our analysis begins with the fact that $i(x)$ must be monotone as long as the trajectory is non-negative:

\begin{lemma} \label{Lem:I_monotonicity}
Let $K  > 1, c>0$ and $d > 0$. Along the manifold $M^-_d(K,x)$, the inequality
\begin{align}
i'(x) < 0
\end{align}
holds as long as $a(x) >0, i(x) \geq 0$.
\begin{proof}
By Corollary \ref{Cor:Unstable_manifold_linearization}: $i'(x) < 0$ as $x \rightarrow - \infty$. We assume that there exists a finite phase-time $x^\ast$ such that $i'(x^\ast) = 0$ for the first time. This implies that $i(x^\ast) '' \geq 0$, since $i'(x)<0$ for all $x < x^\ast$. However, it also holds that $d i'' = -ci' - ra - a(a+i)$, which implies that $i''(x^\ast) <0$ if $i(x^\ast) \geq 0$ and $a(x^\ast) >0$. Thus, there can not be such a finite time $x^\ast$.
\end{proof}
\end{lemma}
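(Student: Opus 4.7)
The plan is a standard contradiction argument based on the sign structure of the ODE for $i$. First I would use Corollary \ref{Cor:Asymptotic_direction_unstable} to establish the base case: outgoing from the fixed point $(0,0,K,0)$, the asymptotic direction of $M^-_d(K)$ has fourth component $-\lambda_4(K+r)$, which is strictly negative since $K>1$ implies $\lambda_4 > 0$ and $K+r > 0$. Hence $i'(x) < 0$ for all sufficiently negative $x$.

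Next I would argue by contradiction. Suppose there is a smallest finite phase-time $x^\ast \in \mathds{R}$ such that $i'(x^\ast) = 0$, while $a(x^\ast) > 0$ and $i(x^\ast) \geq 0$ still hold (we only need to argue within the regime where these positivity conditions are satisfied). Since $i'(x) < 0$ for every $x < x^\ast$ and $i'(x^\ast) = 0$, the function $i'$ is non-decreasing at $x^\ast$, so necessarily $i''(x^\ast) \geq 0$.

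Now I would read off $i''(x^\ast)$ directly from the fourth line of System \eqref{Eq:perturbed_wave_ODE}:
\begin{align*}
d \cdot i''(x^\ast) = -c\,i'(x^\ast) - r\,a(x^\ast) - a(x^\ast)\bigl(a(x^\ast) + i(x^\ast)\bigr) = -a(x^\ast)\bigl[r + a(x^\ast) + i(x^\ast)\bigr].
\end{align*}
Using $a(x^\ast) > 0$, $i(x^\ast) \geq 0$, $r \geq 0$ and $d > 0$, the right-hand side is strictly negative, so $i''(x^\ast) < 0$. This directly contradicts the inequality $i''(x^\ast) \geq 0$ obtained from minimality of $x^\ast$, finishing the proof.

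I do not expect a serious obstacle here: the only subtlety is ensuring the initial sign from the unstable manifold is correctly extracted, which is already packaged in Corollary \ref{Cor:Asymptotic_direction_unstable}, and that the ODE for $i''$ does not contain any hidden term that could flip sign when $i'=0$, which it does not.
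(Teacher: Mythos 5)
Your proof is correct and follows essentially the same contradiction argument as the paper: establish $i'<0$ near $-\infty$ from the asymptotic direction of the unstable manifold, take the first time $i'$ vanishes, and derive a sign contradiction from the ODE $d\,i''=-ci'-ra-a(a+i)$. The only cosmetic difference is that you cite Corollary \ref{Cor:Asymptotic_direction_unstable} for the initial sign of $i'$ (which gives the explicit component $-\lambda_4(K+r)<0$), whereas the paper cites Corollary \ref{Cor:Unstable_manifold_linearization}; your choice is the more directly relevant reference.
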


Since we do not change the equation for $a(x)$ in Eq. \eqref{Eq:perturbed_wave}, the following result - that traps $a(x)$ within a non-negative region - can be taken over from the unperturbed system. This statement relies on an analysis of the subsystem for $a(x)$ for fixed $i(x) = K$, and on the fact that $i(x)$ is monotone. We consider only wave-speeds $c \geq 2$, to simplify the representation:

\begin{proposition}[c.f. Thm. 6.1 and Prop. 6.4 in \cite{Kreten2022}] \label{Triangles_3D_Thm}
For $c \geq 2, r \geq 0$ and $d \geq 0$, consider a solution of the Wave-Eq. \eqref{Eq:perturbed_wave_ODE}, that at time $x=0$ is subject to the conditions
\begin{align}
a(0) & >0,\\
 a'(0)& = 0, \\
a(0) + i(0) & \leq 1.
\end{align}
Assume further that there exists some $T \in (0, +\infty]$ such that \begin{align}
i (x) \geq 0, \, i'(x) \leq 0 \quad \forall x \in [0, T).
\end{align}
Then, $a(x)$ is trapped in a non-negative attractor. It holds that
\begin{align}
a(x) > 0 \text{ and } a'(x) < 0 \qquad  \forall x \in (0, T).
\end{align}
Moreover, if for some $\kappa \in (0,1)$ and $x_\kappa \in (0, T)$, it holds that
\begin{align}
a(x_\kappa) + i (x_\kappa) = 1-\kappa,
\end{align}
then there exists a finite constant $L_\kappa \geq 0$, that depends only on $\kappa$, such that
\begin{align}
\int_{s_1}^{s_2} a(x) \, dx &\leq L_\kappa \cdot a(s_1) \quad \forall \, 0 \leq s_1 \leq s_2 \leq T. \label{Ineq:A_integral_bound}
\end{align}
\end{proposition}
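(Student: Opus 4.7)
The proof is a phase-plane argument in the $(a, a')$ subsystem, treating $i(x)$ as a monotone non-negative driving input through the hypothesis $i \geq 0$, $i' \leq 0$. Writing $b := a'$, the planar subsystem is $a' = b$, $b' = a(a+i-1) - cb$. Since $c \geq 2$, the quadratic $\mu^2 - c\mu + 1$ has real positive roots $\mu_\pm = \tfrac12(c \pm \sqrt{c^2-4})$ satisfying $\mu_\pm(c-\mu_\pm) = 1$ and $\mu_+ \leq c$. I set $R := \{(a,b): a \geq 0,\ -\mu_+ a \leq b \leq 0\}$ and check forward invariance: on the top edge $b = 0$, $b' = a(a+i-1) \leq 0$ whenever $a + i \leq 1$; on the slanted edge $b = -\mu_+ a$,
\[
(b + \mu_+ a)' = a(a+i-1) + \mu_+(c-\mu_+)\, a = a(a+i) \geq 0.
\]
The constraint $a + i \leq 1$ is self-sustaining inside $R$, since $a' \leq 0$ there and $i' \leq 0$ by hypothesis, so $(a+i)' \leq 0$. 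Uniqueness of ODEs prevents the trajectory from reaching the fixed vertex $(0,0)$ in finite time, giving $a(x) > 0$ on $(0, T)$. Strict negativity $a'(x) < 0$ on $(0, T)$ follows by perturbing off the initial point $(a(0), 0)$: $b'(0) = a(0)(a(0)+i(0)-1) \leq 0$, with strict inequality unless $a(0)+i(0) = 1$, in which case $b''(0) = a(0)\, i'(0) < 0$, because Lemma~\ref{Lem:I_monotonicity} yields $i'(0) < 0$ strictly from $a(0) > 0$ and $i(0) \geq 0$.

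For the integral bound I introduce the Lyapunov-type quantity $\psi(x) := a(x) + a'(x)/c$. The trap yields $a' \geq -\mu_+ a \geq -c\, a$, so $\psi \geq 0$, while $a'(s_1) \leq 0$ gives $\psi(s_1) \leq a(s_1)$. A direct computation using the ODE gives the clean identity
\[
\psi'(x) \;=\; \tfrac1c\, a(x)\,\bigl(a(x) + i(x) - 1\bigr) \;\leq\; 0,
\]
so $\psi$ is non-increasing. Integrating from $s_1$ to $s_2$ produces
\[
\int_{s_1}^{s_2} a\,\bigl(1 - (a+i)\bigr)\, dx \;=\; c\,\bigl(\psi(s_1) - \psi(s_2)\bigr) \;\leq\; c\, a(s_1).
\]
Since $a + i$ is non-increasing and equals $1 - \kappa$ at $x_\kappa$, one has $1 - (a+i) \geq \kappa$ throughout $[x_\kappa, T)$, which immediately yields $\int_{s_1}^{s_2} a\, dx \leq (c/\kappa)\, a(s_1)$ whenever $s_1 \geq x_\kappa$.

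The main obstacle is the early interval $s_1 \in [0, x_\kappa)$, where $1 - (a+i)$ can be arbitrarily close to zero and the identity above loses its teeth. The plan is to split the integral at $x_\kappa$; the late piece is controlled as above using $a(x_\kappa) \leq a(s_1)$ by monotonicity of $a$. For the early piece I exploit the $i$-equation together with the elementary bound $i(s_1) - i(x_\kappa) \leq (a+i)(s_1) - (a+i)(x_\kappa) \leq \kappa$: integrating the $i$-equation (the first-order expression for $i'$ when $d = 0$, or the two-point form of the second-order equation when $d > 0$) rewrites this drop as an integral involving $a\,(a+i+r)$, and since $a+i$ stays in $[1-\kappa, 1]$ on this interval, the factor multiplying $a$ is uniformly bounded above and below by constants depending only on $\kappa, c, d, r$. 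To ensure the resulting constant does not degenerate as $a(s_1) \downarrow 0$, the estimate is matched against the monotonicity of $\psi$, so that $\int_{s_1}^{x_\kappa} a\, dx \lesssim \psi(s_1) \leq a(s_1)$ up to a $\kappa$-dependent factor, yielding the claimed $L_\kappa$.
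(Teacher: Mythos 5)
The paper does not actually prove this proposition; it is imported by citation from Thm.~6.1 and Prop.~6.4 of \cite{Kreten2022}, so your attempt can only be judged on its own merits. The trapping part of your argument is essentially sound: the region $R=\{a\geq 0,\ -\mu_+a\leq b\leq 0\}$ with $\mu_+(c-\mu_+)=1$, the edge computations, and the backward-uniqueness argument at the vertex are all correct. Two small repairs are needed there: your appeal to Lemma~\ref{Lem:I_monotonicity} for strict $i'(0)<0$ is not justified, since that lemma concerns the specific manifold $M^-_d(K)$ with $d>0$, while here (for $d>0$) $i'(0)$ is free initial data and $i'(0)=0$ is compatible with the hypotheses; in the doubly degenerate case $a(0)+i(0)=1$, $i'(0)=0$ one must go to third order ($b'''(0)=a(0)i''(0)<0$ since $d\,i''(0)=-a(0)(a(0)+i(0)+r)$), and for $d=0$ one should use the explicit formula $i'=-a(a+i+r)/c$ rather than the lemma. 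You also never exclude interior zeros of $a'$ on $(0,T)$; this needs the extra observation that $a'(x_0)=0$ with $a+i<1$ contradicts $a'\leq 0$ just before, while $a+i=1$ at $x_0$ forces $a+i\equiv 1$, hence $a,i$ constant on $[0,x_0]$, contradicting the $i$-equation.

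The genuine gap is the integral bound for $s_1<x_\kappa$, exactly the piece you flag as ``the main obstacle'' and then only sketch. Your proposed closing step $\int_{s_1}^{x_\kappa}a\,dx\lesssim_\kappa \psi(s_1)$ does not follow from monotonicity of $\psi$: the exact identity is $c\bigl(\psi(s_1)-\psi(x_\kappa)\bigr)=\int_{s_1}^{x_\kappa}a\,\bigl(1-(a+i)\bigr)dx$, and on $[s_1,x_\kappa]$ the weight $1-(a+i)$ is precisely the quantity that can be arbitrarily close to zero on most of the interval, so it cannot be removed at the price of a constant depending only on $\kappa$. The $i$-equation route you describe also falls short: it yields $\int_{s_1}^{x_\kappa}a\,dx\leq \frac{c}{1-\kappa}\bigl(i(s_1)-i(x_\kappa)\bigr)\leq \frac{c\,\kappa}{1-\kappa}$ (plus, for $d>0$, boundary terms $\tfrac{d}{c}\vert i'\vert$ from the second-order equation that you do not control by $a(s_1)$), i.e.\ a bound by a constant, not by $L_\kappa\,a(s_1)$. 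Trying to close a self-consistent estimate from $\kappa-u(s_1)\leq a(s_1)+\frac{1+r}{c}\int_{s_1}^{x_\kappa}a\,dx$ (with $u=1-a-i$) together with $\int_{s_1}^{x_\kappa}a\,dx\leq\frac{c}{1-\kappa}(\kappa-u(s_1))$ fails because the absorption factor $\frac{1+r}{1-\kappa}\geq 1$. What is missing is a genuinely new ingredient: a quantitative statement that a further descent of $a+i$ from its value at $s_1$ all the way down to $1-\kappa$ \emph{costs} an amount of $a$ comparable to $a(s_1)$ (so that smallness of $a(s_1)$ precludes the existence of $x_\kappa>s_1$ unless $\kappa-u(s_1)\lesssim_\kappa a(s_1)$); this is the substance of the cited Prop.~6.4 in \cite{Kreten2022} and is absent from your proposal. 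Note that the part you did prove, $s_1\geq x_\kappa$ with $L_\kappa=c/\kappa$ via $\psi$, is the only case the present paper actually uses (in Cor.~\ref{Cor:Ip_monotonicity} the bound is applied with $s_1\geq x^\ast\geq x_\kappa$), but it does not establish the proposition as stated.
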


Lemma \ref{Lem:I_monotonicity} and Proposition \ref{Triangles_3D_Thm} imply that along $M^-_d(K,x)$, we only have prove that $a(x)$ reaches a local maximum, and that $i(x) \geq 0$ for all $x \in \mathds{R}$, then convergence and non-negativeness follow. As $i'(x)$ essentially depends on $a(x)$, Inequality \eqref{Ineq:A_integral_bound} will be handy for proving that $i'(x) \rightarrow 0$. For $S_0$, we did this in chapters 6.3 and 6.4 of \cite{Kreten2022}. For $S_d$, the new term $di''$ needs to be dealt with, see the following Section \ref{Sec:Tail_convergence}. Before, we conclude this section with another simple phase space argument:

\begin{lemma} \label{Lem:Non_negativeness}
Let $K  > 1, c>0, r \geq 0$ and $d > 0$. Consider the manifold $M^-_d(K,x)$. If there exists some $T \in \mathds{R}$ such that
\begin{align}
i(x) & \geq 0 \quad  \forall x \in (-\infty,T],
\intertext{then $a(x)$ has at most one local maximum, say at some phase-time $x_0 \in (-\infty,T]$. There, it holds that $a(x_0)+i(x_0) \leq 1$. Moreover:}
a(x) & > 0, \, i'(x) < 0 \quad \forall x \in (-\infty,T].
\end{align}
If $T = +\infty$, then both $a(x)$ and $i(x)$ converge and stay non-negative.
\begin{proof}
The manifold $M^-_d(K)$ leaves the fixed point in positive direction of $a$ and negative direction of $i$. We have already proven that $i'(s)<0$ as long as $a>0, i \geq 0$. Assume that $a(s)$ has a first local maximum at some $x_0 \in \mathds{R}$. There, it holds that
\begin{align}
0 \geq a''(x_0) = a(x_0) \cdot \big (a(x_0)+i(x_0) - 1 \big ),
\end{align}
which implies that $a(x_0)+i(x_0)\leq 1$, since $a(x_0) >0$. But since $i(s) \geq 0$ for all $s \in (-\infty,x]$, we can apply Proposition \ref{Triangles_3D_Thm} to trap $a(s)\vert_{x \in (x_0,x]}$: along this part of the trajectory, it holds that $a(x) >0$ and $a'(x) <0 $.
\end{proof}
\end{lemma}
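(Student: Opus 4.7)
The plan is to weave together the asymptotic emergence of $M^-_d(K,x)$ from the fixed point, the sign of $i'$ provided by Lemma \ref{Lem:I_monotonicity}, and the trapping result Proposition \ref{Triangles_3D_Thm}, which pins $a$ inside a positive attractor once a local maximum is reached. By Corollary \ref{Cor:Asymptotic_direction_unstable}, the trajectory leaves the fixed point $(0,0,K,0)$ along the eigendirection $e_4$ with $a,a' > 0$ and $i > K$, $i' < 0$ for $x$ sufficiently negative. Lemma \ref{Lem:I_monotonicity} immediately extends the inequality $i'(x) < 0$ to the entire set on which $a(x) > 0$ and $i(x) \geq 0$ hold simultaneously, so the whole argument reduces to controlling the sign of $a$.

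First I would show that $a(x) > 0$ throughout $(-\infty, T]$. Suppose, for contradiction, that $\tau \in (-\infty, T]$ is the first phase-time at which $a$ vanishes. Since $a > 0$ on $(-\infty, \tau)$, and since $a'(x) > 0$ asymptotically but $a'(\tau) \leq 0$, continuity of $a'$ yields some $x_0 \in (-\infty, \tau)$ with $a'(x_0) = 0$ and $a(x_0) > 0$ — a local maximum of $a$. The wave equation forces $0 \geq a''(x_0) = a(x_0)\bigl(a(x_0) + i(x_0) - 1\bigr)$, so $a(x_0) + i(x_0) \leq 1$. The hypothesis $i(x) \geq 0$ on $(-\infty, T]$, together with $i'(x) \leq 0$ on $[x_0, T]$ supplied by Lemma \ref{Lem:I_monotonicity}, satisfies exactly the assumptions of Proposition \ref{Triangles_3D_Thm} at phase-time $x_0$: it yields $a(x) > 0$ and $a'(x) < 0$ for all $x \in (x_0, T]$, contradicting $a(\tau) = 0$. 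Hence $a > 0$ on $(-\infty, T]$, and consequently $i'(x) < 0$ there as well.

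The same computation now runs without contradiction at any genuine local maximum $x_0 \in (-\infty, T]$ of $a$: one obtains $a(x_0) + i(x_0) \leq 1$ at $x_0$, and Proposition \ref{Triangles_3D_Thm} forces $a'(x) < 0$ strictly on $(x_0, T]$, which precludes a second local maximum. This delivers the ``at most one local maximum'' clause, together with the inequality $a + i \leq 1$ at that maximum. The main technical point to organize carefully is the Rolle-type step in the contradiction argument above, so that it produces a local maximum \emph{before} the putative zero of $a$ without double-counting with the subsequent trapping argument; apart from this bookkeeping, each step is a short consequence of the already-established tools.

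Finally, for $T = +\infty$: $i$ is strictly decreasing on $\mathds{R}$ and bounded below by $0$, hence converges to some $\ipinf \geq 0$. If $a$ possesses its unique local maximum at $x_0$, then Proposition \ref{Triangles_3D_Thm} gives $a'(x) < 0$ on $(x_0, \infty)$ with $a$ bounded below by $0$, so $a$ converges as well. In the remaining scenario where $a$ admits no local maximum, $a$ is monotonically increasing, but a convergent $i$ forces the $a''$-equation to drive $a$ across the threshold $a + i = 1$, which produces a maximum via the argument of the previous paragraph; thus this case reduces to the previous one.
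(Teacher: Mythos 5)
Your proposal is correct and takes essentially the same route as the paper: the emergence of the manifold from Corollary \ref{Cor:Asymptotic_direction_unstable}, the monotonicity $i'<0$ from Lemma \ref{Lem:I_monotonicity}, the computation $0 \geq a''(x_0) = a(x_0)\bigl(a(x_0)+i(x_0)-1\bigr)$ at the first critical point of $a$, and the trapping via Proposition \ref{Triangles_3D_Thm} are exactly the ingredients of the paper's proof. The differences are only organizational --- you extract positivity of $a$ by contradiction at a putative first zero and spell out the $T=+\infty$ convergence (which the paper leaves implicit) --- rather than a genuinely different argument.
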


\subsubsection{The tail of a perturbed trajectory}
\label{Sec:Tail_convergence}

For the entire Section \ref{Sec:Tail_convergence}, we work under the assumption that a reference trajectory $M^-_{d_0}(K)$ exists for some $d_0  \geq 0$, which we perturb when changing $d \sim d_0$:\\

\textbf{Assumption}: Let $K>1, c \geq 2, r \geq 0$ and $d_0 \geq 0$. Assume that the manifold $M^-_{d_0}(K)$ admits properties \hyperlink{Ass_A}{\textbf{TW}}, and choose four representing functions
\begin{align}
\bar{a}(x), \bar{a}'(x), \bar{i}(x), \bar{i}'(x), \quad x \in \mathds{R}.
\end{align}

Given $M^-_{d_0}(K)$, we vary the parameter $d$ and track the perturbed trajectories. Therefore, we denote the representing functions of the manifolds $M^-_{d}(K)$ as
\begin{align}
a_d(x), a'_d(x), i_d(x), i'_d(x), \quad x \in \mathds{R},
\end{align}
to emphasize their dependency on $d$.

The results of Proposition \ref{Prop:Epsilon_continuity} and Corollary \ref{Cor:Cont_finite} are structurally similar: they yield continuity of arbitrarily large, but finite time-horizons of $M^-_d(K)$, when varying $d \geq 0$, resulting in the following Proposition \ref{Prop:Finite_non_neg}. It remains to control the tail as $x \rightarrow + \infty$ (for the result, see Prop. \ref{Thm:Non_neg_existance_small_eps}: the wave proverties \hyperlink{Ass_A}{\textbf{TW}} persist for $d$ close to $d_0$).

\begin{proposition} \label{Prop:Finite_non_neg}
Let the manifold $M^-_{d_0}(K)$ be as above and choose a finite time-horizon $T \in \mathds{R}$. For all $\epsilon>0$, there exists an open interval $I \subset \mathds{R}^+_0, d_0 \in I$, such that for all $d \in I$: the manifold $M^-_d(K,x)\vert_{x \in (-\infty,T]}$ is of distance at most $\epsilon$ to $M^-_{d_0}(K,x)\vert_{x \in (-\infty,T]}$ and is strictly non-negative.
\begin{proof}
For the reference trajectory, note that there exists a $\gamma >0$ such that
\begin{align}
\bar{i}(x) \geq \gamma \quad \forall x \in (-\infty,T].
\end{align}
If $d_0 = 0$, apply Corollary \ref{Cor:Cont_finite}, if $d_0>0$ apply Proposition \ref{Prop:Epsilon_continuity}, both yield continuity in $d$: for all $\epsilon>0$, there exists an open interval $I \subset \mathds{R}^+_0, d_0 \in I$ such that for all $d \in I$:
\begin{align}
\big \vert \big \vert M^-_d(K,x) - M^-_{d_0}(K,x) \big \vert \big \vert_\infty & \leq \epsilon \quad &&\forall x \in (-\infty,T].
\intertext{In particular, for $\epsilon \leq \gamma /2 $:}
i_d(x) \geq \bar{i}(x) - \epsilon &\geq \gamma - \gamma /2  > 0 \quad &&\forall x \in (-\infty,T].
\end{align}
By Lemma \ref{Lem:Non_negativeness}, this implies also positiveness of $a_d(x)\vert_{x \in (-\infty,T]}$.
\end{proof}
\end{proposition}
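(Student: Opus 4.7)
The plan is to reduce the statement to the two finite-horizon continuity results already available (Corollary \ref{Cor:Cont_finite} and Proposition \ref{Prop:Epsilon_continuity}) and then use Lemma \ref{Lem:Non_negativeness} to upgrade closeness of $i_d$ to strict positivity of $a_d$. The only preparation needed is to extract from the reference trajectory a uniform strict positivity margin for $\bar i$ on the closed time-horizon. Since $M^-_{d_0}(K)$ admits \hyperlink{Ass_A}{\textbf{TW}}, property 3 gives $\bar i(x)\geq \bar i_{+\infty}>0$ for every $x\in\mathds{R}$, so in particular
\begin{align*}
\gamma := \inf_{x\in(-\infty,T]} \bar i(x) \geq \bar i_{+\infty} > 0.
\end{align*}

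Next I would invoke the appropriate continuity-in-$d$ result. If $d_0>0$, Proposition \ref{Prop:Epsilon_continuity} yields an open interval $I\ni d_0$ such that for every $d\in I$, $M^-_d(K,\cdot)\vert_{(-\infty,T]}$ is represented by smooth, bounded functions depending continuously on $d$; taking $I$ smaller if necessary, the sup-distance between $M^-_d(K,\cdot)$ and $M^-_{d_0}(K,\cdot)$ on $(-\infty,T]$ is less than any prescribed threshold. If $d_0=0$, the same conclusion follows from Corollary \ref{Cor:Cont_finite}, which handles the singular passage from $d=0$ to $d\sim 0$ via geometric singular perturbation theory. In either case, for a given $\epsilon>0$, shrink $I$ so that
\begin{align*}
\big\lVert M^-_d(K,\cdot)-M^-_{d_0}(K,\cdot)\big\rVert_{L^\infty(-\infty,T]} \;\leq\; \min\!\big\{\epsilon,\,\gamma/2\big\}\qquad \forall\, d\in I.
\end{align*}

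This immediately gives the first conclusion (the $\epsilon$-closeness), and the choice $\epsilon\leq \gamma/2$ forces $i_d(x)\geq \bar i(x)-\gamma/2\geq \gamma/2>0$ for all $x\in(-\infty,T]$. Then Lemma \ref{Lem:Non_negativeness} applies verbatim on that interval and delivers strict positivity of $a_d$ together with monotonicity of $i_d$, establishing strict non-negativity of the perturbed trajectory. The only subtlety is the bookkeeping between the prescribed $\epsilon$ and the intrinsic margin $\gamma$: one has to apply the continuity statement with $\min(\epsilon,\gamma/2)$ rather than $\epsilon$ alone. No real obstacle arises here, since both continuity results are phrased precisely to accommodate an arbitrary uniform tolerance, and the positivity of $\gamma$ is built into the hypothesis that $M^-_{d_0}(K)$ satisfies \hyperlink{Ass_A}{\textbf{TW}}.
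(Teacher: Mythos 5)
Your proposal is correct and follows essentially the same route as the paper: extract the uniform margin $\gamma>0$ for $\bar i$ on $(-\infty,T]$ from \hyperlink{Ass_A}{\textbf{TW}}, invoke Corollary \ref{Cor:Cont_finite} (if $d_0=0$) or Proposition \ref{Prop:Epsilon_continuity} (if $d_0>0$) with tolerance at most $\gamma/2$, and conclude positivity of $a_d$ via Lemma \ref{Lem:Non_negativeness}. The $\min(\epsilon,\gamma/2)$ bookkeeping you flag is exactly the (implicit) step in the paper's own argument.
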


We trap $M^-_d(K)$ in the attractor from Proposition \ref{Triangles_3D_Thm}:

\begin{lemma} \label{Lem:A_phase_maximum}
Let $M^-_{d_0}(K)$ as before. There exists a constant $\kappa \in (0,1)$, an open interval $I_0 \subset \mathds{R}^+_0, d_0 \in I_0$, and a finite phase-time $x_\kappa$, such that for all $d \in I_0$:
\begin{enumerate}
\item the unstable manifold $M^-_d(K,x)\vert_{x \in (-\infty,x_\kappa]}$ is strictly positive and has a unique first local maximum of active particles at a finite phase-time $\tilde{x}_0(d) \in (-\infty, x_\kappa]$.
\item It holds that
\begin{align*}
a_d'(x_\kappa) < 0, \quad
a_d(x_\kappa) + i_d(x_\kappa) &\leq 1 - \kappa.
\end{align*}
\end{enumerate}

\begin{proof}
On $M^-_{d_0}(K,x)$, there exists a unique sharp global maximum of $\bar{a}(x)$, say at some phase-time $x_0$. Choose a phase-time $x_\kappa > \tilde{x}$ so large, such that
\begin{align}
\bar{a}(x_\kappa) + \bar{i}(x_\kappa) & \leq 1 - 2 \kappa, \quad \text{ for some $\kappa \in (0,1)$}, \\
\bar{a}'(x_\kappa) & \leq 2 \delta < 0, \quad \text{ for some $\delta >0$,}
\end{align}
which must exist since $M^-_{d_0}(K,x)$ admits \hyperlink{Ass_A}{\textbf{TW}}.

Apply the previous Proposition \ref{Prop:Finite_non_neg} over the interval $(-\infty, x_\kappa]$, such that we can control $M^-_d(K,x)\vert_{x \in (-\infty, x_\kappa]}$ for $d$ in some open interval $I \subset \mathds{R}^+_0$. We choose $\vert d-d_0 \vert$ sufficiently small such that
\begin{align}
i_d(x) & > 0 \quad \forall x \in (- \infty, x_\kappa], \\
a_d(x_\kappa) + i_d(x_\kappa) & \leq 1 - \kappa, \\
a_d'(x_\kappa) & \leq \delta < 0.
\end{align}
By Corollary \ref{Cor:Unstable_manifold_linearization}: $a_d'(x) >0$ as $x \rightarrow - \infty$. Hence, also $a_d(x)$ must have a local maximum before $x_\kappa$. It is unique since $i_d(x)\vert_{x \in (-\infty,x_\kappa]} > 0$ and by Lemma \ref{Lem:Non_negativeness}.
\end{proof}
\end{lemma}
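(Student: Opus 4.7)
The plan is to pick $x_\kappa$ deep in the decaying tail of the reference trajectory $M^-_{d_0}(K)$, where the two required scalar inequalities hold with a strict margin, and then transfer these strict inequalities to nearby trajectories using the finite-horizon continuity of Proposition \ref{Prop:Finite_non_neg}. Once $i_d$ is known to stay non-negative on the entire interval, the uniqueness of the first local maximum follows immediately from Lemma \ref{Lem:Non_negativeness}.

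First, I would locate $x_\kappa$ using only the reference wave. Property \hyperlink{Ass_A}{\textbf{TW}} supplies a unique global maximum of $\bar a$ at some $\bar x_0$ with $\bar a(\bar x_0)+\bar i(\bar x_0)\leq 1$ (item 4), strict monotonicity $\bar i'<0$ everywhere (item 1), and eventual strict decay $\bar a'<0$ together with monotone convergence in the tail (item 5). Combining these, for any $x>\bar x_0$ one has $\bar i(x)<\bar i(\bar x_0)\leq 1-\bar a(\bar x_0)<1$, so $\ipinf\leq \bar i(\bar x_0+1)<1$ and the sum $\bar a(x)+\bar i(x)$ converges from above to $\ipinf<1$. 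Hence for some $\kappa\in(0,1)$, $\delta>0$ and $x_\kappa$ sufficiently far past $\bar x_0$ one simultaneously has
\begin{align*}
\bar a(x_\kappa)+\bar i(x_\kappa)\leq 1-2\kappa, \qquad \bar a'(x_\kappa)\leq -2\delta.
\end{align*}

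Next, I would invoke Proposition \ref{Prop:Finite_non_neg} on the finite horizon $(-\infty,x_\kappa]$ with accuracy $\epsilon\leq\min(\kappa,\delta)$. This yields an open interval $I_0\ni d_0$ such that for every $d\in I_0$ the trajectory $M^-_d(K,x)|_{x\in(-\infty,x_\kappa]}$ exists, stays within $\epsilon$ of the reference in sup norm, and satisfies $i_d(x)>0$ throughout. The two strict scalar bounds at $x_\kappa$ then transfer with half the slack to
\begin{align*}
a_d(x_\kappa)+i_d(x_\kappa)\leq 1-\kappa, \qquad a_d'(x_\kappa)\leq -\delta<0,
\end{align*}
which is item 2 of the lemma.

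Finally, for item 1: Corollary \ref{Cor:Asymptotic_direction_unstable} gives $a_d'(x)>0$ as $x\to -\infty$, while the step above yields $a_d'(x_\kappa)<0$; by continuity $a_d'$ must vanish at some finite $\tilde x_0(d)\in(-\infty,x_\kappa)$, producing a local maximum of $a_d$ there. Since $i_d(x)\geq 0$ on the entire interval, Lemma \ref{Lem:Non_negativeness} applied with $T=x_\kappa$ both yields uniqueness of this local maximum and strict positivity of $a_d$ on $(-\infty,x_\kappa]$. I do not expect a substantial obstacle: all the ingredients come from the preceding results, and the only bookkeeping point is the strict inequality $\ipinf<1$ for the reference wave, without which the sum $\bar a+\bar i$ could not be forced below $1$ in the tail; as noted above, this is immediate from items 1 and 4 of \hyperlink{Ass_A}{\textbf{TW}}.
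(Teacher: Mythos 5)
Your proposal is correct and follows essentially the same route as the paper: pick $x_\kappa$ far in the decaying tail of the reference trajectory where the sum and derivative bounds hold with strict margin, transfer these to nearby $d$ via the finite-horizon continuity of Proposition~\ref{Prop:Finite_non_neg}, and obtain the unique local maximum from the sign change of $a_d'$ together with Lemma~\ref{Lem:Non_negativeness}. You make explicit the small derivation that $\ipinf<1$ (from items 1, 2, 4 of \hyperlink{Ass_A}{\textbf{TW}}), which the paper leaves implicit, and you cite Corollary~\ref{Cor:Asymptotic_direction_unstable} rather than Corollary~\ref{Cor:Unstable_manifold_linearization} for the asymptotic sign of $a_d'$ -- arguably the more precise reference; the one cosmetic slip is that with a componentwise sup-norm bound $\epsilon$, the sum $a_d+i_d$ can shift by $2\epsilon$, so you would want $\epsilon\leq\kappa/2$ rather than $\epsilon\leq\kappa$, but this is immaterial.
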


In view of Proposition \ref{Triangles_3D_Thm}, we now have trapped $M^-_d$ in a non-negative monotone attractor, as long as we can control $i_d \geq 0$. We first prove monotonicity of the tail as $x \rightarrow + \infty$:

\begin{lemma} \label{Lem:Ip_monotonicity}
Consider $M^-_{d_0}(K), I_0$ and $x_0$ as in Lemma \ref{Lem:A_phase_maximum}. There exists a phase-time $x_{tail} \geq x_\kappa$, such that for all $x^\ast \in ( x_{tail}, \infty)$:

There exists an open interval $I_1 \subset I_0, d_0 \in I_1$, such that for all $d \in I_1$:

\begin{align}
a_d(x^\ast),i_d(x^\ast) &>0, \nonumber  \\
 a_d'(x^\ast), i_d'(x^\ast) &< 0, \label{Eq:Monotonicity_tail_full} \\
 i_d''(x^\ast) &>0. \nonumber
\intertext{Moreover, given the existence of such an $x^\ast$, it holds for all $x \geq x^\ast$, and as long as $i_d(x) \geq 0$:}
i_d''(x) & > 0,
\end{align}
and thus all $a_d, a'_d,i_d,i'_d$ converge monotonously to zero.
\begin{proof}
The Inequalities \eqref{Eq:Monotonicity_tail_full} are true on the tail of $M^-_{d_0}$, say for all $x \geq x_1$, where we let $x_1 \geq x_\kappa$ without loss of generality. Now pick some $x_2 > x_1$. There exists an open interval $I_1 \subset I_0, d_0 \in I_1$, such that for all $d \in I_1$:

The manifold $M^-_d(K,x)\vert_{x \in (-\infty,x_2]}$ exists, is non-negative and converges to $M^-_0(K,x)\vert_{x \in (-\infty,x_2]}$ as $d \rightarrow d_0$. By continuity: for $\vert d -d_0 \vert$ sufficiently small and for all $x \in [x_1, x_2]$:
\begin{align}
a_d(x),i_d(x) >0, a'_d(x) <0, i'_d(x) <0.
\end{align}
On $[x_1, x_2]$: $\bar{i}''(x) > 0$, so $\bar{i}'(x_1) < \bar{i}'(x_2)$. Since $i'_d \rightarrow \bar{i}'$, there must also be some $x^\ast_d \in [x_1, x_2]$, such that $i_d''(x^\ast_d) >0$, for all $d$ such that $\vert d - d_0 \vert$ is sufficiently small.

We assume that there exists a finite time $x_3 \geq x^\ast_d$ such that $i''_d(x_3) = 0$ for the first time after $x^\ast$. This implies that $i'''_d(x_3) \leq 0$. On the other hand, deriving the equation for $i$ in \eqref{Eq:perturbed_wave} once yields
\begin{align}
\begin{aligned}
d \cdot i''' &= -(ci' + ra + a(a+i))' \\
&= -\big(ci'' + ra' + a'(a+i) + a(a'+i') \big) \\
&= -ra' - a'(a+i) - a(a'+i').
\end{aligned}
\end{align}
As long as $i_d \geq 0$, we can apply Lemmas \ref{Lem:Non_negativeness} and \ref{Lem:I_monotonicity}. Then, all three terms in the last line are strictly positive, resulting in $i_d'''(x_3) > 0$, contradicting our assumption. This allows to fix $x_{tail} = x_2$, independent of $d$.
\end{proof}
\end{lemma}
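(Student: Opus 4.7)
The plan is to combine the perturbation continuity of Proposition \ref{Prop:Finite_non_neg} with an ODE-level contradiction argument, in two stages.

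First, I would establish the existence of a suitable phase-time $x^\ast$ for the perturbed trajectories. Since the reference $M^-_{d_0}(K)$ admits \textbf{(TW)}, its tail is eventually strictly monotone: there is a phase-time $x_1 \geq x_\kappa$ beyond which the four strict inequalities $\bar{a} > 0$, $\bar{i} > 0$, $\bar{a}' < 0$, $\bar{i}' < 0$, $\bar{i}'' > 0$ all hold. Pick any $x_2 > x_1$ and apply Proposition \ref{Prop:Finite_non_neg} on the interval $(-\infty, x_2]$ to obtain an open interval $I_1 \subset I_0$ containing $d_0$ on which $M^-_d(K, \cdot)$ is $C^2$-close to $M^-_{d_0}(K, \cdot)$ (closeness up to second derivative is automatic, since the ODE \eqref{Eq:perturbed_wave_ODE} expresses $i_d''$ smoothly in terms of the first-order coordinates). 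Because strict inequalities are preserved under small perturbations, every $x^\ast \in [x_1, x_2]$ inherits the required pointwise strict inequalities, including $i_d''(x^\ast) > 0$, for $d \in I_1$. Setting $x_{\text{tail}} := x_1$ gives the first part of the lemma.

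Second, I would show that once the tail inequalities hold at $x^\ast$, the inequality $i_d''(x) > 0$ cannot be lost for any $x \geq x^\ast$ with $i_d(x) \geq 0$. Suppose for contradiction that $x_3 > x^\ast$ is the \emph{first} phase-time with $i_d''(x_3) = 0$. Then $i_d'''(x_3) \leq 0$, since $i_d''$ was strictly positive on $[x^\ast, x_3)$. On the other hand, differentiating the $i$-equation in \eqref{Eq:perturbed_wave_ODE} in $x$ yields
\begin{align*}
d \cdot i_d''' \;=\; -\, c\, i_d'' \;-\; r a_d' \;-\; a_d'(a_d + i_d) \;-\; a_d(a_d' + i_d').
\end{align*}
At $x_3$, the first term vanishes. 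As long as $i_d \geq 0$, Lemmas \ref{Lem:Non_negativeness} and \ref{Lem:A_phase_maximum} together with the attractor of Proposition \ref{Triangles_3D_Thm} (applied past the maximum $\tilde{x}_0(d) \leq x_\kappa \leq x^\ast$) guarantee $a_d(x_3) > 0$ and $a_d'(x_3) < 0$, and Lemma \ref{Lem:I_monotonicity} gives $i_d'(x_3) < 0$. Each of the remaining three terms is therefore strictly positive, so $d \cdot i_d'''(x_3) > 0$, contradicting $i_d'''(x_3) \leq 0$.

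The main obstacle is the bookkeeping needed to keep all sign assumptions valid at the putative bad time $x_3$: one must propagate $a_d > 0, a_d' < 0$ and $i_d' < 0$ past $x_\kappa$, which is exactly what the preceding lemmas and Proposition \ref{Triangles_3D_Thm} are designed for. Once $i_d'' > 0$ is locked in on $[x^\ast, \infty)$ (under the running hypothesis $i_d \geq 0$), $i_d'$ is strictly increasing from a negative value and stays $\leq 0$, so $i_d$ is monotonically decreasing and bounded below; boundedness and monotonicity together with the ODE then force $(a_d, a_d', i_d, i_d')$ to converge to a fixed point of the system, yielding the final monotone convergence statement of the lemma.
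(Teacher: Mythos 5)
Your second half — the contradiction argument for propagating $i_d'' > 0$ forward in $x$, via $d\,i_d''' = -r a_d' - a_d'(a_d+i_d) - a_d(a_d'+i_d')$ evaluated at a putative first zero $x_3$ of $i_d''$ — matches the paper's argument exactly, including which lemmas supply the sign conditions.

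The first half, however, has a genuine gap. You claim that $C^2$-closeness of $M^-_d(K)$ to $M^-_{d_0}(K)$ on a compact interval is ``automatic, since the ODE expresses $i_d''$ smoothly in terms of the first-order coordinates.'' For $d_0 > 0$ this is fine, because $i_d'' = -\tfrac{1}{d}\bigl(c\,i_d' + r a_d + a_d(a_d+i_d)\bigr)$ is jointly smooth in $(d, a_d, a_d', i_d, i_d')$ once $d$ is bounded away from zero, and $C^0$-closeness of the four coordinates (Proposition \ref{Prop:Finite_non_neg}) then transfers to $i_d''$. But the section explicitly allows $d_0 = 0$ (the Assumption at the head of Section \ref{Sec:Tail_convergence} reads $d_0 \geq 0$, and $d_0 = 0$ is the crucial base case of the whole construction). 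In that case the formula contains a $1/d$ factor, and the convergence $i_d''(x) \to \bar{i}''(x)$ is not a consequence of the $C^0$ statement in Proposition \ref{Prop:Finite_non_neg} or Corollary \ref{Cor:Cont_finite}; it would require a $C^1$ convergence of the reduced slow flow on the F\'enichel manifold, which those results do not assert. The paper sidesteps this entirely: it only uses $C^0$-closeness of $i_d'$, observes that $\bar{i}'' > 0$ on $[x_1, x_2]$ forces $\bar{i}'(x_1) < \bar{i}'(x_2)$, deduces $i_d'(x_1) < i_d'(x_2)$ for $d$ near $d_0$, and then invokes the mean value theorem to produce \emph{some} $x^\ast_d \in [x_1, x_2]$ with $i_d''(x^\ast_d) > 0$. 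Once such a point is in hand the bootstrap takes over. Correspondingly, the paper must take $x_{\text{tail}} = x_2$ (past the whole interval $[x_1, x_2]$, to absorb the unknown location of $x^\ast_d$), whereas your choice $x_{\text{tail}} = x_1$ is only licit under the unjustified $C^2$-closeness claim. The fix is simple — replace your pointwise-$i_d''$ argument by the paper's mean-value/monotonicity argument — but as written the step would fail at $d_0 = 0$.
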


\textit{Remark}: Note that we can choose $x^\ast$ from Lemma \ref{Lem:Ip_monotonicity} as large as we want, the price is a stronger restriction regarding $d \sim d_0$. This will be helpful if we assume that $M^-_{d_0}(K)$ converges: also the perturbed trajectories get as close to the limit as we need.\\

Given monotonicity of the tail, we can control $i_d(x)$ for $x \in [x^\ast, \infty)$:

\begin{corollary} \label{Cor:Ip_monotonicity}
Consider $M^-_{d_0}(K)$ and $I_1,x^\ast$ as in Lemma \ref{Lem:Ip_monotonicity}. There exists a finite constant $J \geq 0$, such that for all $d \in I_1$ and all phase-times $x_2 \geq x_1 \geq x^\ast$, the bound
\begin{align}
\begin{aligned}
c \cdot \big \vert \big ( i_d(x_1) - i_d(x_2) \big) \big \vert  \leq  d \cdot \vert i_d'(x_1) \vert + J \cdot a_d(x_1)
\end{aligned} \label{Eq:I_bound_by_tail_rest}
\end{align}
holds as long as $i_d(x) \geq 0$ for all $x \in ( - \infty, x_2]$.

\begin{proof}
Choose $x^\ast$ as in the previous Lemma \ref{Lem:Ip_monotonicity}. For $x_2 \geq x_1 \geq x^\ast$, integrate $0 = d i_d'' + c i_d' + a_d(a_d+i_d+r)$:
\begin{align}
c \cdot \big \vert  i_d(x_2) - i_d(x_1) \big \vert &= \big \vert  d \cdot \big (i_d'(x_2) - i_d'(x_1) \big )  + \int_{x_1}^{x_2} a_d(a_d+i_d+r) \, ds \,  \big \vert.
\intertext{As long as $i_d \geq 0$, Proposition \ref{Triangles_3D_Thm} and the previous Lemmas \ref{Lem:Non_negativeness}, \ref{Lem:I_monotonicity} and \ref{Lem:Ip_monotonicity} imply $a_d \geq 0 , a_d'\leq 0, i_d' \leq 0$ and $i_d'' \leq 0$. Thus, since $a_d+i_d+r \leq 1+r$:}
c \cdot \big \vert  i_d(x_2) - i_d(x_1) \big \vert & \leq
d \cdot \big \vert i_d'(x_1) \big \vert + (1+r) \cdot \int_{x_1}^{x_2} a_d(x) \, dx.
\intertext{By our bound $a_d(x_\kappa) + i_d(x_\kappa) \leq 1 - \kappa$, for some $x_\kappa \leq x^\ast$, Proposition \ref{Triangles_3D_Thm} implies that there exists a constant $L_\kappa \geq 0$ such that}
\int_{x_1}^{x_2} a(s) \, ds & \leq L_\kappa \cdot a_d(x_1) \qquad \forall x_2 \geq x_1 \geq x_0,
\end{align}
as long as $i_d(x) \geq 0$ for all $x \in (-\infty, x_2]$.
\end{proof}
\end{corollary}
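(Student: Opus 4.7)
The plan is to integrate the ODE for $i_d$ over $[x_1,x_2]$ and bound the resulting terms using the sign and monotonicity information already established in Lemmas~\ref{Lem:Non_negativeness}, \ref{Lem:I_monotonicity}, \ref{Lem:Ip_monotonicity}, together with the integral attractor estimate of Proposition~\ref{Triangles_3D_Thm}. Concretely, from $0 = d\, i_d'' + c\, i_d' + a_d(a_d + i_d + r)$, integrating from $x_1$ to $x_2$ yields
\begin{align*}
c\bigl(i_d(x_2) - i_d(x_1)\bigr) = -\,d\bigl(i_d'(x_2) - i_d'(x_1)\bigr) - \int_{x_1}^{x_2} a_d(s)\bigl(a_d(s) + i_d(s) + r\bigr)\,ds.
\end{align*}

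The next step is to bound each of the two terms on the right separately. For the boundary term, Lemma~\ref{Lem:Ip_monotonicity} gives $i_d'' > 0$ on $[x^\ast,x_2]$ as long as $i_d \geq 0$, so $i_d'$ is strictly increasing there; combined with $i_d'(x) < 0$ on the same interval (Lemma~\ref{Lem:I_monotonicity}), this gives $i_d'(x_1) < i_d'(x_2) \leq 0$, whence $|i_d'(x_2) - i_d'(x_1)| \leq |i_d'(x_1)|$. So the first term contributes at most $d\,|i_d'(x_1)|$, which already accounts for the $d\,|i_d'(x_1)|$ piece on the right-hand side of the claim.

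For the integral term, I plan to use that $x_1 \geq x^\ast \geq x_\kappa$ with $x_\kappa$ from Lemma~\ref{Lem:A_phase_maximum}, so $a_d(x_\kappa) + i_d(x_\kappa) \leq 1 - \kappa$. By Proposition~\ref{Triangles_3D_Thm} applied with this $\kappa$, there is a finite constant $L_\kappa$ with $\int_{x_1}^{x_2} a_d(s)\,ds \leq L_\kappa\, a_d(x_1)$, valid precisely under the standing assumption $i_d \geq 0$ on $(-\infty, x_2]$ (which makes $i_d' \leq 0$ active, as required by that proposition). Inside the same attractor one has $a_d + i_d \leq 1$, so $a_d + i_d + r \leq 1 + r$ pointwise, and hence
\begin{align*}
\int_{x_1}^{x_2} a_d(s)\bigl(a_d(s) + i_d(s) + r\bigr)\,ds \;\leq\; (1+r)\int_{x_1}^{x_2} a_d(s)\,ds \;\leq\; (1+r)\,L_\kappa\, a_d(x_1).
\end{align*}

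Setting $J := (1+r)\,L_\kappa$ and combining the two estimates via the triangle inequality produces the claimed bound, uniformly for $d \in I_1$. There is no genuine analytic obstacle here: the entire statement is a bookkeeping consequence of facts already proven in this subsection. The only point that deserves care is to confirm that the constant $\kappa$ from Lemma~\ref{Lem:A_phase_maximum}, the interval $I_1$ from Lemma~\ref{Lem:Ip_monotonicity}, and the phase-time $x^\ast$ are all chosen consistently so that the hypotheses of Proposition~\ref{Triangles_3D_Thm} (namely $a_d'(x_\kappa) < 0$ and $a_d(x_\kappa) + i_d(x_\kappa) \leq 1 - \kappa$) hold simultaneously for every $d \in I_1$; this is exactly what Lemma~\ref{Lem:A_phase_maximum} provides, so the constant $L_\kappa$, and therefore $J$, can indeed be taken independent of $d$.
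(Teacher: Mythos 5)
Your proof is correct and follows essentially the same route as the paper: integrate the ODE for $i_d$ over $[x_1,x_2]$, control the boundary term $d(i_d'(x_2)-i_d'(x_1))$ via the convexity of $i_d$ from Lemma~\ref{Lem:Ip_monotonicity}, bound $a_d + i_d + r \leq 1+r$ inside the attractor, and invoke the integral estimate of Proposition~\ref{Triangles_3D_Thm} to get $J = (1+r)L_\kappa$. Your handling of the boundary term is actually slightly more explicit than the paper's (which has an apparent sign typo writing $i_d'' \leq 0$ where the convexity $i_d'' > 0$ is what is actually used to get $|i_d'(x_2)-i_d'(x_1)|\leq |i_d'(x_1)|$).
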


We now prove the existence of a non-negative traveling wave for $d \sim d_0$, all that is left to be done is to prove convergence as $x \rightarrow + \infty$:

\begin{proof}[\textbf{Proof of Proposition \ref{Thm:Non_neg_existance_small_eps}}]

Consider a manifold $M^-_{d_0}(K)$ that fulfills \hyperlink{Ass_A}{\textbf{TW}}. Then, there exists a constant $\gamma >0$, such that $\bar{i}(x) \geq \gamma$ for all $x \in \mathds{R}$. We prove that for all $d$ sufficiently close to $d_0$:
\begin{align}
\vert i_d (x) - \bar{i}(x) \vert \leq \frac{\gamma}{2} && \forall x \in \mathds{R}.
\end{align}
This implies that $i_d(x) \geq \frac{\gamma}{2} >0$, and Lemma \ref{Lem:I_monotonicity} yields positiveness and convergence of $M^-_d(K)$. The rest of the wave properties \hyperlink{Ass_A}{\textbf{TW}} then follows by Lemma \ref{Lem:I_monotonicity}.

Let $x^\ast$ as in Corollary \ref{Cor:Ip_monotonicity}: there exists an open interval $ I_1 \subset \mathds{R}_0^+$ such that for all $d \in I_1$: the trajectory $M^-_d(K,x) \vert_{x \in (-\infty,x^\ast]}$ is non-negative, and there exists some $J \geq 0$ such that for all $x_2 \geq x_1 \geq x^\ast$:
\begin{align}
i_d(x_2) \geq i_d(x_1) - \frac{d}{c} \vert i'_d(x_1) \vert - \frac{J}{c} \cdot a_d(x_1), \label{Eq:Tail_bound_final_proof}
\end{align}
as long as $i_d \geq 0$. Now, choose $x_1 \geq x^\ast$ large enough such that both
\begin{align}
\frac{J}{c} \cdot \bar{a}(x_1) & \leq \delta : = \frac{\gamma}{7}, \label{Eq:Tail_bound_unp1} \\
 \vert\bar{a}(x_1)\vert + \vert\bar{a}'(x_1)\vert + \vert\bar{i}'(x_1)\vert + \vert \bar{i}(x_1) - \gamma\vert & \leq \delta : = \frac{\gamma}{7}, \label{Eq:Tail_bound_unp2}
\end{align}
which is possible since $M^-_{d_0}(K)$ converges. Then choose an even smaller open interval $I_2 \subset I_1, d_0 \in I_2$, such that by continuity: for all $d \in I_2$, $M^-_d(K,x)\vert_{x \in (- \infty, x_1]}$ is non-negative and of distance at most
\begin{align}
\epsilon = \frac{\gamma}{7(1+\frac{J}{C})}  \intertext{to $M^-_{d_0}(K,x)\vert_{x \in (- \infty, x_1]}$. Then, in view of Eq. \eqref{Eq:Tail_bound_unp1} and \eqref{Eq:Tail_bound_unp2}, also}
	\frac{J}{c} \cdot a_d(x_1) &\leq \frac{2 \gamma}{7} = 2 \delta, \\
\vert a_d(x_1) \vert + \vert a'_d(x_1) \vert + \vert i'_d(x_1) \vert & \leq \frac{ 2 \gamma}{7} = 2 \delta. \label{Eq:Tail_perturbed_small}
\end{align}
This implies that for all $d \in I_2$ and $x_2 \geq x_1$,  and by Eq. \eqref{Eq:Tail_bound_final_proof}:
\begin{align}
\begin{aligned}
i_d(x_2) & \geq i_d(x_1) - \frac{d}{c} \Big \vert i'_d(x_1) \Big \vert - \frac{J}{c} \cdot a_d(x_1) \\
& \geq \bar{i}(x_1) - \epsilon - \frac{d}{c} \Big \vert \bar{i}'(x_1) + \epsilon \Big \vert - 2 \delta \\
& \geq \gamma - \frac{\gamma}{7} -  \frac{d}{c} \cdot \Big \vert \frac{\gamma}{7} + \frac{\gamma}{7} \Big \vert - 2 \frac{\gamma}{7} \\
& \geq \frac{ 4\gamma}{7} - \frac{ d }{c} \cdot \frac{2\gamma}{7}. \label{Eq:Positivity_proof_final_bound}
\end{aligned}
\end{align}
The last line is strictly positive for $d < 3c/2$. Thus, $i_d(x_2) \geq 0$ for all $x_2 \geq x_1$, and the trajectory stays non-negative and converges monotonously to its limit, as claimed.
\end{proof}

\subsection{The mapping of the limits}
\label{Sec:Mapping_Limits}
In view of the previous Section, the wave properties \hyperlink{Ass_A}{\textbf{TW}} persist under small perturbations in $d$ as long as $\ipinf > 0$. However, we can relate the limits of any bounded and non-negative solution up to a term of order $\mathcal{O}(d)$:

\begin{proposition} \label{Lem:Epsilon_limit_bounds}
For $d > 0$, the two limits $\iminf$ and $\ipinf$ of any smooth, bounded and non-negative traveling wave (where $a \not \equiv 0$) fulfill
\begin{align}
2 - d \cdot \frac{2(r+1)}{c^2} < \iminf + \ipinf < 2 . \label{Eq:bound_iminf_ipinf}
\end{align}
\end{proposition}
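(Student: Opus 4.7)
My plan is to derive a single clean integral identity relating $\Sigma := \iminf + \ipinf$ to $\int_{\mathds R} a\, i' \, dx$, and then to read off both bounds by combining the obvious sign of $a\, i'$ with an extremum bound $a_{\max} < 1$. Throughout, set $I_1 := \int_{\mathds R} a \, dx > 0$ and note that $a$, $a'$, $i'$ all vanish at $\pm\infty$ while $i \to i_{\pm\infty}$, since any limit of a bounded non-negative traveling wave must be a fixed point of $S_d$ and all such fixed points lie on $\{a=0\}$.

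First, adding the two equations in \eqref{Eq:perturbed_wave} yields the combined identity $a'' + d\, i'' + c(a+i)' + (1+r)\, a = 0$, whose integration over $\mathds R$ gives the usual flux relation $c(\iminf - \ipinf) = (1+r)\, I_1$, while integrating only the $a$-equation produces the auxiliary identity $\int a(a+i)\, dx = I_1$.

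The main step is to multiply the combined identity by the antiderivative $\phi(x) := \int_{-\infty}^x a(y) \, dy$, which satisfies $\phi' = a$, $\phi(-\infty) = 0$, $\phi(+\infty) = I_1$, and integrate over $\mathds R$. Two integrations by parts collapse the $a''$ and $d\, i''$ contributions to $0$ and $-d \int a\, i' \, dx$ respectively; the transport term $c \int \phi (a+i)' \, dx$ reduces to $c\, I_1(\ipinf - 1)$ via the auxiliary identity; and $(1+r) \int \phi a \, dx = \tfrac{1+r}{2} I_1^2$ because $\phi' = a$ makes the integrand a total derivative. Substituting the flux relation, everything should collapse to the compact identity
\begin{align*}
d \int_{\mathds R} a(x) \, i'(x) \, dx \; = \; -\frac{c\, I_1}{2}\,(2 - \Sigma).
\end{align*}
The main obstacle is locating the right multiplier: symmetric candidates such as $a$, $i$, $a+i$ or $a + d\, i$ all yield tautologies once the flux relation has been invoked, and $\phi$ succeeds precisely because $\phi' = a$ converts the zeroth-order term into a total derivative while simultaneously eliminating the $a''$ contribution.

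Both bounds then follow directly. For the upper bound $\Sigma < 2$: on any bounded non-negative wave $i' \leq 0$ (any critical point of $i$ satisfies $d\, i''(x^\ast) \leq 0$ by the $i$-equation, preventing $i'$ from ever crossing zero from below), with strict inequality on a set of positive measure where $a > 0$; hence $\int a\, i' < 0$ strictly and the identity forces $2 - \Sigma > 0$. For the lower bound, evaluating the $a$-equation at the maximum $x_0$ of $a$ gives $a(x_0) + i(x_0) \leq 1$, and since $i(x_0) > 0$ ($i$ reaches its limit only at $+\infty$) we obtain $a_{\max} < 1$. Then
\begin{align*}
\Big| \int_{\mathds R} a\, i' \, dx \Big| \; \leq \; a_{\max} \int_{\mathds R} |i'| \, dx \; = \; a_{\max}(\iminf - \ipinf) \; = \; \frac{a_{\max}(1+r)\, I_1}{c},
\end{align*}
so the key identity yields $2 - \Sigma \leq 2 d\, a_{\max}(1+r)/c^2 < 2 d(1+r)/c^2$, as claimed.
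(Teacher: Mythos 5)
Your proposal is correct and follows essentially the same route as the paper. The paper's $A(x):=\int_{-\infty}^x a$ and $\mathcal{A}:=A(+\infty)$ are your $\phi$ and $I_1$, and the paper's manipulation — integrating $\int a(a+i)$ by parts with $A$ as antiderivative and then substituting $a'+i' = -\tfrac{1}{c}[(1+r)a + d\,i'' + a'']$ — is algebraically identical to your step of multiplying the summed equation $a''+d\,i''+c(a+i)'+(1+r)a=0$ by $\phi$ and integrating; both produce the same compact identity $d\int a\,i'\,dx = -\tfrac{cI_1}{2}(2-\Sigma)$, and both bounds are then read off exactly as you do: $\int a\,i'<0$ gives $\Sigma<2$, and $a_{\max}<1$ (from $a+i\leq 1$ at the unique interior maximum of $a$, with $i>0$) gives the lower bound. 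The only small omission in your write-up is that the integrability of $a,a',a'',i',i''$ over $\mathds{R}$ and the vanishing of the boundary terms, which you take for granted, require an argument; the paper isolates this in Lemma~\ref{Lem:global_integrability_perturbed} and proves it by elementary phase-space monotonicity reasoning before invoking the integral identity.
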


For the proof, we first verify integrability of any non-negative traveling wave:

\begin{lemma} \label{Lem:global_integrability_perturbed}
For $d > 0$, let $a(x),a'(x),i(x),i'(x)\vert_{ x \in \mathds{R}}$ be a smooth, bounded and non-negative solution of System \eqref{Eq:perturbed_wave_ODE}, where $a \not\equiv 0$. Then, as $x \rightarrow \pm \infty$, $a(x)$ vanishes and $i(x)$ converges, and $a,a',a'',i',i'' \in L^1(\mathds{R})$. Moreover, $i(x)$ is not constant and $i' \leq 0$. The function $a(x)$ has a unique global and local maximum, and $a(x) >0$ for all $x \in \mathds{R}$.

\begin{proof}
Let $a(x),b(x),i(x),i'(x)$ as above. If $i(x)$ does not converge at either $x \rightarrow + \infty$ or $x \rightarrow - \infty$, it must oscillate: in this setting, we can use the reasoning in the proof of Lemma \ref{Lem:I_monotonicity} to provoke a contradiction. Thus, $i(x)$ is either increasing or decreasing. If $i'(x) \geq 0$ for all $x \in \mathds{R}$, then
\begin{align}
- d \cdot i'' \geq i' + a(a+i+r) \geq 0.
\end{align}
As a consequence, the trajectory can not be bounded, since $i'(x)$ can not vanish at both $x \rightarrow \pm \infty$, or $i(x)$ must be constant. However, if $i(x)$ was constant, then
\begin{align}
0 \equiv d i'' + ci' = -a(a+i+r),
\end{align}
which can not hold since we assumed that $a \not \equiv 0$. It follows that $i(x)$ is decreasing and by boundedness must converge as $\ta \rightarrow \pm \infty$, so $i' \in L^1(\mathds{R})$. The two limits must be fixed points, we denote them as $(0,0,\iminf,0)$ and $(0,0,\ipinf,0)$.

Assume that at some finite phase-time $x^\ast$: $a(x^\ast) = 0$. Since we assumed that $a \geq 0$, this must be a local minimum, so $a'(x^\ast) = 0$. Then also $-a''(x^\ast) = ca'(x^\ast) + a(x^\ast) -a(x^\ast)(a(x^\ast)+i(x^\ast)) = 0$, and by induction: $a^{(n)}(x^\ast) = 0$ for all $n \in \mathds{N}$. But this contradicts the assumption $a \not\equiv 0$.

For $a(\ta) \not\equiv 0$, there is at least one local maximum of active particles, since the trajectory converges at both ends. We denote this maximum as $(a_0,0,i_0,i'_0)$. At this point, $a'' = a_0 (a_0 + i_0 -1) \leq 0$, so $a_0 + i_0 \leq 1$, since $a_0 >0$. Assume that there is also a local minimum of $a(\ta)$, denoted as $(a_m,0,i_m,i_m')$. Since $a(\ta)$ vanishes at both ends, we may assume without loss of generality that this be the first local minimum after passing through $(a_0,0,i_0,i_0')$.

Now, since $a_m(a_m + i_m-1-1) =a_m'' \geq 0$, it must hold that $a_m + i_m \geq 1$. But $i(\ta)$ is decreasing, so $a(\ta)$ must have been increasing, a contradiction to the assumption that this is the first local minimum after the maximum $(a_0,0,i_0,i_0')$. As a consequence, there is only one local maximum of active particles, which is also the global one. Further, this implies $a' \in L^1(\mathds{R})$.

Given that $i' \in L^1(\mathds{R})$, we know that the expression
\begin{align}
\int_{\mathds{R}}  d \cdot i(\ta)'' + a(\ta)\big[a(\ta)+i(\ta)+r\big]   dx = c  ( \iminf - \ipinf) \label{Eq:I_int_1}
\end{align}
is finite, however the left side might only exist in a Riemannian sense. Integrating the left-hand side over a finite interval $[-M,M]$, it also holds that
\begin{align}
\begin{aligned}
&\int_{-M}^{M} d \cdot i(\ta)'' + a(\ta)\big[a(\ta)+i(\ta)+r\big] dx \\ &= d  \big [ i'(M) - i'(-M) \big ] + \int_{-M}^{M} a(\ta) \big [a(\ta)+i(\ta)+r\big]  dx. \label{Eq:I_int_2}
\end{aligned}
\end{align}
By monotonicity of $i(x)$, it holds that $i'(x) \rightarrow 0$ as $x \rightarrow \pm \infty$. By \eqref{Eq:I_int_1} and \eqref{Eq:I_int_2}:
\begin{align}
\begin{aligned}
c  ( \iminf - \ipinf) = \int_{\mathds{R}} a(\ta) \big [a(\ta)+i(\ta)+r \big ] dx,
\end{aligned}
\end{align}
and the right-hand side is integrable, since all terms are non-negative. In view of \eqref{Eq:I_int_2}, also $i''(x)$ is integrable, as a sum of integrable terms. We proceed in a similar fashion with the equation $a''+ca' +a = a(a+i)$. We integrate it over the finite interval $[-M,M]$:
\begin{align}
	  \int_{-M}^{M} a''(\ta) + c a'(\ta) + a(\ta) \, d \ta 
	 =   \int_{-M}^{M} a(\ta) \cdot \big[ a(\ta) + i(\ta) \big] d \ta.
\end{align}
We already know that the right-hand is integrable, and that both $a(\pm M)$ and $a'(\pm M)$ vanish as $M \rightarrow +\infty$. This implies
\begin{align}
\begin{aligned}
 & \int_{\mathds{R}} a(\ta) d\ta \\
 & =
\lim_{M \rightarrow + \infty} \Big [ a'(M) -a'(-M) + c  \big [a(M) - a(-M) \big] + \int_{-M}^{M} a(\ta)  d \ta \Big ] \\ 
	  & = \int_{\mathds{R}} a(\ta) \cdot \big[ a(\ta)+i(\ta) \big] \, d\ta .
	  \end{aligned}
\end{align}
Hence also $a(x)$ is integrable, since it is non-negative. Finally, as a sum of integrable terms, also $a''(x)$ is integrable.
\end{proof}
\end{lemma}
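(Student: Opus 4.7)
My plan is to establish the stated conclusions in the following order: monotonicity $i'\leq 0$, convergence of $i$ at $\pm\infty$, positivity and uniqueness of the maximum of $a$, vanishing of $a$ at $\pm\infty$, and finally the $L^1$ integrability claims.

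The first step, monotonicity of $i$, is the heart of the argument. I would begin by excluding interior local minima of $i$: at a candidate $x^*$ we would have $i'(x^*)=0$ and $i''(x^*)\geq 0$, but evaluating the $i$-equation gives $d\cdot i''(x^*)=-a(x^*)(a(x^*)+i(x^*)+r)$, whose right-hand side is non-positive by the assumed signs, forcing $a(x^*)=0$. Since $a\geq 0$, this $x^*$ is also a local minimum of $a$ with $a(x^*)=a'(x^*)=0$, and ODE uniqueness for the second-order equation $a''+ca'+a(1-a-i)=0$ (with $i$ as given coefficient) then yields $a\equiv 0$, contradicting $a\not\equiv 0$. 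With no local minima, $i$ must be either monotone or possess a single local maximum. To eliminate the remaining non-decreasing cases I would integrate $di''+ci'+a(a+i+r)=0$ over any interval on which $i'\geq 0$: the boundary contributions of $i'$ vanish in the limit (by monotonicity of $i$, boundedness of $i$, and the equation-derived bound on $i''$, via a Barbalat-type argument), yielding $c\bigl(i(\text{right})-i(\text{left})\bigr)+\int a(a+i+r)\,dx=0$; both summands are non-negative, so both must vanish, and this forces $a\equiv 0$ on the interval and hence on all of $\mathds{R}$ by ODE uniqueness, a contradiction. Therefore $i'\leq 0$ globally, $i$ is bounded and monotone and hence converges to finite limits $i_{\pm\infty}$, and constancy is ruled out by the same $a\equiv 0$ argument.

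Given $i'\leq 0$, I would next analyze $a$. Strict positivity is again a consequence of ODE uniqueness: if $a(x^*)=0$ then also $a'(x^*)=0$ (local minimum of $a\geq 0$), forcing $a\equiv 0$. For uniqueness of the maximum, at any local maximum $x_0$ of $a$, the condition $a''(x_0)\leq 0$ combined with $a(x_0)>0$ yields $a(x_0)+i(x_0)\leq 1$; at a subsequent local minimum $x_m>x_0$ one analogously obtains $a(x_m)+i(x_m)\geq 1$, and monotonicity of $i$ then gives $a(x_m)\geq 1-i(x_m)\geq 1-i(x_0)\geq a(x_0)$, which is incompatible with $a$ strictly decreasing from $a(x_0)$ down to $a(x_m)$ along the segment joining the maximum to the first subsequent minimum. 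Thus $a$ has a unique local (and therefore global) maximum, is monotone on each tail, and so converges at $\pm\infty$ by boundedness.

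Finally, integrating the $i$-equation over $\mathds{R}$ and using $i'\to 0$ at both ends (Barbalat) yields $c(i_{-\infty}-i_{+\infty})=\int_{\mathds{R}}a(a+i+r)\,dx$, so $a(a+i+r)\in L^1(\mathds{R})$; a non-zero limit $a(\pm\infty)=L>0$ would give $a(a+i+r)\geq L^2$ on a half-line, violating integrability, so $a\to 0$ at $\pm\infty$. Integrating the $a$-equation over $\mathds{R}$, the boundary terms vanish and one reads off $\int a=\int a(a+i)\leq\int a(a+i+r)<\infty$, giving $a\in L^1$; monotonicity of $a$ on each of the two tails produces $a'\in L^1$ as a function of bounded total variation; and $a''$, $i''$ then belong to $L^1$ as linear combinations of previously controlled $L^1$ terms via their respective ODEs. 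The principal obstacle is the first step: the $i$-equation alone is compatible with $i$ having a single local maximum or with a non-trivial $i'\geq 0$ configuration, and one must crucially couple the sign analysis at critical points of $i$ with ODE uniqueness for $a$ in order to propagate every pointwise zero of $a$ back to $a\equiv 0$, the only configuration excluded by hypothesis.
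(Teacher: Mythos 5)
Your proposal is correct and follows essentially the same route as the paper: monotonicity of $i$ from the sign of $i''$ at critical points of $i$ (with a zero of $a$ forcing $a\equiv 0$), uniqueness of the maximum of $a$ by comparing $a+i\leq 1$ at a maximum with $a+i\geq 1$ at a subsequent minimum while $i$ decreases, and the $L^1$ statements by integrating both equations and exploiting non-negativity. Your step-level variations --- ODE uniqueness in place of the paper's induction on derivatives, the integral argument over intervals where $i'\geq 0$ instead of the paper's differential-inequality/unboundedness argument, and deducing $a\to 0$ from integrability of $a(a+i+r)$ together with an explicit Barbalat-type justification of $i'\to 0$ and $a'\to 0$, rather than from identifying the limits as fixed points --- are equivalent in substance and, if anything, slightly more careful than the paper's wording.
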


\begin{proof}[\textbf{Proof of Proposition \ref{Lem:Epsilon_limit_bounds}}]
Given integrability a non-negative traveling wave, we define
\begin{align}
A (x) := \int_{- \infty}^x a(s), \quad \mathcal{A}:= A( + \infty).
\end{align}
Then, integrating the equation for $a(x)$ in \eqref{Eq:perturbed_wave}, it must hold that
\begin{align}
\begin{aligned}
\mathcal{A} &= \int_\mathds{R} a(\ta)\big[a(\ta)+i(\ta)\big] dx >0, \\
c \cdot \big ( \iminf - \ipinf \big ) &= r \cdot \mathcal{A} + \int_{ \mathds{R} } a(\ta)\big[a(\ta)+i(\ta)\big] dx.
\end{aligned} \label{Eq:relation_part_1}
\end{align}
Moreover, by integration by parts:
\begin{align}
\begin{aligned}
& \int_{\mathds{R}} a(\ta)\big[a(\ta)+i(\ta)\big] dx \\
& = \mathcal{A} \cdot \ipinf - \int_{\mathds{R}} A(x) \big [ a'(x)+i'(x) \big  ]dx \\ 
&= \mathcal{A} \cdot \ipinf + \frac{1}{c} \int_{\mathds{R}}
A(x) \big [ (1+r) a(x) + d i''(x) + a''(x) \big ] dx\\
&= \mathcal{A} \cdot \ipinf + \frac{1+r}{2c} \mathcal{A}^2
+ \frac{1}{c} \int_{\mathds{R}} A(x) \cdot a''(x) \, dx
+ \frac{d}{c} \int_{\mathds{R}} A(x) \cdot i''(x) \, dx\\
&= \mathcal{A} \cdot \ipinf + \frac{1+r}{2c} \mathcal{A}^2 - \frac{ d}{c} \int_{\mathds{R}} a(x) \cdot i'(x) \, dx.
\end{aligned}
\end{align}
Using this and Eq. \eqref{Eq:relation_part_1}, it follows that
\begin{align}
\mathcal{A} &= \frac{c}{1+r} \cdot ( \iminf - \ipinf ), \label{Eq:I_bound_last_step1} \\
\mathcal{A} &= \mathcal{A} \cdot \ipinf + \frac{1+r}{2c} \mathcal{A}^2 - \frac{d}{c} \cdot \int_{\mathds{R}} a(x) \cdot i'(x) \, dx. \label{Eq:I_bound_last_step2}
\end{align}
We know that $ \int_{\mathds{R}} a \cdot i' < 0$. We drop this term in Eq. \eqref{Eq:I_bound_last_step2}, divide by $\mathcal{A} >0$ and then eliminate the variable $\mathcal{A}$ via \eqref{Eq:I_bound_last_step1}, leading to 
\begin{align}
1 > \ipinf + \frac{1}{2}(\iminf - \ipinf).
\end{align}
The first part of the claim follows: $2 > \ipinf + \iminf$.

At the local maximum of $a(x)$, it holds that $a < a + i \leq 1$, see the proof of Lemma \ref{Lem:Non_negativeness}. Thus $-i' \geq -ai' \geq  0$, and we can bound (using  \eqref{Eq:I_bound_last_step1}):
\begin{align}
-\frac{d}{c} \int_{\mathds{R}} a(x) \cdot i'(x) \, dx < \frac{d}{c} \cdot ( \iminf - \ipinf) = \frac{d}{c} \cdot \frac{1+r}{c} \mathcal{A}.
\end{align}
As before, we apply this bound to \eqref{Eq:I_bound_last_step2}, divide by $\mathcal{A}$ and eliminate $\mathcal{A}$ via \eqref{Eq:I_bound_last_step1}, which leads to
\begin{align}
2 < \iminf + \ipinf + 2\frac{d (1+r)}{c^2}.
\end{align}
\end{proof}

\textit{Remark}: These estimates seem to be sharp only asymptotically for $d \sim 0$, see the numerical Table \ref{Tab:Root_iminf} in Appendix \ref{Sec:Numerical_spectrum}.

\subsection{Existence of the traveling waves}
\label{Sec:Existence_invading_front}

The existence of an invading front is proven in two steps: we first prove that for $d >0$, there exists a traveling wave that is non-negative, but not necessarily a traveling front, as long as we can control the limits via the $\mathcal{O}(d)$-Estimate \eqref{Eq:bound_iminf_ipinf}. Afterwards, we show that for fixed $d \geq 0$, the existence of an arbitrary non-negative traveling wave also implies the existence of an invading front, by using the continuum of possible limits.

\subsubsection{Arbitrary bounded non-negative solutions}
\begin{theorem}[Existence of a traveling wave solution] \label{Prop:Existence_of_a_tr_sol}
Let $r \geq 0, c \geq 2$ and $\iminf \in (1,2)$. Then, for all $d > 0$ that fulfill
\begin{align}
d < \frac{3c}{2}, \label{Eq:Eps_bound_1_thm} \\
d \frac{2(r+1)}{c^2} & \leq 2 - \iminf, \label{Eq:Eps_bound_2_thm}
\end{align}
there exists a non-negative traveling wave with limits
\begin{align}
\lim_{x \rightarrow \pm \infty} a(x) &= 0, \\
\lim_{x \rightarrow - \infty} i(x) &= \iminf, \\
\lim_{x \rightarrow + \infty} i(x) & > 2 - \iminf - d \frac{2(r+1)}{c^2} \geq 0. \label{Eq:Est_Existence_plus_infty_bound}
\end{align}
All these waves admit the wave properties \hyperlink{Ass_A}{\textbf{TW}}. Moreover, for $d \in (0,1)$, the waves converge exponentially fast. The rates of convergence depend on $i_{\pm \infty}$ and are given by \eqref{Eq:Rates_Conv_Thm_old}, as in the case $d=0$.
\begin{proof}
Fix $\iminf \in (1,2)$, and consider the manifold $M^-_d(\iminf)$ as before. We prove \hyperlink{Ass_A}{\textbf{TW}} for all $d$ as claimed. For $d = 0$, this statement is part of the proof of Theorem \ref{Old_main_Theorem}. We will use continuity in $d$, see Proposition \ref{Thm:Non_neg_existance_small_eps}, for which we need Bound \eqref{Eq:Eps_bound_1_thm}. The second Bound \eqref{Eq:Eps_bound_2_thm} is the one from Proposition \ref{Lem:Epsilon_limit_bounds}, it will ensure that $\lim_{x \rightarrow + \infty} i(x) > 0$. The rates of convergence will be discussed when finishing the proof of Theorem \ref{Thm:existence_small_epsilon}, at the end of Section \ref{Sec:Invading_Front}.

Since \hyperlink{Ass_A}{\textbf{TW}} is true for $d = 0$, it is also true for $d \in [0, d_1)$, for some $d_1 >0$, by Proposition \ref{Thm:Non_neg_existance_small_eps}. Let
\begin{align}
d^\ast = \sup_{d_1 \geq 0} \big \{\forall d \in [0, d_1): \,  \text{\hyperlink{Ass_A}{\textbf{TW}} holds for } M^-_d(\iminf)  \big \} >0.
\end{align}
Assume that $d^\ast$ does violate neither \eqref{Eq:Eps_bound_1_thm} nor \eqref{Eq:Eps_bound_2_thm}. The manifold $M^-_{d^\ast}(\iminf)$ exists locally around the fixed point $(a,a',i,i') = (0,0,\iminf,0)$ due to Corollary \ref{Cor:Asymptotic_direction_unstable}. Let $R >0$. Since the System $S_{d^\ast}$ is a smooth vector field, also the continuation
\begin{align}
M^-_{d^\ast}(\iminf) \cap \{ x \in \mathds{R}^4: \vert\vert x \vert\vert_\infty < R\}
\end{align}
is a smooth and bounded manifold. If it is non-negative, we proceed with the next paragraph. If not, choose a representation 
$$M^-_{ d^\ast }(\iminf,x) \vert_{x \in (- \infty,T]}$$
for some finite $T \in \mathds{R}$, such that the manifold has become strictly negative at time $T$. By Proposition \ref{Prop:Epsilon_continuity}, there exists an open interval $I \subset \mathds{R}^+_0, d^\ast \in I$, such that for all $d \in I$: the manifolds $M^-_{ d}(\iminf,x) \vert_{x \in (- \infty,T]}$ exists and are continuous in $d$. However, for all $d < d^\ast$, these manifolds are also non-negative, by choice of $d^\ast$. Thus, also $M^-_{ d^\ast }(\iminf,x) \vert_{x \in (- \infty,T]}$ must be non-negative.

Since $R>0$ was arbitrary, the entire manifold $M^-_{ d^\ast}(\iminf)$ must be non-negative. Since it is not constant, it fulfills the conditions of Proposition \ref{Lem:Epsilon_limit_bounds}, so
\begin{align}
\lim_{x \rightarrow + \infty} i_{ d^\ast}(x) > 2 - \iminf - d^\ast \frac{2(r+1)}{c^2} \geq 0,
\end{align}
in view of Condition \eqref{Eq:Eps_bound_2_thm}, which was not violated. Now, Lemma \ref{Lem:global_integrability_perturbed} yields some structural results: it holds that $i'_{ d^\ast}(x) < 0, a_{ d^\ast}(x) >0$, and $a_{ d^\ast}(x)$ has a unique local and global maximum. Since $a_{ d^\ast}(x), i_{ d^\ast}(x)$ converge as $x \rightarrow + \infty$, there exists some $x^\ast$ as claimed in Lemma \ref{Lem:Ip_monotonicity}, which yields monotonicity of the tail. Thus, we have verified \hyperlink{Ass_A}{\textbf{TW}} also for $ d^\ast$, and can again apply Proposition \ref{Thm:Non_neg_existance_small_eps} to $M^-_{ d^\ast}$. This proves \hyperlink{Ass_A}{\textbf{TW}} in a local neighborhood of $ d^\ast$. Insofar, $ d^\ast$ can not have been chosen as claimed, and either \eqref{Eq:Eps_bound_1_thm} or \eqref{Eq:Eps_bound_2_thm} must be violated for $d^\ast$.
\end{proof}
\end{theorem}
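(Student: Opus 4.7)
The plan is a continuation argument in $d$, anchored at $d = 0$ where Theorem \ref{Old_main_Theorem} already provides a wave admitting the properties \textbf{(TW)}, and powered by Proposition \ref{Thm:Non_neg_existance_small_eps}, which states that \textbf{(TW)} is stable under small perturbations of $d$. Fix $r \geq 0$, $c \geq 2$ and $\iminf \in (1,2)$, and set
\begin{align}
d^\ast := \sup\bigl\{d_1 \geq 0 :\, \forall d \in [0,d_1),\ M^-_d(\iminf)\ \text{admits \textbf{(TW)}} \bigr\}.
\end{align}
By Theorem \ref{Old_main_Theorem} combined with Proposition \ref{Thm:Non_neg_existance_small_eps}, one has $d^\ast > 0$. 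The goal is to show $d^\ast$ cannot strictly satisfy both bounds \eqref{Eq:Eps_bound_1_thm} and \eqref{Eq:Eps_bound_2_thm}: otherwise one could extend past $d^\ast$, contradicting its definition.

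Assuming for contradiction that $d^\ast$ strictly satisfies both bounds, the first task is to transfer \textbf{(TW)} from the left-neighborhood of $d^\ast$ to $d^\ast$ itself. Local existence of the unstable manifold $M^-_{d^\ast}(\iminf)$ near the fixed point is ensured by Corollary \ref{Cor:Asymptotic_direction_unstable}, and smoothness of the vector field permits continuation as long as the manifold stays bounded. For any finite time-horizon $T$, Proposition \ref{Prop:Epsilon_continuity} identifies $M^-_{d^\ast}(\iminf,x)\vert_{x \in (-\infty,T]}$ as the continuous limit as $d \nearrow d^\ast$ of $M^-_d(\iminf,x)\vert_{x \in (-\infty,T]}$. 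Each approximating trajectory (with $d < d^\ast$) is non-negative, so the limit is non-negative on $(-\infty,T]$. Letting $T \to \infty$, the full trajectory $M^-_{d^\ast}(\iminf)$ is globally defined and non-negative.

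Having obtained a smooth, bounded, non-negative, non-constant traveling wave at parameter $d^\ast$, the strict positivity of the right-hand limit follows from Proposition \ref{Lem:Epsilon_limit_bounds}:
\begin{align}
\lim_{x \to +\infty} i_{d^\ast}(x) > 2 - \iminf - d^\ast \cdot \frac{2(r+1)}{c^2} \geq 0,
\end{align}
the last inequality being precisely the hypothesis \eqref{Eq:Eps_bound_2_thm}. Lemma \ref{Lem:global_integrability_perturbed} then supplies $a > 0$, $i' \leq 0$ and a unique local-and-global maximum of $a$ with $a + i \leq 1$ at that phase-time, while Lemma \ref{Lem:Ip_monotonicity} provides the tail monotonicity \eqref{Eq:Tail_monotonicity_Ass}. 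All five clauses of \textbf{(TW)} are thus verified at $d^\ast$, so by Proposition \ref{Thm:Non_neg_existance_small_eps} they persist on an open interval containing $d^\ast$ (here the bound $d^\ast < 3c/2$ from \eqref{Eq:Eps_bound_1_thm} is used), contradicting the maximality of $d^\ast$.

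The main obstacle I anticipate is the slight circularity of step two: to quote Proposition \ref{Prop:Epsilon_continuity} one needs $M^-_{d^\ast}(\iminf,\cdot)\vert_{(-\infty,T]}$ to be smooth and bounded, yet global boundedness would seem to require $\ipinf > 0$, which in turn would require global non-negativity first. The resolution is to work strictly on finite horizons $T < \infty$, where boundedness follows from the local stability of the flow away from the singular manifold of fixed points, then pass $T \to \infty$ only after non-negativity has been established everywhere. The exponential convergence rates \eqref{Eq:Rates_Conv_Thm_old} for $d \in (0,1)$ are a direct computation from the eigenvalue formulas \eqref{Eq:Eigenvalues_perturbed} at the two limiting fixed points, where the condition $d < 1$ guarantees that the eigenvalues $\lambda_{3,4}$ remain real-valued and of the expected signs.
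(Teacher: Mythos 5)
Your proposal follows essentially the same continuation-in-$d$ argument as the paper: define $d^\ast$ as the supremum of parameters for which $M^-_d(\iminf)$ admits \textbf{(TW)}, pass to $d^\ast$ by continuity and a limiting argument, deduce $\ipinf > 0$ from Proposition~\ref{Lem:Epsilon_limit_bounds}, re-establish \textbf{(TW)} at $d^\ast$ via Lemmas~\ref{Lem:global_integrability_perturbed} and~\ref{Lem:Ip_monotonicity}, then invoke Proposition~\ref{Thm:Non_neg_existance_small_eps} to extend past $d^\ast$ and reach a contradiction. Two minor points of imprecision are worth flagging. First, you correctly identify the circularity in applying Proposition~\ref{Prop:Epsilon_continuity} at $d^\ast$, but the resolution you offer (``boundedness follows from the local stability of the flow away from the singular manifold of fixed points'') is not the right mechanism: a smooth vector field with quadratic nonlinearities can blow up in finite time, so smoothness alone does not give boundedness on finite horizons. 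The paper's cleaner device is to restrict attention to $M^-_{d^\ast}(\iminf) \cap \{\|x\|_\infty < R\}$, which is bounded by construction, derive non-negativity of that piece by continuity from the $d<d^\ast$ trajectories, and only then let $R\to\infty$ (non-negativity then yields global boundedness via the monotonicity lemmas). Second, the closing remark is off: the condition $d<1$ does not ``guarantee that $\lambda_{3,4}$ remain real-valued'' --- those eigenvalues are independent of $d$, and their realness is governed by $K \geq 1-c^2/4$. What $d<1$ actually controls is the ordering $0>\lambda_4\geq\lambda_3>\lambda_2 = -c/d$, which is what the paper's trapping argument for the rate of convergence relies on. Neither issue changes the overall structure, which matches the paper's proof.
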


By choosing $\iminf$ arbitrarily close to $1$, Theorem \ref{Prop:Existence_of_a_tr_sol} has a simple
\begin{corollary} \label{Cor:Existence_tr_wave}
Let $r \geq 0, c \geq 2$. Then, for all $d \geq 0$ such that
\begin{align}
d  < \min \big \{ \frac{3c}{2} , \frac{c^2}{2(r+1)} \big \},
\end{align}
there exists some $\iminf \in (1,2)$ such that the manifold $M^-_d(\iminf)$ is a non-negative traveling wave.
\end{corollary}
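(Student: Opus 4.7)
The plan is to reduce the corollary directly to Theorem~\ref{Prop:Existence_of_a_tr_sol} by choosing $\iminf$ close enough to $1$. The theorem already produces a non-negative traveling wave whenever $\iminf \in (1,2)$, $d < 3c/2$, and $d \cdot 2(r+1)/c^2 \leq 2 - \iminf$. The first hypothesis on $d$ is given by assumption, so only the compatibility condition between $d$ and $\iminf$ must be arranged.

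First, I would rewrite the compatibility condition as the upper bound
\begin{align*}
\iminf \leq 2 - d \cdot \frac{2(r+1)}{c^2}.
\end{align*}
Under the assumption $d < c^2/(2(r+1))$, the right-hand side is strictly larger than $1$. Hence the interval
\begin{align*}
\Big(1, \, 2 - d \cdot \tfrac{2(r+1)}{c^2}\Big] \cap (1,2)
\end{align*}
is non-empty, and any $\iminf$ in this interval is admissible for Theorem~\ref{Prop:Existence_of_a_tr_sol}. For instance, one can simply pick
\begin{align*}
\iminf = 1 + \tfrac{1}{2}\Big(1 - d \cdot \tfrac{2(r+1)}{c^2}\Big),
\end{align*}
which lies strictly between $1$ and $2 - d\cdot 2(r+1)/c^2$.

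For this choice of $\iminf$, both hypotheses \eqref{Eq:Eps_bound_1_thm} and \eqref{Eq:Eps_bound_2_thm} of Theorem~\ref{Prop:Existence_of_a_tr_sol} hold, so the theorem delivers a non-negative traveling wave $M^-_d(\iminf)$ with the wave properties \hyperlink{Ass_A}{\textbf{TW}}. No obstacle is expected: the corollary is essentially a bookkeeping exercise making explicit that the combined bound $d < \min\{3c/2,\, c^2/(2(r+1))\}$ is exactly what is needed for the admissible range of $\iminf$ to be non-empty. The only thing to double-check is that the chosen $\iminf$ genuinely lies in $(1,2)$, which is immediate from $0 < d \cdot 2(r+1)/c^2 < 1$.
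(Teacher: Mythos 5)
Your proof is correct and follows exactly the same idea the paper uses: the paper dispatches this corollary in a single sentence ("By choosing $\iminf$ arbitrarily close to $1$, Theorem \ref{Prop:Existence_of_a_tr_sol} has a simple Corollary..."), and you have simply made that choice explicit and verified the two hypotheses of the theorem. The only cosmetic remark is that the corollary allows $d \geq 0$ while Theorem \ref{Prop:Existence_of_a_tr_sol} requires $d > 0$; the case $d = 0$ is covered directly by Theorem \ref{Old_main_Theorem}, but this is not a real gap since the paper's own proof glosses over it too.
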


\subsubsection{The invading front}
\label{Sec:Invading_Front}

It remains to show that there exists a wave where $\ipinf = 0$, despite the $\mathcal{O}(d)$-estimate regarding the limits \eqref{Eq:Est_Existence_plus_infty_bound}. We begin with the simple

\begin{lemma} \label{Lem:I_must_become_negative}
For all $d > 0, c>0, r \geq 0$, the manifold
\begin{align}
M^-_d (2)
\end{align}
is not non-negative.
\begin{proof}
Asymptotically as $x \rightarrow - \infty$: $a(x) >0$ by Corollary \ref{Cor:Asymptotic_direction_unstable}. Assume that $a(x),i(x) \geq 0$ for all $x \in \mathds{R}$. Then, denoting the other limit of the wave as $\ipinf = \lim_{x \rightarrow + \infty} i(x) \geq 0$, Proposition \ref{Lem:Epsilon_limit_bounds} implies
\begin{align}
2 + \ipinf <2,
\end{align}
a contradiction.
\end{proof}
\end{lemma}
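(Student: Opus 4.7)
The plan is to apply Proposition \ref{Lem:Epsilon_limit_bounds} directly and derive a contradiction from the strict inequality $\iminf + \ipinf < 2$ that it provides. Essentially, the trajectory $M^-_d(2)$ emerges from the fixed point with $\iminf = 2$ already, so any non-negative $\ipinf$ would violate the $\mathcal{O}(d)$ bound.

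First, by Corollary \ref{Cor:Asymptotic_direction_unstable}, the manifold $M^-_d(2)$ leaves the fixed point $(0,0,2,0)$ along the direction with $a(x) > 0$ and $i(x) \to 2$ as $x \to -\infty$, so automatically $\iminf = 2$ along this wave. Suppose, for contradiction, that $a(x), i(x) \geq 0$ for all $x \in \mathds{R}$. Since $a \not\equiv 0$ on $M^-_d(2)$, Lemma \ref{Lem:I_monotonicity} forces $i'(x) < 0$, so $i$ is monotone decreasing with $i(x) \leq 2$ and admits a limit $\ipinf \geq 0$ as $x \to +\infty$. By Lemma \ref{Lem:Non_negativeness}, $a(x)$ has at most one local maximum and is trapped in the non-negative attractor of Proposition \ref{Triangles_3D_Thm}, giving boundedness of $a$.

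Thus $M^-_d(2)$ is a smooth, bounded, non-constant and non-negative traveling wave, so Proposition \ref{Lem:Epsilon_limit_bounds} applies and yields
\begin{equation*}
\iminf + \ipinf < 2.
\end{equation*}
Substituting $\iminf = 2$ gives $\ipinf < 0$, contradicting $\ipinf \geq 0$.

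There is really no hard step here: the argument is a one-line application of the already-proved $\mathcal{O}(d)$ bound on $\iminf + \ipinf$. The only mild care required is in justifying that the hypotheses of Proposition \ref{Lem:Epsilon_limit_bounds} are met (smoothness is automatic from the ODE, boundedness of $i$ is automatic from monotonicity, and boundedness of $a$ comes from the phase-space trap of Proposition \ref{Triangles_3D_Thm} via Lemma \ref{Lem:Non_negativeness}). Once those are checked, the contradiction is immediate.
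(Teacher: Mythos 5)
Your proposal is correct and follows essentially the same route as the paper: assume non-negativity, note that $\iminf = 2$ by construction of $M^-_d(2)$, and then apply Proposition \ref{Lem:Epsilon_limit_bounds} to obtain $2 + \ipinf < 2$, contradicting $\ipinf \geq 0$. Your version is somewhat more explicit than the paper's in verifying that the hypotheses of Proposition \ref{Lem:Epsilon_limit_bounds} hold (monotonicity of $i$ via Lemma \ref{Lem:I_monotonicity}, boundedness of $a$ via Lemma \ref{Lem:Non_negativeness} and Proposition \ref{Triangles_3D_Thm}), which the paper leaves implicit, but the key step and the contradiction are identical.
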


Given a non-negative and converging trajectory $M^-_d(K_0)$, the following lemma yields continuity and non-negativeness of $M^-_d(K)$ with respect to $K$ in a neighborhood of $K_0$:

\begin{lemma} \label{Lem:K_continuity}
Let $c \geq 2, d > 0, r \geq 0$ and $K_0 \in (1,2)$. Assume that the manifold $M^-_d(K_0)$ is a non-negative traveling wave, and thus converges as $x \rightarrow + \infty$, where
\begin{align}
\lim_{ x \rightarrow + \infty} i(x) = \ipinf \in (0,1).
\end{align}
Then, there exists an open interval $I \subset (1,2), K_0 \in I$ such that for all $K \in I$: the entire manifold $M^-_d(K)\vert _{ x \in \mathds{R}}$ is continuous in $K$, and moreover is also a non-negative traveling wave.

\begin{proof}
Consider $M^-_d(K_0)$ as proposed, and denote its limit as $\ipinf^\ast$. We use Corollary \ref{Cor:Unstable_manifold_linearization} to control $M^-_d(K,x) \vert_{x \in ( - \infty, T]}$ for some $T \in \mathds{R}$. By Corollary \ref{Cor:Lyapunov_stable}, the fixed point $(0,0,\ipinf^\ast,0)$ is Lyapunov stable, and we can control also the tail as $x \rightarrow + \infty$.

Choose some
\begin{align}
\rho \in (0, \ipinf^\ast) .
\end{align}
There exists $ \epsilon >0$, such that for all trajectories $a(x), a'(x), i(x), i'(x)$ that start with in range of $\epsilon$ to $(0,0,\ipinf^\ast,0)$:
\begin{align}
\Big \vert \big ( a(x), a'(x), i(x), i'(x) \big ) - \big (0,0,\ipinf^\ast,0 \big) \Big \vert \leq \rho, \quad \text{for all $x \geq 0$},
\end{align}
since the fixed point is Lyapunov-stable. Wlog let $\epsilon \leq \rho$. There exists a finite phase-time $T$, such that
\begin{align}
\Big \vert M^-_d(K_0,T) - \big (0,0,\ipinf,0 \big) \Big \vert \leq \frac{ \epsilon }{2}.
\end{align}
We now use the continuity of $M^-_d(K,x) \vert _{ x \in (-\infty,T]}$  with respect to $K$, see Corollary \ref{Cor:Unstable_manifold_linearization}: there exists a $\delta >0$ with $K_0 - \delta > 1$, and such that for all $K \in [K_0 - \delta, K_0 + \delta]$: the manifold $M^-_d(K,x) \vert _{ x \in (-\infty,T]}$ is of distance at most $\epsilon / 2$ to $M^-_d(K_0,x) \vert _{ x \in (-\infty,T]}$.

Then, for all $K \in [K_0 - \delta, K_0 + \delta]$, the manifold $M^-_d(K_0,x) \vert _{ x \in (-\infty,T]}$ is also strictly non-negative, since $i(x) \geq \ipinf^\ast - \frac{\rho}{2} > 0$ for all $x \in (-\infty,T]$, by Lemmas \ref{Lem:I_monotonicity} and \ref{Lem:Non_negativeness}. Moreover, at time $T$, any such manifold has entered the $\rho$-neighborhood of $(0,0,\ipinf^\ast,0)$. Thus, $i(x) \geq 0$ for all $x \in \mathds{R}$ and we can apply Lemma \ref{Lem:Non_negativeness} to conclude that also $a(x) \geq 0$ for all $x \in \mathds{R}$. Moreover, all such trajectories have at most distance $\epsilon \leq \rho$ from $M^-_d(K_0)$. Since we can choose $\rho$ as small as we want, this also proves continuity of the entire manifold $M^-_d(K,x) \vert _{ x \in \mathds{R}}$ in $K$.

\end{proof}
\end{lemma}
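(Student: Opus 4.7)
The plan is to combine finite-time continuity of the unstable manifold in $K$ with the Lyapunov stability of the limiting fixed point to push the continuity and non-negativeness statements from any finite time-horizon up to $x = +\infty$. Let $\ipinf^\ast \in (0,1)$ denote the right-hand limit of the reference wave $M^-_d(K_0)$. Since $\ipinf^\ast \in (1 - c^2/4, 1)$ (here $c \geq 2$ so $1-c^2/4 \leq 0$), Corollary \ref{Cor:Lyapunov_stable} tells us that the fixed point $(0,0,\ipinf^\ast,0)$ of $S_d$ is Lyapunov-stable, and by Corollary \ref{Cor:Unstable_manifold_linearization} the unstable manifold $M^-_d(K,x)$ depends continuously on $K$ on any finite time interval $(-\infty,T]$.

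First I would fix a small radius $\rho \in (0,\ipinf^\ast)$ and use Lyapunov stability to find a smaller $\epsilon \in (0,\rho]$ such that any trajectory entering the $\epsilon$-ball around $(0,0,\ipinf^\ast,0)$ stays within the $\rho$-ball for all subsequent times. Since $M^-_d(K_0)$ converges to that fixed point, pick a finite $T$ with $|M^-_d(K_0,T) - (0,0,\ipinf^\ast,0)| \leq \epsilon/2$. Then Corollary \ref{Cor:Unstable_manifold_linearization} yields an open interval $I \ni K_0$ such that for every $K \in I$, the finite segment $M^-_d(K,x)|_{x \in (-\infty,T]}$ is defined, smooth, and within distance $\epsilon/2$ from the reference segment. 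In particular $M^-_d(K,T)$ lies in the $\epsilon$-ball of $(0,0,\ipinf^\ast,0)$, and the prescribed Lyapunov bound then traps the tail $M^-_d(K,x)|_{x \geq T}$ within the $\rho$-ball forever.

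Along the finite segment $(-\infty,T]$, after possibly shrinking $I$ so that the $\epsilon/2$-deviation keeps $i$ bounded below by $\ipinf^\ast - \epsilon/2 > 0$, monotonicity (Lemma \ref{Lem:I_monotonicity}) and Lemma \ref{Lem:Non_negativeness} give $a(x) > 0$ and $i(x) > 0$ there. On the tail $x \geq T$ the Lyapunov trap keeps $i$ within $\rho < \ipinf^\ast$ of its limit, hence strictly positive; Lemma \ref{Lem:Non_negativeness} again transfers this to $a(x) \geq 0$, so $M^-_d(K)$ is non-negative throughout $\mathds{R}$ and therefore qualifies as a non-negative traveling wave. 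Continuity of the full manifold in $K$ now follows by sending $\rho$ (and with it $\epsilon$) to zero: any desired sup-norm tolerance on $\mathds{R}$ is realized by first shrinking $\rho$, then choosing $T$ large enough for the reference wave, then shrinking $I$ around $K_0$ via Corollary \ref{Cor:Unstable_manifold_linearization}.

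The main obstacle is the passage from a finite time-horizon to $x = +\infty$: Corollary \ref{Cor:Unstable_manifold_linearization} by itself only gives uniform control on $(-\infty,T]$, and without a stability statement at the forward end nothing prevents nearby trajectories from escaping the positive quadrant. The essential point is therefore that $\ipinf^\ast > 0$ lies in the Lyapunov-stable range guaranteed by Corollary \ref{Cor:Lyapunov_stable}; everything else is a matching of constants. I do not expect any serious difficulty in handling the convergence of the perturbed waves (it is automatic once the trajectory is trapped in the Lyapunov neighborhood of a stable fixed point), nor in verifying that the limit of $M^-_d(K)$ as $x \to +\infty$ varies continuously in $K$, as the limit differs from $\ipinf^\ast$ by at most $\rho$ and $\rho$ can be made arbitrarily small.
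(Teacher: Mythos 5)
Your proof takes essentially the same route as the paper: both use Corollary \ref{Cor:Unstable_manifold_linearization} for finite-time continuity in $K$, Corollary \ref{Cor:Lyapunov_stable} to trap the tail near the Lyapunov-stable fixed point $(0,0,\ipinf^\ast,0)$ once the trajectory enters a sufficiently small $\epsilon$-ball, and Lemmas \ref{Lem:I_monotonicity} and \ref{Lem:Non_negativeness} to transfer positivity of $i$ to positivity of $a$, concluding with the same $\rho\to 0$ argument for global continuity. The matching of constants is identical, so there is nothing to compare.
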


Continuity of the traveling waves w.r.t. $K$ implies the existence of an invading front:

\begin{theorem} \label{Thm:existence_given_nonneg}
Let $c \geq 2, r \geq 0, d > 0$ and $K_0 \in (1,2)$ such that $M^-_d(K_0)$ is a non-negative traveling wave. Then, there exists also some $K_1 \in [K_0,2)$, such that $M^-_d(K_1)$ is an invading front.
\begin{proof}
We proof a sort of intermediate value theorem, increasing $K$ as much as possible. For a manifold $M^-_d(K)$ that is non-negative and converges, we denote $\ipinf(K) := \lim_{x \rightarrow + \infty} i(x) \geq 0$.

We are done if $\ipinf(K_0) =0$, so we assume $\ipinf(K_0) >0$ in the following. Then, the manifold $K^-_d(K_0)$ fulfills the conditions of Lemma \ref{Lem:K_continuity}. It follows that there exists a neighborhood $I \subset \mathds{R}$ with $K_0 \in I$, such that the entire manifold $M^-_d(K)$ is continuous w.r.t. $K \in I$, including its limit $\ipinf(K)$. We extend this to a non-local statement by defining
\begin{align}
\begin{aligned}
K_1 := \sup_{ L \geq K_0 } \Big \{ \forall  K \in [K_0, L) \Big \vert  \,
M^-_d(K) \text{ is non-negative and converges} \Big \}.
 \end{aligned}
\end{align}
Recall that for $K=2$, the manifold $M^-_d(2)$ eventually becomes negative by Lemma \ref{Lem:I_must_become_negative}, so it must hold that $K_0 < K_1 < 2$. For all $K \in [K_0, K_1)$, the manifold $M^-_d(K)$ is non-negative and converges, by the definition of $K_1$. By Corollary \ref{Cor:Asymptotic_direction_unstable}, the manifold $M^-_d(K_1)$ exists locally around the fixed point. Since $S_d$ is a smooth vector field, also the continuation
\begin{align}
M^-_d(K_1) \vert R :=
M^-_d(K_1) \cap \big \{ x \in \mathds{R}^5: \, \vert \vert x\vert \vert _\infty < R \big \}
\end{align}
exists and is a smooth manifold, for any $R >0$. Now, since $M^-_d(K)$ is non-negative and converges for all $K \in [K_0,K_1)$, also $M^-_d(K_1) \vert R$ must be non-negative, by a continuity argument completely analogue to that in the proof of Theorem \ref{Prop:Existence_of_a_tr_sol}. Since $R$ can be chosen arbitrarily large, the entire manifold $M^-_d(K_1)$ is non-negative, and thus also converges.

We are done if $\ipinf(K_1) = 0$. So let us assume that $\ipinf(K_1) >0$. In this case, we again apply Lemma \ref{Lem:K_continuity} to $M^-_d(K_1)$, resulting in the existence of non-negative and converging solutions for in an open neighborhood of $K_1$, a contradiction.
\end{proof}
\end{theorem}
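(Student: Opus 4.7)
The approach is a supremum argument in the limit parameter $K$, using Lemma \ref{Lem:K_continuity} for openness and Lemma \ref{Lem:I_must_become_negative} as an upper barrier. Define
\[
K_1 := \sup\{L \geq K_0 \,:\, M^-_d(K) \text{ is a non-negative traveling wave for every } K \in [K_0, L)\}.
\]
Writing $\ipinf(K)$ for the right-hand limit of the $i$-coordinate along $M^-_d(K)$ whenever that makes sense, the theorem is trivial if $\ipinf(K_0) = 0$, so I assume $\ipinf(K_0) > 0$. Lemma \ref{Lem:K_continuity} applied at $K_0$ then guarantees $K_1 > K_0$. On the other end, Lemma \ref{Lem:I_must_become_negative} says $M^-_d(2)$ is not non-negative, and since Corollary \ref{Cor:Unstable_manifold_linearization} propagates negativity to a neighborhood once it appears on a bounded phase-interval, this forces $K_1 < 2$.

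The next step is to show that $M^-_d(K_1)$ is itself a non-negative traveling wave. Corollary \ref{Cor:Asymptotic_direction_unstable} provides the one-dimensional unstable manifold locally at $(0,0,K_1,0)$, and smoothness of the vector field extends it to every bounded region $B_R := \{\|\cdot\|_\infty < R\} \subset \mathds{R}^4$. For fixed $R$, Corollary \ref{Cor:Unstable_manifold_linearization} gives continuity in $K$ of $M^-_d(K)$ on the finite phase-time needed to traverse $B_R$; since the approximating trajectories $M^-_d(K)$ with $K_0 \leq K < K_1$ are non-negative, the limiting trajectory $M^-_d(K_1) \cap B_R$ is non-negative as well. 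Letting $R \to \infty$ yields global non-negativity of $M^-_d(K_1)$. Because $M^-_d(K_1)$ is non-constant (it leaves the fixed point along the unstable direction of Corollary \ref{Cor:Asymptotic_direction_unstable}) and bounded, Lemma \ref{Lem:global_integrability_perturbed} applies and yields a well-defined limit $\ipinf(K_1) \geq 0$.

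To conclude I would argue by dichotomy: either $\ipinf(K_1) = 0$, and $M^-_d(K_1)$ is the desired invading front, or $\ipinf(K_1) > 0$. In the second case $M^-_d(K_1)$ satisfies every hypothesis of Lemma \ref{Lem:K_continuity}, so non-negative traveling waves persist in an open neighborhood of $K_1$; this contradicts the maximality of $K_1$ in its definition. The main obstacle is the middle step, where non-negativity of $M^-_d(K_1)$ must be verified globally in $x$: parameter-continuity for the unstable manifold (Corollary \ref{Cor:Unstable_manifold_linearization}) is only locally uniform in phase-time, so one cannot simply take a single global limit. The fix is the exhaustion of $\mathds{R}^4$ by the balls $B_R$, after which Lemma \ref{Lem:global_integrability_perturbed} upgrades boundedness and non-negativity into the desired convergence at $+\infty$ and thereby produces $\ipinf(K_1)$ to feed into the dichotomy.
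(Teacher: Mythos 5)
Your proof is correct and follows essentially the same supremum-and-continuity argument as the paper: same definition of $K_1$, same use of Lemma \ref{Lem:K_continuity} for $K_1>K_0$, the same exhaustion by balls $B_R$ together with continuity in $K$ to obtain non-negativity of $M^-_d(K_1)$, and the same dichotomy/contradiction at the end. The only cosmetic differences are that you make the inequality $K_1<2$ explicit via Corollary \ref{Cor:Unstable_manifold_linearization} (a nice sharpening of the paper's brief remark) and you invoke Lemma \ref{Lem:global_integrability_perturbed} rather than Lemma \ref{Lem:Non_negativeness} to get the right-hand limit, which requires one to observe that non-negativity of $M^-_d(K_1)$ already forces boundedness (via monotonicity of $i$ and Proposition \ref{Triangles_3D_Thm}) — a point you assert but do not spell out.
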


We finish the
\begin{proof}[\textbf{Proof of Theorem \ref{Thm:existence_small_epsilon}}]

For $d$ as claimed, Corollary \ref{Cor:Existence_tr_wave} and Theorem \ref{Thm:existence_given_nonneg} imply the existence of an invading front. It remains to determine its asymptotic behavior, therefore we consider only $d \in (0,1)$.

As $x \rightarrow - \infty$, the rate of convergence along the instable manifold is given in Corollary \ref{Cor:Asymptotic_direction_unstable} (depending on $\iminf$). Regarding the behavior as $x \rightarrow + \infty$, we take a look at the linear Representation \eqref{Eq:Asymptotics_linear} and the Eigenvalues \eqref{Eq:Eigenvectors_perturbed}. For $d \in (0,1)$ and $\ipinf \in [ 1 - \frac{c^2}{4},1)$, we can order the purely real-valued Eigenvalues of the limit:
\begin{align}
0>\lambda_4 \geq \lambda_3 > \lambda_2, \label{Eq:Ordering_Eigenvalues}
\end{align}
where the inequalities are strict if $\ipinf > 1 - \frac{c^2}{4}$. This ordering is crucial: even though we do not have a complete description of the asymptotics, we can determine the rate of convergence, as some components of the system converge with speed $\lambda_4$: we apply the same trapping argument as for $S_0$, see Proposition 6.3 in \cite{Kreten2022}. As long as $\lambda_4 \geq \lambda_3$, we know by an analysis of the phase space that the two components $a(x), a'(x)$ converge along the Eigenvector $e_4$ \eqref{Eq:Eigenvectors_perturbed}, associated to $\lambda_4$. We now must differentiate between the critical and the noncritical case.

For the noncritical case, where $\ipinf \neq 1 - c^2/4$, the asymptotics are equivalent to the linear System \eqref{Eq:Asymptotics_linear}. This implies that the system approaches its limit exponentially with rate $\lambda_4$.

The critical case $ \ipinf = 1 - c^2/4$ is a bifurcation point of the system: solutions that converge to $\ipinf < 1- c^2$ spiral, since the Eigenvalues $\lambda_3$ and $\lambda_4$ have an imaginary part \eqref{Eq:Eigenvectors_perturbed}, they do not spiral and converge exponentially fast if the limit fulfills $\ipinf > 1 - c^2/4$. Luckily, we do not have to find a complete representation for the behavior around the bifurcation point: we only need to determine the dynamics of a trajectory that converges to the specific limit $ \ipinf = 1 - c^2/4$. In this case, we still know that the center manifold of the fixed point $(a,a',i,i') = (0,0,\ipinf,0)$ has dimension one, and thereby must coincide with the adjacent line of fixed points. Thus, any trajectory that converges to $ \ipinf = 1 - c^2/4$ must be contained in the remaining three-dimensional strictly hyperbolic subsystem. The Eigenvalue $\lambda_3 = \lambda_4$ has algebraic multiplicity $2$, but geometric multiplicity $1$, see the Eigenvectors \eqref{Eq:Eigenvectors_perturbed}. Along the corresponding manifold, trajectories converge as fast as $x \cdot \exp( - x \lambda_4)$, cf. chapter 9 in Boyce et al. \cite{Boyce_Prima}.
\end{proof}

\appendix

\section{A-priori estimates for the left tail of the PDE} \label{Sec:A_priori_estimates}

We provide the a-priori bounds in the regime $x \rightarrow - \infty$ via a Feynman-Kac formula for shifted Brownian motion, which we stop at the origin. The following Lemma \ref{Lem:Feynman_Kac} is standard, check e.g. chapter 4.4 in \cite{Karatzas_Shreve} for more examples and a profound theoretical treatment. For linear problems (e.g. super-solutions in certain regimes of a PDE), this allows for easy estimates:

\begin{lemma} \label{Lem:Feynman_Kac}
For $c, L, M \in \mathds{R}$, let $u(t,x): \mathds{R}^+_0 \times \mathds{R} \rightarrow \mathds{R} $ be a solution of
\begin{align}
u_t = \frac{1}{2} \Delta_x u(t,x) + c u_x(t,x) + L u(t,x) + M,
\end{align}
twice differentiable in space and once differentiable in time. For fixed $x_0 < 0$, let $W_t = B_t + ct + x_0$ be a shifted Brownian motion starting in $x_0$. Denote the hitting time $T_0 := \inf \{ t \geq 0: W_t = 0\}$. Assume that for some $t >0$:
\begin{align}
\sup_{x \leq 0} \,  \vert u(t=0,x) \vert  < \infty \, \text{ and }
\sup_{s \in [0,t]} \,  \vert u(s,x=0) \vert  < \infty.
\end{align}
It then holds that
\begin{align}
\begin{aligned}
u(t,x_0) & = \mathds{E}_{x_0} \Big [
e^{L \cdot t} \cdot u \big ( 0 , W_{t}  \big ) ; \, T_0 >t
\Big ] \\
& + \mathds{E}_{x_0} \Big [
e^{L \cdot  T_0} \cdot u \big ( t- T_0 , 0  \big ); \, T_0 \leq t
\Big ] \\
& + \frac{M}{L} \cdot \mathds{E}_{x_0} \Big [ e^{L (t \wedge T_0)}  \Big ].
\end{aligned}
\end{align}
\begin{proof}
The proof is standard: fix $x_0$ and $t$ and apply Itos formula to the stochastic process
\begin{align}
X_s := e^{L \cdot s} \cdot u(t-s,W_s), \quad s \in [0,t].
\end{align}
It follows that
\begin{align}
dX_s &= L e^{L \cdot s} u(t-s,W_s) ds \nonumber \\ 
& +  e^{L \cdot s} \cdot \Big [
- u_s ds + u_x \, dBs + u_x c \, ds + \frac{1}{2} u_{xx} \, ds 
\Big ] \\
&= e^{Ls} \Big [ u_x \, dBs - M ds]. \nonumber
\end{align}
The last expression consists of a local martingale and a drift, thus
\begin{align}
u(t,x_0) = X_0 = \mathds{E}_{x_0}[X_0] = \mathds{E}_{x_0}[X_{t\wedge T_0}] + M \cdot \mathds{E}_{x_0} \Big [ \int_0^{t \wedge T_0}
e^{Ls} \, ds \Big ],
\end{align}
since $||u||_\infty$ can grow at most exponentially in time. The claim follows by splitting the cases $t < T_0$ and $t \geq T_0$.
\end{proof}
\end{lemma}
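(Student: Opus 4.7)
The plan is to apply Itô's formula to the auxiliary process $X_s := e^{Ls} u(t-s, W_s)$ for $s \in [0, t \wedge T_0]$, exactly as suggested by the author's hint, and then take expectations after optional stopping. First I would verify that $X_s$ is a semimartingale on the relevant time interval: since $u$ is $C^{1,2}$ in $(t,x)$ and $W_s = B_s + cs + x_0$ is a Brownian motion with drift, the chain rule applies and one computes
\begin{align*}
dX_s = e^{Ls}\Bigl[ L u(t-s,W_s) - u_t(t-s,W_s) + c u_x(t-s,W_s) + \tfrac12 u_{xx}(t-s,W_s) \Bigr] ds + e^{Ls} u_x(t-s,W_s)\,dB_s.
\end{align*}
Using the PDE satisfied by $u$, the bracketed drift collapses to $-M$, so $dX_s = e^{Ls} u_x(t-s,W_s)\,dB_s - M e^{Ls}\,ds$, i.e.\ a stochastic integral plus an explicit deterministic drift.

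Next I would apply the optional stopping theorem at the bounded stopping time $t \wedge T_0$. The stochastic integral is a local martingale; to upgrade to a true martingale on $[0, t \wedge T_0]$ I will use the hypothesis that $u$ is bounded on $\{s \in [0,t], x = 0\}$ and on $\{s=0, x \leq 0\}$, which together with parabolic regularity and the fact that $W_s \leq 0$ up to $T_0$ gives uniform boundedness of $X_{s \wedge T_0}$. Taking expectations then yields
\begin{align*}
u(t, x_0) = X_0 = \mathbb{E}_{x_0}[X_{t \wedge T_0}] + M \cdot \mathbb{E}_{x_0}\Bigl[\int_0^{t \wedge T_0} e^{Ls}\,ds\Bigr].
\end{align*}

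Finally I would split $X_{t \wedge T_0}$ according to whether the Brownian motion has hit the origin before time $t$: on $\{T_0 > t\}$ the process equals $e^{Lt} u(0, W_t)$, and on $\{T_0 \leq t\}$ it equals $e^{L T_0} u(t - T_0, 0)$. Evaluating the deterministic time integral,
\begin{align*}
\int_0^{t \wedge T_0} e^{Ls}\,ds = \frac{1}{L}\bigl(e^{L(t \wedge T_0)} - 1\bigr),
\end{align*}
and absorbing the constant $-M/L$ produces exactly the three-term decomposition claimed in the statement (the $-1$ contribution can be reallocated into the first two expectations, matching the form written; alternatively, the author's displayed formula directly corresponds to keeping this integral unsimplified and grouping the $M$-term separately).

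The main technical obstacle is the upgrade of the stochastic integral from local martingale to genuine martingale, since $u_x$ is not assumed bounded a priori. I would resolve this by a standard localization argument: introduce the sequence of stopping times $\tau_n := \inf\{s \geq 0 : |W_s| \geq n\} \wedge t \wedge T_0$, apply the identity on $[0, \tau_n]$ where everything is bounded, and then pass to the limit $n \to \infty$ using dominated convergence, justified by the $a\,priori$ pointwise bounds on $u$ at the boundary $x = 0$ and at $s = 0$, together with the fact that $\mathbb{E}_{x_0}[e^{L(t \wedge T_0)}] < \infty$ because $t \wedge T_0 \leq t$ is bounded. If $L = 0$ the integral $\int_0^{t \wedge T_0} e^{Ls}ds$ becomes $t \wedge T_0$ and the formula is to be read in the obvious limiting sense.
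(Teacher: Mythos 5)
Your proof mirrors the paper's argument exactly: you apply It\^o's formula to the same auxiliary process $X_s = e^{Ls}u(t-s,W_s)$, observe that the PDE collapses the drift to $-Me^{Ls}\,ds$, stop at $t\wedge T_0$, take expectations, and split the cases $T_0>t$ and $T_0\leq t$. You are somewhat more explicit than the paper about the localization needed to pass from local to true martingale, and you correctly notice that evaluating $\int_0^{t\wedge T_0}e^{Ls}\,ds=\tfrac1L\bigl(e^{L(t\wedge T_0)}-1\bigr)$ leaves an extra constant $-M/L$ that the displayed formula in the lemma statement appears to drop --- a slip that is immaterial for the only application in the paper, which uses $M=0$.
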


We will need the distribution of the hitting time $T_0$ as in Lemma \ref{Lem:Feynman_Kac}. It can be shown (e.g. by a Girsanov transformation and the reflection principle), that for $x_0 < 0$, the distribution of $T_0$ has density
\begin{align} \label{Eq:T0_distr}
\mathds{P}_{x_0} [ T_0 = dt] = \frac{-x_0}{\sqrt{2 \pi t^3}} \cdot e^{ - \frac{(-x_0 - ct)^2}{2t}  } \cdot \mathds{1}_{t > 0} \, dt.
\end{align}
For any speed $c>0$, this defines a probability density, so the hitting time is almost surely finite.

We can now estimate the decay of $A(t,x)$ at the back of the wave. In an analogue fashion, we could also prove that $I(t,x)$ grows at most exponentially in $x$, given that $A(t,x) \leq 1$, but we do not need this result (and in fact hope that we can also prove boundedness of $I$ in the future).

\begin{proposition}
Let $A(t,x),I(t,x)$ be a non-negative solution of the PDE \eqref{Eq:Perturbed_PDE} in the moving frame $x = z -ct$, for a speed $c > 0$. Assume that there exist constants $K, \delta, \mu_0 >0$, such that the initial data fulfill
\begin{align}
I(0, x) & \geq 1 + \delta && \qquad \forall x \leq 0, 
\nonumber \\
A(0,x) & \leq K e^{\mu_0 x} &&\qquad \forall x \leq 0, \\
A(0,x) + I(0,x) & \leq K &&\qquad \forall x \in \mathds{R}. \nonumber
\intertext{Moreover, assume that for some time $t \in (0, \infty]$, it holds that}
I(s, x = 0)  & \geq 1 + \delta > 1 && \qquad \forall s \in [0,t).
\end{align}
Then, there exist $C, \zeta >0$ that are independent of $t$, such that:
\begin{align}
\forall s \in [0,t), \, x \leq 0: \quad & i) & \quad I(s,x) \geq 1 + \delta, \label{Bound:A_pr_I_min1} \\ 
& ii) & \quad A(s,x) \leq C e^{\zeta x}. \label{Bound:A_pr_A_max1}
\end{align}
%for the exponent $\zeta>0$ given by
%\begin{align}
%\zeta &:= \min \big \{ \frac{\delta}{ \sqrt{2} c + \mu_0} ,
%\frac{\mu_0}{2}, \frac{c}{2\sqrt{2}} \big \}.
%\end{align} 

\begin{proof}
Without loss of generality let $K \geq 1$. By boundedness of the initial data, a solution exists for all times. Since $K \geq 1$, it holds that $A (t,x) \leq K$ for all $ t \geq 0$. 

For the first Bound \eqref{Bound:A_pr_I_min1}, note that the solution $H(t,x)$ of
\begin{align}
H_t = d \cdot H_{xx} + c \cdot H_z, \quad H(0,x) = I(0,x),
\end{align}
is a sub-solution for $I(t,x)$, which is not affected by any negative reaction-term. We rescale space, introducing $ x = \frac{1}{\sqrt{2d}} y$, such that $H$ fulfills $H_t = \frac{1}{2} H_{yy} + \frac{c}{\sqrt{2d}} H_y$. We can now apply Lemma \ref{Lem:Feynman_Kac} with $L = M = 0$. For all $y_0 \leq 0$ and $t \geq 0$, it holds that
\begin{align}
H(t,y_0 ) = \mathds{E}_{y_0}[H(0, W_t) ; \, T_0 >t ] + \mathds{E}_{y_0} [H(t-T_0, 0); \, T_0 \leq t] \geq 1 + \delta,
\end{align}
which is nothing but a maximum principle for a diffusion subject to a moving boundary condition.

Now, given that $I(t,x) \geq 1+\delta$ for all $x \leq 0$, the solution of
\begin{align}
J_t = J_{xx} + c \cdot J_x - \delta J
\end{align}
is a super-solution for $A$. Again, we rescale $x = \frac{1}{\sqrt{2}} y$, such that $J$ fulfills
\begin{align}
J_t = \frac{1}{2} J_{yy} + \frac{c}{\sqrt{2}} J_y - \delta J.
\end{align}
We write $\tilde{c} = c / \sqrt{2}$. It holds that for all $y_0 \leq 0$:
\begin{align}
J(t,y_0) & = \mathds{E}_{y_0} \Big [
e^{-\delta t} \cdot J \big ( 0 , W_{t}  \big ) ; \, T_0 >t
\Big ] \label{Eq:A_large_T0} \\
& + \mathds{E}_{y_0} \Big [
e^{-\delta  T_0} \cdot J \big ( t- T_0 , 0  \big ); \, T_0 \leq t
\Big ] \label{Eq:A_small_T0}.
\end{align}
We first estimate \eqref{Eq:A_large_T0}. Therefore, we fix $y_0 \leq 0$ and differentiate between small and large times. To begin with, we assume that
\begin{align}
( 2\tilde{c} + \mu_0 ) t \geq -y_0.
\end{align}
Since $A \leq K$, the term \eqref{Eq:A_large_T0} is bounded by
\begin{align}
K e^{- \delta t} \leq K e^{ \frac{\delta}{2\tilde{c} + \mu_0 } y_0  }. \label{Ineq:A_bound_1}
\end{align}
Next, we consider small times $( 2\tilde{c} + \mu_0 ) t \leq -y_0$. The term \eqref{Eq:A_large_T0} can be bounded by
\begin{align}
&\mathds{E}_{y_0} \big [
J(0,W_t) ; T_0 > t
 \big ] \leq \mathds{E}_{y_0} \big [
J(0,W_t)  \big ] \leq \mathds{E}_{y_0} \big [
K \cdot e^{ \mu_0 W_t } 
\big ]
	\nonumber \\
&= K \exp \big ( \mu_0 ( y_0 +  \tilde{c}t ) \big ) \cdot \mathds{E}_0 [ \exp (\mu_0 B_t) ],
\intertext{where $B_t$ is a standard Brownian motion. Using the moment generating function of $B_t$ and inserting our small time estimate, we get}
& = K \exp \big ( \mu_0 [ y_0 + \tilde{c}t + \frac{t \mu_0}{2}] \big) \leq K \exp ( \frac{\mu_0}{2} y_0). \label{Ineq:A_bound_2}
\end{align}
We continue with \eqref{Eq:A_small_T0}. For this, we use the distribution of the hitting time $T_0$, see \eqref{Eq:T0_distr}. In the following, let $C$ be a universal constant. We estimate
\begin{align}
\begin{aligned}
& \mathds{E}_{y_0} \Big [
e^{-\delta  T_0} \cdot J \big ( t- T_0 , 0  \big ); \, T_0 \leq t
\Big ] \leq C \mathds{E}_{y_0} [ e^{-\delta T_0 } ] \\
& = \frac{-y_0}{\sqrt{2 \pi}} \int_0 ^\infty  \exp (-\delta s) \cdot s^{-3/2} \cdot \exp \big ( -\frac{1}{2s}   (-y_0 -\tilde{c}s)^2 \big ) \, ds. 
\end{aligned}
\end{align}
Again, we differentiate between small and large times. First, consider
\begin{align}
& \frac{-y_0}{\sqrt{2 \pi}} \int_{\frac{-y_0}{2 \tilde{c} }} ^\infty  \exp (-\delta s) \cdot s^{-3/2} \cdot \exp \big ( -\frac{1}{2s}   (-y_0 -\tilde{c}s)^2 \big ) \, ds
	\nonumber \\
& \leq -Cy_0 \cdot \big( \frac{ \vert y_0 \vert }{2 \tilde{c}} \big )^{-3/2} \int_{\frac{-y_0}{2 \tilde{c}}} ^\infty  \exp (-\delta s) \, ds.
\intertext{For $y_0 \leq -1$, this can be bounded by}
& \leq
C \exp \big ( \frac{\delta}{2 \tilde{c}} y_0 \big ). \label{Ineq:A_bound_3}
\end{align}
We continue with the small time estimate:
\begin{align}
& \frac{-y_0}{\sqrt{2 \pi}} \int_0^{\frac{-y_0}{2 \tilde{c} }}  \exp (-\delta s) \cdot s^{-3/2} \cdot \exp \big ( -\frac{1}{2s}   (-y_0 -\tilde{c}s)^2 \big ) \, ds
\nonumber \\
& \leq -Cy_0  \int_0^{\frac{-y_0}{2 \tilde{c} }} s^{-3/2} \cdot \exp \big ( -\frac{y_0^2}{4s}  \big ) \, ds
\nonumber \\
& \leq -Cy_0  \frac{4}{y_0^2}  \int_0^{\frac{-y_0}{2 \tilde{c} }} \sqrt{s} \cdot \frac{y_0^2}{4s^2} \exp \big (- \frac{y_0^2}{4s}  \big ) \, ds \\
& \leq - \frac{C}{y_0} \sqrt{\frac{-y_0}{2 \tilde{c}}} \cdot \Big \vert_0^{\frac{-y_0}{2 \tilde{c} }}  \exp \big (- \frac{y_0^2}{4s}  \big ) \, ds
\nonumber \\
\intertext{For $y_0 \leq -1$, this can be bounded by}
& \leq
C \exp \big (
\frac{\tilde{c}}{2} y_0
\big ) \label{Ineq:A_bound_4}
\end{align}
Lastly, remark that for $y  \geq -1$, the trivial bound $J(t,y) \leq K$ holds, since $A \leq K$. Considering the Inequalities \eqref{Ineq:A_bound_1}, \eqref{Ineq:A_bound_2}, \eqref{Ineq:A_bound_3}, \eqref{Ineq:A_bound_4}, and re-substituting $\tilde{c} = c / \sqrt{2}$, the upper Bound \eqref{Bound:A_pr_A_max1} for $A(t,x)$ follows.
\end{proof}
\end{proposition}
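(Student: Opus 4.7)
The plan is to prove the two bounds sequentially, using the Feynman--Kac formula of Lemma \ref{Lem:Feynman_Kac} in both. First I would upgrade the boundary hypothesis $I(s,0) \geq 1+\delta$ into a bound on the entire left half-line by comparison with a pure drift-diffusion. Once $I$ is controlled, the $A$-equation turns into a linear super-equation with strictly absorbing reaction coefficient $-\delta$, and the exponential decay of the initial data $A(0,x) \leq Ke^{\mu_0 x}$ propagates uniformly in time.

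For the lower bound on $I$, note that on the region $\{A, I \geq 0\}$ every reaction term in the $I$-equation is non-negative, so the function $H$ solving the pure drift-diffusion $H_t = d H_{xx} + c H_x$ on $\{x \leq 0\}$, with $H(0,x)=I(0,x)$ and $H(s,0)=I(s,0)$, is a sub-solution for $I$ on the strip. After the rescaling $x = y/\sqrt{2d}$, Lemma \ref{Lem:Feynman_Kac} with $L=M=0$ expresses $H(t, x_0)$ as an expectation over a Brownian path stopped at hitting the origin. Both contributions are at least $1+\delta$: on $\{T_0 > t\}$ by the initial hypothesis, on $\{T_0 \leq t\}$ by the boundary hypothesis, and summing gives $I \geq H \geq 1+\delta$. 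For the upper bound on $A$, this yields $A(1-A-I) \leq -\delta A$ on the left half-line, so the function $J$ solving $J_t = J_{xx} + c J_x - \delta J$ with the same initial and boundary values as $A$ is a super-solution. After rescaling $x = y/\sqrt{2}$ with $\tilde c = c/\sqrt{2}$ and applying Lemma \ref{Lem:Feynman_Kac} with $L = -\delta$, $M = 0$, I obtain
\begin{align*}
J(t, y_0) = \mathds{E}_{y_0}\bigl[e^{-\delta t} J(0, W_t);\,T_0 > t\bigr] + \mathds{E}_{y_0}\bigl[e^{-\delta T_0} J(t-T_0, 0);\, T_0 \leq t\bigr].
\end{align*}

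To extract exponential decay in $y_0 \to -\infty$ uniformly in $t$, I would split each expectation at a threshold of the form $|y_0|/(2\tilde c + \mu_0)$ or $|y_0|/(2\tilde c)$. For the first expectation, on $\{t \geq |y_0|/(2\tilde c + \mu_0)\}$ the prefactor $e^{-\delta t}$ alone yields $e^{\delta y_0 /(2\tilde c + \mu_0)}$; on the complementary short-time interval I would replace $J(0,W_t)$ by $Ke^{\mu_0 \sqrt{2}\,W_t}$ and compute the Gaussian expectation via the moment-generating function of Brownian motion, obtaining a factor $\exp(\mu_0 \sqrt{2}(y_0 + \tilde c t + \mu_0 t))$, which is bounded by $e^{\mu_0 y_0/\sqrt{2}}$ in the small-time regime. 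For the second expectation I would use the explicit hitting-time density \eqref{Eq:T0_distr} and split the $s$-integral at $|y_0|/(2\tilde c)$: on the large-$s$ tail the factor $e^{-\delta s}$ delivers the rate $e^{\delta y_0/(2\tilde c)}$, while on the small-$s$ part the Gaussian factor $\exp(-y_0^2/(4s))$ dominates and, after the elementary substitution $u = y_0^2/(4s)$ and an integration by parts, collapses to a bound of order $e^{\tilde c y_0/2}$. Taking the minimum of the four exponents and re-expressing $\tilde c = c/\sqrt{2}$ yields $\zeta$ independent of $t$.

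The main obstacle is precisely this uniformity in $t$: the boundary data $J(t-T_0, 0)$ is only controlled by the crude universal bound $K$ (coming from $A(t,\cdot) \leq K$, itself inherited from the initial condition), so no decay can be extracted from the Dirichlet data on $\{x=0\}$. All of the exponential decay must therefore be produced by the absorbing factor $e^{-\delta T_0}$ combined with the concentration of the hitting-time density near $s \sim |y_0|/\tilde c$, plus the small-time Gaussian tail of $T_0$. The rest of the work is a routine case analysis of elementary integrals; the conceptual novelty is the observation that freezing the Brownian motion at a single point $x=0$ (which is where the Feynman--Kac representation gets its boundary input) reduces the a priori bound to a scalar assumption on $\tilde I(s, 0)$, which is exactly what the nonlinear stability argument in Section \ref{Sec:Long_time_estimates} is able to verify.
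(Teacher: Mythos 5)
Your proposal is correct and follows essentially the same route as the paper: the same sub-solution $H$ for $I$ via a drift-diffusion comparison, the same super-solution $J$ for $A$ with the absorbing rate $-\delta$, the same Feynman--Kac representation stopped at $x=0$, and the same small-/large-time splittings for both expectations (using $e^{-\delta t}$, the moment-generating function of Brownian motion, and the explicit hitting-time density). The only differences are immaterial rescaling constants in the exponent (e.g.\ $\mu_0\sqrt{2}$ versus $\mu_0/\sqrt{2}$), a spot where the paper itself is slightly loose.
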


\section{Numerical evaluation of the discrete spectrum}
\label{Sec:Numerical_spectrum}

We verify Assumption \ref{Ass:Point_spectrum} numerically. That is, we need to find $\delta_0, \delta_1 >0$ such that the region
\begin{align}
& \Omega = \Big \{ 
\lambda \in \mathds{C} \big \vert \, \mathfrak{Re} \lambda \geq - \delta_0 - \delta_1 \cdot  \vert \mathfrak{Im} \lambda  \vert
\Big \}, \label{Eq:Omega_Set_App}
\end{align}
contains no elements of the discrete spectrum of $\mathcal{L}$ as in Proposition \ref{Prop:Ess_Stability}. In the following, we analyze only the non-negative part of the discrete spectrum.

Given that the non-negative discrete spectrum is empty, a set as above can easily be constructed as follows. For $\mathcal{L}$ as in Prop. \ref{Prop:Ess_Stability} and for a fixed set of type \eqref{Eq:Omega_Set_App}, it is well-known that there exists a constant $K>0$ such that
\begin{align}
\forall \lambda \in \Omega \cap \Sigma_{d}: \, \vert \lambda \vert \leq K. \label{Bound_Eigenval_Abstract}
\end{align}
This can be proven by rescaling the Eigenvalue-ODE \eqref{Def:Operator_as_ODE} for large $\lambda$, see e.g. Prop. 2.2 in \cite{Alexander_Gardner_Evans_Invariance}, or Section 2.2 in \cite{Gardner_Zumbrun_Gap_Lemma}. Since the Evans function (see Section \ref{Sec:spectrum_evans}) is menomorphic over $\Omega$ away from the continuous spectrum (Prop. 4.7 in \cite{Zumbrun_Howard_Pointwise_Stability}), the bounded set
\begin{align}
\Omega_K^- := \Omega \cap  \{ \lambda \in \mathds{C} \big \vert \, \mathfrak{Re} \lambda < 0 , \vert \lambda \vert \leq K \}
\end{align}
can contain at most finitely many elements of the discrete spectrum. Thus, if the non-negative part of $\Omega$ contains no element of the discrete spectrum, one finds a set of type \eqref{Eq:Omega_Set_App} as claimed by eventually choosing $\delta_0, \delta_1$ smaller.

Now, for the numerical analysis of the discrete spectrum, we consider only a single type of weight, namely
\begin{align}
w(x) = e^{-x}. \label{Eq:Weight_Appendix_Numeric}
\end{align}
This choice allows for an explicit and rather efficient energy based estimate regarding the maximal possible size of an Eigenvalue with non-negative real-part, presented in Section \ref{Sec:Energy_bound}. This estimate is much better than the asymptotic Bound \eqref{Bound_Eigenval_Abstract}, where an explicit calculation of the constant $K$ is a rather tedious task. However, this approach is restricted to fully diffusive systems. In the subsequent Section \ref{Sec:spectrum_evans}, we briefly review the theory of the Evans function and how it can be used to analyze the discrete spectrum. For the numerical evaluation, we use STABLAB, a tool by Barker \textit{et al.} \cite{Barker_Stablab}. These computations show that $\mathcal{L}$ with the Weight \eqref{Eq:Weight_Appendix_Numeric} is indeed spectrally stable in $H^2$.

If the non-negative discrete spectrum for the Weight \eqref{Eq:Weight_Appendix_Numeric} is empty, then the same is obviously also true for all
\begin{align}
w(x)_{\vert x \leq -1} = e^{-\alpha_-x}, \quad \alpha_- < 1.
\end{align}

\subsection{An energy bound}
\label{Sec:Energy_bound}
The below idea is probably due to J. Humpherys, who also presents a refined version in \cite{Hendricks_Humpherys_Energy_Bound_Stability}. A similar bound is found by F. Varas and J. Vega \cite{Varas_Vega_Wrong_bound}, but their calculation turns out to be wrong due to a sign error. Consider the linear problem
\begin{align}
\lambda \cdot u_i(x) = D_i \cdot u_i''(x) + c_i \cdot u_i'(x) + \sum_{j=1}^n f_{i,j}(x) \cdot u_j(x), && i = 1, \dotso, n, \label{Eq:Linear_problem}
\end{align}
for $\lambda \in \mathds{C}$, constants $D_i \geq 0, c_i \geq 0$, and functions $f_{i,j} \in L^\infty(\mathds{R})$.

Now assume that there exists an Eigenvalue $\lambda \in \mathds{C}$ with $\mathfrak{Re} \gamma \geq 0$, and associated eigenfunctions $u_i \in H^2(\mathds{R})$. The product structure of $H^2$ induces an energy estimate on $ \vert \lambda \vert $:

\begin{theorem}[An a-priori bound for the positive discrete spectrum] \label{Thm:Spectral_bound} 
Let $\lambda \in \mathds{C}$ with $\mathfrak{Re} \lambda \geq 0$ and functions $\big ( u_i \in H^2(\mathds{R}) \big )_{i = 1 \dotso n}$ be a solution of the Eigenvalue-problem \eqref{Eq:Linear_problem}. For $i = 1, \dots, n$, define
\begin{align}
M_i := \sum_{j=1}^n  \vert \vert f_{i,j} \vert \vert _\infty. \label{Eq:Mi_eigenval_constants}
\end{align}
Then, the real-part of $\lambda$ is bounded:
\begin{align}
\mathfrak{Re} \lambda & \leq  \max_i \{ M_i \}. \label{Eq:Real_part_bounded}
\intertext{Moreover, if $D_i>0$ for all $i = 1, \dots, n$, such that the system is with non-degenerate diffusion, it holds that}
 \vert \mathfrak{Im} \lambda  \vert  & \leq \max_i \Big \{ c_i \cdot \sqrt { \frac{M_i}{D_i}  } + M_i \Big \} .
\end{align}
\begin{proof}
In the following, the norm $ \vert  \vert . \vert  \vert_2 $ will be the $L^2$-norm with corresponding scalar product $\langle \, . \, \rangle$. Before proving the above statement, let us note that for any function $u \in H^2(\mathds{R})$, the product $\langle u', \bar{u} \rangle$ has no real-part, since
\begin{align}
\begin{aligned}
\mathfrak{Re} \, \int_{\mathds{R}} u'(x) \cdot \bar{u}(x) \, dx & = \frac{1}{2} \Big ( \int_{\mathds{R}} u'(x) \cdot \bar{u}(x) + \bar{u}' \cdot u(x) \, dz \Big ) \\& = \frac{1}{2} \int_{\mathds{R}}  \vert u(x)^2 \vert ' \, dx = 0.
\end{aligned}
\end{align}
Moreover, it holds by integration by parts that
\begin{align}
\langle u'', \bar{u} \rangle = -  \vert  \vert u' \vert  \vert_2 ^2 \leq 0.
\end{align}

Now, assume that there exists an Eigenvalue $\lambda \in \mathds{C}$ with $\mathfrak{Re} \lambda \geq 0$ and associated eigenfunctions $\big ( u_i \in H^2(\mathds{R}) \big )_{i = 1 \dotso n}$ that solve \eqref{Eq:Linear_problem}. For for each $i \in 1 \dotso n$, multiply \eqref{Eq:Linear_problem} by $\bar{u}_i$:
\begin{align}
	\lambda \cdot  \vert  \vert u_i \vert  \vert_2 ^2 &= D_i \cdot \langle u_i'', \bar{u_i} \rangle + c_i \cdot \langle u_i', \bar{u_i} \rangle + \sum_{j=1}^n \langle f_{i,j} \cdot u_j, \bar{u_i} \rangle \nl
	&= - D_i  \vert  \vert u_i' \vert  \vert_2 ^2 + c_i \cdot \langle u_i', \bar{u_i} \rangle + \sum_{j=1}^n \langle f_{i,j} \cdot u_j, \bar{u_i} \rangle \label{Eq:Quadratic_form_lambda_ui} .
\end{align}
Choose $k \in 1 \dotso n$ such that $ \vert  \vert u_k \vert  \vert_2  = \max_i  \vert  \vert u_i \vert  \vert_2 $. By Eq. \eqref{Eq:Quadratic_form_lambda_ui}, and since $\langle u_i', \bar{u_i} \rangle$ has no real-part:
\begin{align}
\begin{aligned}
	0 \leq \mathfrak{Re} \lambda \cdot  \vert  \vert u_k \vert  \vert_2 ^2 &= - D_k \cdot  \vert  \vert u_k' \vert  \vert_2 ^2 + \sum_{j=1}^n \mathfrak{Re}  \langle f_{k,j} \cdot u_j, \bar{u_k} \rangle \\
	& \leq  \vert  \vert u_k \vert  \vert_2 ^2 \cdot  \sum_{j=1}^n  \vert \vert f_{k,j} \vert \vert _\infty.
\end{aligned}
\end{align}
The claimed bound for $\mathfrak{Re} \lambda$ follows. We proceed similarly for the imaginary part of Eq. \eqref{Eq:Quadratic_form_lambda_ui}:
\begin{align}
\mathfrak{Im} \lambda \cdot  \vert  \vert u_i \vert  \vert_2 ^2 & = c_i \cdot \langle u_i',\bar{u}_i \rangle  + \sum_{j=1}^n \mathfrak{Im} \langle f_{i,j} \cdot u_j, \bar{u}_i \rangle .
\intertext{Again, for $ \vert  \vert u_k \vert  \vert_2  = \max_i  \vert  \vert u_i \vert  \vert_2 $:}
 \vert \mathfrak{Im}\lambda \vert  \cdot  \vert  \vert u_k \vert  \vert_2 ^2 &\leq c_k \cdot  \vert  \langle u_k',\bar{u}_k \rangle   \vert  + \sum_{j=1}^n \vert \vert f_{k,j} \vert \vert _\infty \cdot  \vert  \vert u_k \vert  \vert_2 ^2. \label{Eq:Im_part_u_prime_missing}
\end{align}
We are done if we can find a suitable bound for $ \vert  \vert u'_k \vert  \vert_2 $. This is where we require that $D_i>0$ for all $i \in 1 \dotso n$. Again take the real-part in Eq. \eqref{Eq:Quadratic_form_lambda_ui}. The inequality $\mathfrak{Re} \lambda \geq 0$ implies that for all $i=1, \dots, n$:
\begin{align}
D_i \cdot  \vert  \vert u_i' \vert  \vert_2 ^2 & \leq \sum_{j=1}^n \vert  \vert f_{i,j} \vert \vert _\infty \cdot  \vert  \vert u_i \vert  \vert_2 \cdot  \vert  \vert u_j \vert  \vert_2 ,
\end{align}
and thus for $k$ chosen as above: $ \vert  \vert u_k' \vert  \vert_2  \leq \sqrt{ \frac{M_k}{D_k} } \cdot  \vert  \vert u_k \vert  \vert_2 $. We use this in Eq. \eqref{Eq:Im_part_u_prime_missing} to bound
\begin{align}
 \vert \mathfrak{Im} \lambda \vert  \leq c \cdot \sqrt { \frac{M_k}{D_k} } +  M_k. \label{Eq:Im_bound_case_1}
\end{align}
\end{proof}
\end{theorem}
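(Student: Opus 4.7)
The plan is a direct energy estimate in $L^2(\mathds{R})$. I would multiply the $i$-th eigenvalue equation in \eqref{Eq:Linear_problem} by $\bar{u}_i$ and integrate over $\mathds{R}$, exploiting two structural identities. First, integration by parts gives $\langle u_i'', \bar{u}_i \rangle = - \vert \vert u_i' \vert \vert_2^2$, which is real and non-positive. Second, $\langle u_i', \bar{u}_i \rangle$ has vanishing real part, since $\mathfrak{Re}(u_i' \bar{u}_i) = \tfrac{1}{2}(|u_i|^2)'$ and $|u_i|^2$ decays at infinity thanks to $u_i \in \mathcal{H}^2(\mathds{R})$. Together these produce the scalar identity
\begin{equation*}
\lambda \cdot \vert \vert u_i \vert \vert_2^2 \; = \; -D_i \vert \vert u_i' \vert \vert_2^2 \; + \; c_i \, \langle u_i', \bar{u}_i \rangle \; + \; \sum_{j=1}^n \langle f_{i,j}\, u_j , \bar{u}_i \rangle,
\end{equation*}
whose right-hand side neatly splits into a non-positive real term, a purely imaginary convective term, and a coupling term.

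To prove the real-part bound I would take the real part and use $\mathfrak{Re}\lambda \geq 0$ together with the crude estimate $|\mathfrak{Re} \langle f_{i,j} u_j, \bar{u}_i \rangle| \leq \vert \vert f_{i,j} \vert \vert_\infty \cdot \vert \vert u_j \vert \vert_2 \cdot \vert \vert u_i \vert \vert_2$, giving
\begin{equation*}
0 \; \leq \; \mathfrak{Re}\lambda \cdot \vert \vert u_i \vert \vert_2^2 \; + \; D_i \vert \vert u_i' \vert \vert_2^2 \; \leq \; \sum_{j=1}^n \vert \vert f_{i,j} \vert \vert_\infty \cdot \vert \vert u_j \vert \vert_2 \cdot \vert \vert u_i \vert \vert_2 .
\end{equation*}
The coupling across the $n$ components is then broken by the standard max-norm trick: select an index $k$ achieving $\vert \vert u_k \vert \vert_2 = \max_i \vert \vert u_i \vert \vert_2$. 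Every $\vert \vert u_j \vert \vert_2$ on the right is then dominated by $\vert \vert u_k \vert \vert_2$, so the sum is at most $M_k \cdot \vert \vert u_k \vert \vert_2^2$. Since $u_k$ is nontrivial (as the eigenfunction of maximal norm), division yields $\mathfrak{Re}\lambda \leq M_k \leq \max_i M_i$.

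For the imaginary-part bound I would take the imaginary part of the same identity at this maximizing index $k$. The convective term is purely imaginary, so Cauchy--Schwarz delivers
\begin{equation*}
|\mathfrak{Im}\lambda| \cdot \vert \vert u_k \vert \vert_2^2 \; \leq \; c_k \cdot \vert \vert u_k' \vert \vert_2 \cdot \vert \vert u_k \vert \vert_2 \; + \; M_k \cdot \vert \vert u_k \vert \vert_2^2.
\end{equation*}
What remains is an $H^1$ control on $u_k$, and this is already encoded in the previous real-part inequality: both summands on its left-hand side were non-negative, so $D_k \vert \vert u_k' \vert \vert_2^2 \leq M_k \vert \vert u_k \vert \vert_2^2$, and under $D_k > 0$ one obtains $\vert \vert u_k' \vert \vert_2 \leq \sqrt{M_k/D_k} \cdot \vert \vert u_k \vert \vert_2$. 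Substituting closes the estimate. There is no genuine obstacle here; the only conceptual subtlety is that the imaginary-part bound really does require $D_i > 0$ for every $i$, because if even one diffusion coefficient vanished the real-part inequality would no longer deliver an $L^2$ bound on the corresponding derivative.
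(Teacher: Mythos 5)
Your proof is correct and follows essentially the same argument as the paper: multiply the $i$-th equation by $\bar u_i$, observe that the diffusion term is real and non-positive and the convective term is purely imaginary, then isolate the maximizing index $k$ to bound $\mathfrak{Re}\lambda$, and recycle the real-part inequality at $k$ (using $D_k>0$) to control $\|u_k'\|_2$ for the imaginary-part bound. The only cosmetic difference is that you introduce the maximizing index after taking the real part rather than before; the estimates are identical.
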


We apply Theorem \ref{Thm:Spectral_bound} to the Operator \eqref{Eq:Linear_Operator} that already encodes the transformation into the weighted space. For a traveling wave $a(x), i(x)$ with speed $c=2$ and the weight $w(x) = e^{-x}$, $\mathcal{L}$ reduces to
\begin{align}
\begin{aligned}
\mathcal{L}u &:= \frac{\partial^2}{ \partial x^2} u - u  (2a+i  ) -va,  \\
\mathcal{L}v &:= d \frac{\partial^2}{ \partial x^2} v  + ( 2 - 2d  ) \frac{\partial}{ \partial x} v  + v \cdot ( a + d -2  ) + u(  2a+i+r). \label{EQ:Reduced_Lin_Op}
\end{aligned}
\end{align}
Here, it is essential that $w'/w=-1$ is constant, such that we are in the setting of Theorem \ref{Thm:Spectral_bound}.

\begin{proposition} \label{Prop:Eigenvalue_bound}
For $d \in (0,1)$, consider $\mathcal{L}$ \eqref{EQ:Reduced_Lin_Op} as an operator
\begin{align}
H^2(\mathds{R}) \times H^2(\mathds{R}) \rightarrow  L^2(\mathds{R}) \times L^2(\mathds{R}).
\end{align}
For all $\lambda \in \mathds{C}$ in the discrete spectrum of $\mathcal{L}$ with $\mathfrak{Re}\lambda \geq 0$, it holds that
\begin{align}
 \vert \lambda \vert  & \leq \sqrt{2}  \Big [ (2-2d) \sqrt{ \frac{5+r}{d} } + 5+r \Big ].
\label{Eq:Practical_Energy_Bound}
\end{align}
\begin{proof}
This is a direct consequence of Theorem \ref{Thm:Spectral_bound}. We can estimate the two constants $M_a$ and $M_i$, see Eq. \eqref{Eq:Mi_eigenval_constants}, via the simple bounds $ \vert i \vert  \leq 2,  \vert a \vert  \leq 1$:
\begin{align}
M_a  &=  \vert 2a-i \vert _\infty +  \vert a \vert _\infty  \leq 3, \\
M_i &=  \vert a+d-2 \vert _\infty +  \vert 2a+i+r \vert _\infty \leq 5+r .
\end{align}
\end{proof}
\end{proposition}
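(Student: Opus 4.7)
The plan is to apply Theorem~\ref{Thm:Spectral_bound} directly to the eigenvalue equation $\lambda(u,v) = \mathcal{L}(u,v)$ for the reduced operator \eqref{EQ:Reduced_Lin_Op}. The crucial structural observation that makes this possible is that with the specific weight $w(x)=e^{-x}$ and wave speed $c=2$, the coefficient $c+2w'/w$ of $u_x$ vanishes, so the system already has exactly the form \eqref{Eq:Linear_problem} with constant-in-$x$ diffusion and convection: for the $u$-equation $D_u=1$, $c_u=0$, and for the $v$-equation $D_v=d$, $c_v=2-2d$. Since $d\in(0,1)$, both $D_u$ and $D_v$ are strictly positive, so both the real- and imaginary-part bounds of Theorem~\ref{Thm:Spectral_bound} are available.

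The next step is to bound the zeroth-order coefficients uniformly in $x$ via $\|a\|_\infty \le 1$ and $\|i\|_\infty \le 2$, which come from Theorem~\ref{Thm:existence_small_epsilon} (with $a+i\le 1$ at the maximum of $a$ by Proposition~\ref{Triangles_3D_Thm}, and $i_{-\infty}<2$ by \eqref{Eq:bound_invading_iminf}, while $i$ is monotone). Inserting these into \eqref{Eq:Mi_eigenval_constants} yields
\[
M_u = \|2a+i\|_\infty + \|a\|_\infty \le 5,\qquad M_v = \|a+d-2\|_\infty + \|2a+i+r\|_\infty \le 5+r.
\]

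Theorem~\ref{Thm:Spectral_bound} then gives $\mathfrak{Re}\,\lambda\le\max\{M_u,M_v\}=5+r$, and because $c_u=0$ while $c_v>0$ with a $1/\sqrt{D_v}$ factor that blows up as $d\downarrow 0$, the $v$-row clearly dominates the imaginary bound:
\[
|\mathfrak{Im}\,\lambda|\le \max_{k\in\{u,v\}}\Bigl(c_k\sqrt{M_k/D_k}+M_k\Bigr) = (2-2d)\sqrt{(5+r)/d}+(5+r).
\]
Combining these via the elementary $|\lambda|\le\sqrt{2}\,\max(|\mathfrak{Re}\,\lambda|,|\mathfrak{Im}\,\lambda|)$, and noting that the imaginary-part bound already dominates the real-part bound, produces exactly \eqref{Eq:Practical_Energy_Bound}.

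There is essentially no genuine obstacle in this argument; the substantive work has already been done in Theorem~\ref{Thm:Spectral_bound}. The one point that deserves attention is the coefficient identification $c_u=0$, which is a happy consequence of matching the weight exponent to the critical FKPP decay rate $c/2=1$ and which accounts for the fact that the bound degenerates only through the $v$-equation as $d\downarrow 0$. One should also record that the eigenfunctions indeed lie in the unweighted $\mathcal{H}^2\times\mathcal{H}^2$, which is built into the choice of function space \eqref{Def:Spaces} once the weight has been absorbed via $u=\tilde A/w$, $v=\tilde I/w$.
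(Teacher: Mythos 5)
Your proposal is a direct application of Theorem~\ref{Thm:Spectral_bound} to the operator~\eqref{EQ:Reduced_Lin_Op} after reading off $D_u=1,\,c_u=0$ and $D_v=d,\,c_v=2-2d$, which is precisely how the paper proves this proposition. The only cosmetic difference is in the intermediate constant for the first component: you correctly take the zeroth-order coefficient to be $-(2a+i)$ and obtain $M_u\le 5$, whereas the paper writes $|2a-i|_\infty$ (an apparent sign typo) and records $M_a\le 3$; since in either case $M_v=5+r$ is the dominant constant entering both the real- and imaginary-part bounds (as you observe, the $v$-row carries the $1/\sqrt{D_v}$ degeneracy), the final estimate~\eqref{Eq:Practical_Energy_Bound} comes out identically.
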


\subsection{The Evans function}
\label{Sec:spectrum_evans}

The discrete spectrum within the region of consistent splitting of an operator can be evaluated numerically with the help of the Evans function. The following concepts are presented in much more detail by B. Sandstede, who wrote a great overview of the topic \cite{SANDSTEDE_stability_traveling}, a more practical introduction and the numerical tool STABLAB \cite{Barker_Stablab} have been written by B. Barker et al. \cite{Barker_Evans_computations}.

Recall that for $\mathcal{L}$ as in Proposition \ref{Prop:Ess_Stability}, if $\lambda \in \Sigma_d$, this is equivalent to saying that there exists a bounded solution of a linear ODE of type
\begin{align}
U' = M(x,\lambda) \cdot U, \qquad x \in \mathds{R}, \label{Def:Operator_as_ODE_repeated}
\end{align}
compare \eqref{Def:Operator_as_ODE}. The matrix $M(x, \lambda)$ is given by \eqref{Def:Matrix_general_case} and encodes the shift of the equation into the weighted space. It is now essential that we consider only $\lambda \in \mathds{C}$ such that the matrices $M(x, \lambda)$ are analytic in $(\lambda,x)$ up to $x = \pm \infty$, with strictly hyperbolic limits $M(\pm \infty, \lambda)$.

Since the limiting matrices $M(\pm \infty, \lambda)$ are hyperbolic, the theory of \textit{exponential dichotomies} implies that any bounded solution $U(x)$ must vanish exponentially fast as $x \rightarrow \pm \infty$. Moreover, it asymptotically approaches the unstable manifold of the constant matrix $M(- \infty, \lambda)$ as $x \rightarrow - \infty$, and the stable manifold of $M(+ \infty, \lambda)$ as $x \rightarrow + \infty$. Therefore, a bounded solution exists if and only if the trajectories that emerge from these manifolds intersect. This allows us to compute the Evans function: it is a determinant that evaluates to zero if and only if the solutions that decay at $-\infty$ and those that decay at $+\infty$ intersect, and are linearly dependent. The details are given in Section 4.1 in \cite{SANDSTEDE_stability_traveling}.

\begin{definition}[Evans function] \label{Def:Evans_function}
Consider the ODE \eqref{Def:Operator_as_ODE_repeated}, with $\lambda \in \mathds{C}$ such that the matrices $M(x, \lambda)$ are analytic in $(\lambda,x)$ up to $x = \pm \infty$, with hyperbolic limits $M(\pm \infty, \lambda)$.

Let
\begin{align}
&X_1(x, \lambda), \dotso, X_{k_1}(x, \lambda)
\intertext{be $k_1$ linearly independent representatives of those solutions that decay exponentially as $x \rightarrow - \infty$, and let}
&Y_1(x,\lambda), \dotso, Y_{k_2}(x,\lambda)
\end{align}
be $k_2$ linearly independent representatives of those solutions that decay exponentially as $x \rightarrow + \infty$, with $k_1, k_2 >0$ and $k_1 +  k_2 = n$. The Evans function $E(\lambda)$ is defined as
\begin{align}
E( \lambda ) := \text{det} \big ( X_1(x, \lambda) \big \vert \dotso  \big \vert X_{k_1}(x,\lambda) \big \vert Y_1(x, \lambda)  \big \vert \dotso Y_{k_2, \lambda} \big )  \Big \vert_{x=0}. \label{Eq:Evans}
\end{align}
\end{definition}

\textit{Remark}: The Evans function is not unique, since the representatives $X_i, Y_i$ are not unique. However, it holds that $E(\lambda) = 0$ if and only if System \eqref{Def:Operator_as_ODE_repeated} has a bounded solution. Moreover, $E(\lambda)$ is analytic if the $X_i, Y_i$ are chosen analytically in $\lambda$, which can be achieved, see section 4.1 in \cite{SANDSTEDE_stability_traveling}. Then, it suffices to calculate $E(\lambda)$ along the boundary of a domain $\mathcal{P} \subset \mathds{C}$: the winding number of $E(\lambda)$ along $\partial \mathcal{P}$ corresponds to the number of zeros inside the domain.\\

We first need a numerical solution of a wave-profile, which goes hand in hand with finding the correct value of $\iminf$ such that the the other limit of the traveling wave fulfills $\ipinf = 0$. Given some $\iminf  >1 $, we solve the forward ODE \eqref{Eq:perturbed_wave_ODE} under initial data that approximate the unstable manifold of a fixed point $(a,a',i,i') = (0,0,\iminf,0)$, its asymptotic direction is given in Corollary \ref{Cor:Asymptotic_direction_unstable}. The limit as $x \rightarrow + \infty$ of this trajectory is Lyapunov-stable, such that the numerical forward solution converges if the initial data are sufficiently close to the unstable manifold of $ (0,0,\iminf,0)$. We then change $\iminf$ until $ \ipinf=0$. This differs from the approach suggested in \cite{Barker_Evans_computations}: the continuum of fixed points makes the proposed projective boundary value approach invalid. However, since the trajectory converges along the correct stable manifold as $x \rightarrow + \infty$, our forward approach is reasonable. This is a consequence of the trapping argument in Proposition \ref{Triangles_3D_Thm}, see also Proposition 6.3 in \cite{Kreten2022}. The numerical evaluation suggests that the value $\ipinf$ is a monotone function of $\iminf$, such that any root-finding algorithm converges quickly to the (presumably) unique value of $\ipinf$ such that $\iminf = 0$. The results are depicted in Figure \ref{Tab:Root_iminf}.

\begin{figure}[h]
\begin{tabular}{l | c | c | c | c | c | c}
\multicolumn{7}{c}{}    
\\ 

\multicolumn{2}{r | }{  $d = 0.001 $ } & $d = 0.01 $ & $d = 0.1 $& $d = 0.2 $ & $d = 0.3 $ & $d = 0.4 $ \\ \hline
     $r=0$ & $1.99985$ & $1.99848$ & $1.98489$ & $1.96999$ & $1.95532$ & $1.94091$ \\ \hline
     $r=1$ & $1.99984$ & $1.99842$ & $1.98430$ & $1.96897$ & $1.95403$ & $1.93948$ 
\end{tabular}
 \caption{The value of $\lim_{x \rightarrow - \infty}i(x)$ of the invading fronts (where $\ipinf = 0$), for different values of $r, d$ and for speed $c=2$. The numerical values suggest that the effect of $d$ is negligible, and that our bound in Lemma \ref{Lem:Epsilon_limit_bounds} is not sharp. The root finding was performed with Wolfram Mathematica.} \label{Tab:Root_iminf}
\end{figure}

\begin{figure}[h]
\centering
\includegraphics[width = 0.5\textwidth]{./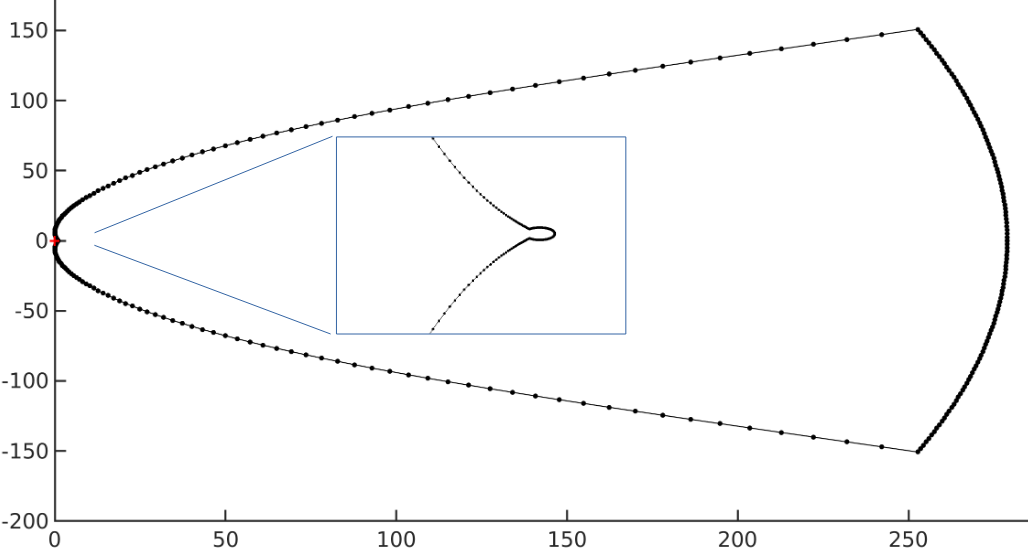}
\caption{The Evans function for the weight $w(x) = e^{-x}$, for $c=2.0, d = 0.3$ and $r=1$, evaluated along the boundary of the domain $\mathcal{P}$, see \eqref{Eq:Domain_Evans}. The boundary consists of three parts: a semisircle with radius $\delta = 10^{-3}$ around the origin, two lines along the imaginary axis, and a semi-circle with radius $R$, see Proposition \ref{Prop:Eigenvalue_bound}. The winding number is equal to zero, as a consequence there are no elements of the discrete spectrum of $\mathcal{L}$ within $\mathcal{P}$. The simulations for all other parameters result in similar pictures.} \label{Fig:Evans}
\end{figure}

The various numerical challenges that arise when computing the Evans function, as well as their solutions, are described in detail by Barker et al. \cite{Barker_Evans_computations}, who also suggest using their library STABLAB \cite{Barker_Stablab}. Given a numerical solution of the traveling front on an interval $x \in [-L,+L]$ (we choose $L = 50$), centered such that $a'(0) = 0$, STABLAB performs three main steps: first, the unstable and stable spaces of System \eqref{Def:Operator_as_ODE_repeated} are approximated by the unstable and stable eigenspaces of the constant matrices $M(\pm \infty, \lambda)$. Second, bases of these eigenspaces that are analytic in $\lambda$ are chosen. Third, the resulting ODEs are solved (forwards on $[-L,0]$ and backwards on $[0,L]$), in an exponentially weighted space to reduce computation time and increase numerical stability. We evaluate the Evans function along $\partial \mathcal{P}$, the boundary of the domain
\begin{align}
\mathcal{P} := \{ \gamma \in \mathds{C}\big \vert \, \mathfrak{Re} \gamma \geq 0, \, \delta \leq  \vert \gamma \vert  \leq R \}, \label{Eq:Domain_Evans}
\end{align}
where the radius $R$ is the upper bound for any element of the discrete spectrum with non-negative real-part from Proposition \ref{Prop:Eigenvalue_bound}, and a small distance $\delta = 10^{-3}$, since the essential spectrum touches the origin. Note that analytically, the Evans function can be extended up to the origin, see e.g. \cite{FAYE_lotke_volterra_critical_stability}, due to the lack of a discrete eigenvalue zero.

As $d \rightarrow 0$, the computation of the Evans function gets numerically unstable. We get reliable results for $d \geq 0.1$, and found no points of the non-negative discrete spectrum for all choices of $r,d$ as in Table \ref{Tab:Root_iminf}. One of the resulting contours is depicted in Figure \ref{Fig:Evans}, all others were very similar.

\section{Geometric singular perturbation theory}
\label{Sec:Geom_sing_appl}

For passing continuously from $d = 0$ to $d \ll 1$ in System \eqref{Eq:perturbed_wave}, we apply geometric singular perturbation theory for smooth systems, based on work by F\'{e}nichel \cite{Fenichel_orgiginal_work, Jones_Singular_Perturbation}. We present the underlying theory and its application to the present case (briefly, since it is rather standard). Given two $C^\infty$ functions $f$ and $g$, consider the following smooth field:
\begin{align}
\begin{aligned}
\frac{d }{dt} z &= f(z,y,d), \\
\frac{d }{ dt} y &= \epsilon g(z,y,d), \quad \epsilon \sim 0. \label{Eq:Fenichel_fast_new}
\end{aligned}
\end{align}
The variable $z$ is called the \textit{fast variable}, the variable $y$ is called the \textit{slow variable}. Geometric singular perturbation theory leads to a result that describes trajectories that are continuous in $\epsilon$ in a neighborhood of $\epsilon = 0$. Introduced by F\'{e}nichel in the 70s \cite{Fenichel_orgiginal_work}, this approach has been classical for dealing with singularly perturbed systems, check \cite{Jones_Singular_Perturbation}. The central assumption is given as follows, it ensures that the fast variables indeed move along fast stable and unstable manifolds:
\begin{definition}
For $z \in \mathds{R}^n, y \in \mathds{R}^l$, consider a fixed point $P \in \mathds{R}^{n+l}$ of a dynamical system as in \eqref{Eq:Fenichel_fast_new}. It is called a \textit{normally hyperbolic} critical point, if the linearization of the system around $P$ has exactly $l$ Eigenvalues that lie on the imaginary axis.
\end{definition}
For $\epsilon = 0$, the $l$ Eigenvalues with zero real-part of the Jacobian at a normally hyperbolic point must correspond to the slow (constant for $\epsilon = 0$) variables. Only the movement in the fast direction remains. In this setting, a separation of time-scales occurs for $\epsilon \neq 0$ sufficiently small, and one can treat the fast and slow variables separately. The following theorem is a combination of Theorems 1-3 in the monograph of C. Jones \cite{Jones_Singular_Perturbation}, this compact form is due to Rottschäfer \cite{ROTTSCHAFER_Sing_Pert_Wave}:

\begin{theorem} \label{Thm:Fenichel_new}
Given a system of type \eqref{Eq:Fenichel_fast_new}, such that when $\epsilon = 0$, the system of equations has a compact, normally hyperbolic manifold of critical points, $\mathcal{M}_0$, where we assume that $\mathcal{M}_0$ is given as the graph of a $C^\infty$ function $h^0(y)$. Then for every $p \geq 1$, there exists an $\epsilon_0>0$ such that for all $\epsilon \in [0, \epsilon_0)$, there exists a Manifold $\mathcal{M}_\epsilon$:
\begin{enumerate}
\item $\mathcal{M}_\epsilon$ is locally invariant under the flow defined by \eqref{Eq:Fenichel_fast_new}.
\item $\mathcal{M}_\epsilon = \{ (z,y) \vert z = h^\epsilon(y) \}$, for a function $h^\epsilon$ which is $C^p$ in both $\epsilon$ and $y$, and for $y$ in some compact set $S$.
\item There are locally stable manifolds $W^s(\mathcal{M}_\epsilon)$ and unstable ones $W^u(\mathcal{M}_\epsilon)$, that lie within $\mathcal{O}(\epsilon)$ of and are diffeomorphic to $W^s(\mathcal{M}_0)$ and $W^u(\mathcal{M}_0)$, respectively. Here, $W^s(\mathcal{M}_0)$ and $W^u(\mathcal{M}_0)$ are the (fast) stable and unstable manifolds of the unperturbed manifold $\mathcal{M}_0$. 
\end{enumerate}
\end{theorem}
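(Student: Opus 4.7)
The plan is to establish persistence of the normally hyperbolic invariant manifold $\mathcal{M}_0$ and of its stable and unstable foliations via a smooth graph-transform / Lyapunov--Perron fixed-point argument, with parameters depending continuously on $\epsilon$. The approach is classical and goes back to F\'enichel; I would organize it in three steps and flag the uniformity estimates as the hardest point.

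First, I would set up coordinates and globalize. Near $\mathcal{M}_0 = \{ z = h^0(y) \}$, introduce tubular coordinates $z = h^0(y) + n$ for $y$ in a compact neighborhood $S$ of the region of interest, so that $\mathcal{M}_0$ becomes the zero section $\{n=0\}$. Normal hyperbolicity ensures that the Jacobian of the vector field at $\mathcal{M}_0$ splits as zero eigenvalues on $TS$ plus a hyperbolic block $\partial_z f \vert_{\mathcal{M}_0}$ on the normal directions. I would then multiply the vector field by a smooth cutoff supported in a narrow tubular neighborhood so that all subsequent estimates extend globally without changing the flow on a still-smaller neighborhood.

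Second, I would find $\mathcal{M}_\epsilon$ as a graph $\{n = H^\epsilon(y)\}$. Invariance under the flow is equivalent to the functional equation
\begin{equation*}
\epsilon\, D_y H^\epsilon(y) \cdot g\bigl(h^0(y) + H^\epsilon(y), y, \epsilon\bigr) = f\bigl(h^0(y) + H^\epsilon(y), y, \epsilon\bigr),
\end{equation*}
which at $\epsilon = 0$ is solved by $H^0 \equiv 0$, since $f$ vanishes on $\mathcal{M}_0$. Because the normal part of $\partial_z f$ at $\mathcal{M}_0$ is invertible by hyperbolicity, the linearization of this equation at $H^0$ is an isomorphism on the Banach space of $C^p$ sections over $S$, and an implicit-function or contraction argument yields a unique small $C^p$ solution $H^\epsilon$, jointly $C^p$ in $(\epsilon, y)$, on some interval $[0, \epsilon_0(p))$. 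The shrinkage $\epsilon_0 \to 0$ as $p \to \infty$ is a standard spectral-gap artifact.

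Third, I would construct $W^{s,u}(\mathcal{M}_\epsilon)$ and show they are $\mathcal{O}(\epsilon)$-close and diffeomorphic to $W^{s,u}(\mathcal{M}_0)$. Using the exponential dichotomy of the normal variational equation along $\mathcal{M}_\epsilon$ (which persists from $\mathcal{M}_0$ by continuity of the splitting), I would characterize $W^s(\mathcal{M}_\epsilon)$ via the Lyapunov--Perron method: its points are exactly those whose forward orbit stays exponentially close to $\mathcal{M}_\epsilon$, translated by variation of parameters into a fixed-point equation in an exponentially weighted $C^0$ space on $[0,\infty)$. Contraction uses the hyperbolic rates, and smooth dependence of the fixed point on $\epsilon$ gives both the $\mathcal{O}(\epsilon)$-closeness and the diffeomorphism. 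The unstable side is symmetric under time reversal.

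The main obstacle is obtaining uniform constants in the contractions that hold simultaneously for $\epsilon \in [0, \epsilon_0)$, for $y$ throughout the compact set $S$, and for the first $p$ derivatives. This requires an adapted metric in which the stable-unstable splitting is nearly orthogonal, a cutoff that does not erode the spectral gap, and repeated differentiation of the fixed-point equations with bookkeeping of the higher-order bounds. Once those uniform estimates are in place, the three conclusions (local invariance, $C^p$ graph representation, existence of the stable/unstable manifolds) follow from the standard invariant-manifold apparatus; in the application to System \eqref{Eq:perturbed_wave_ODE} one then takes $\mathcal{M}_0$ to be the slow manifold $\{i' = -a(a+i+r)/c\}$ and lets $d$ play the role of $\epsilon$, as outlined in the text preceding Corollary \ref{Cor:Cont_finite}.
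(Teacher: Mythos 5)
The paper does not prove this statement: it is imported as a combination of Theorems 1--3 in Jones \cite{Jones_Singular_Perturbation}, in the compact form of Rottsch\"afer \cite{ROTTSCHAFER_Sing_Pert_Wave}, both resting on F\'enichel's original work. So there is no internal proof to compare against; what can be checked is whether your sketch faithfully outlines the standard argument, and in broad strokes it does: tubular coordinates, a cutoff, a graph representation for $\mathcal{M}_\epsilon$, Lyapunov--Perron for the foliations, time reversal for the unstable side.

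Your second step, however, has a gap. A minor one first: the invariance condition should read
\begin{equation*}
\epsilon\,\bigl(Dh^0(y)+D H^\epsilon(y)\bigr)\cdot g\bigl(h^0(y)+H^\epsilon(y),y,\epsilon\bigr)=f\bigl(h^0(y)+H^\epsilon(y),y,\epsilon\bigr),
\end{equation*}
you dropped the $\epsilon\,Dh^0(y)\,g$ term (harmless, as it vanishes at $\epsilon=0$). The more serious one: the implicit-function argument does not close as stated. For $\epsilon>0$ the map $H\mapsto\epsilon\,D_yH\cdot g-f(h^0+H,\cdot,\epsilon)$ loses a derivative, carrying $C^p$ sections to $C^{p-1}$ sections, and its linearization at $(H,\epsilon)=(0,0)$, namely multiplication by $\partial_z f\vert_{\mathcal{M}_0}$, is injective but \emph{not} surjective as a map $C^p\to C^{p-1}$, so the Banach-space implicit function theorem is unavailable. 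This loss-of-derivatives obstruction is exactly why F\'enichel's proof does not invoke the IFT: one first produces an invariant Lipschitz graph by a graph transform (or a Lyapunov--Perron contraction in a $C^0$-type norm), then bootstraps to $C^p$ by a fiber contraction, and it is the spectral-gap ratio in that step, not an ``artifact,'' that forces $\epsilon_0$ to shrink with $p$. Your ``contraction argument'' alternative is the right mechanism, but the claim that the linearization is an isomorphism on the Banach space of $C^p$ sections is the point at which the argument, as written, breaks.
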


We follow the examples in \cite{Jones_Singular_Perturbation, ROTTSCHAFER_Sing_Pert_Wave}, and analyze the present system for $d \sim 0$. We bring System \eqref{Eq:perturbed_wave_ODE} into slow-fast form as in Eq. \eqref{Eq:Fenichel_fast_new}, and change the phase-time to the fast phase-time $t := x / d$:
\begin{align}
\begin{aligned} \label{Eq:System_Fenichel_Wave_new}
\frac{d}{ d t } i' &= - [ci' + ra + a(a+i)] =: f(a,a',i,i'), \\
\frac{d}{ d t } \begin{pmatrix}
a \\ a' \\ i
\end{pmatrix}
&= d \cdot \begin{pmatrix}
a' \\
a(a+i) -a -ca'\\ i'
\end{pmatrix} =: d \cdot g(a,a',i,i').
\end{aligned}
\end{align}
Here $(a,a',i) $ are the slow variables and $i' $ is the fast variable. For $d = 0$, there exists a three-dimensional smooth manifold of critical points
\begin{align}
\mathcal{M}_0 = \Big \{ (a,a',i,i') \in \mathds{R}^4\Big \vert \, i' = h^0(a,a',i) :=  - \frac{ a(a+i+r) }{c }
\Big \}. \label{Eq:Fenichel_M0_system}
\end{align}
We check normal hyperbolicity of $\mathcal{M}_0$. For $d = 0$, we calculate the Jacobian of the system:
\begin{align}
J_{(a,a',i,i') \vert M_0} = \begin{pmatrix}
0 & 0 &  0 & 0 \\
0 & 0 &  0 & 0 \\
0 & 0 &  0 & 0 \\
r + 2a +i & 0 & a & -c
\end{pmatrix},
\end{align}
which has a single non-zero Eigenvalue $\lambda = -c$, with fast direction $i'$. Thus, all points in $\mathcal{M}_0$ have a fast stable manifold of dimension one and no unstable manifold (in fact, $\mathcal{M}_0$ is even a global attractor). In order to apply Theorem \ref{Thm:Fenichel_new}, we now choose an arbitrarily large, but bounded subset of $\mathcal{M}_0$. Then, for $d >0$ sufficiently small, there exists a manifold $\mathcal{M}_d$, that is of distance $\mathcal{O}(d)$ to $\mathcal{M}_0$, together with a local unstable manifold $W^s(\mathcal{M}_d)$ that is of distance $\mathcal{O}(d)$ to $W^s(\mathcal{M}_0)$.

For $d \neq 0$, we can undo our transformation and change back to $x= d \cdot t$. Then, we get that on $\mathcal{M}_d$, the perturbed system is close unperturbed system, in the sense that
\begin{align}
\frac{d}{dx} \begin{pmatrix}
a \\ a' \\ i
\end{pmatrix}
& =\begin{pmatrix}
a' \\
a(a+i) -a -ca'\\ i'
\end{pmatrix}
\intertext{with $i'=h^d(a,a',i)$, subject to the condition}
ci' & =  - a(a+i+r) + \mathcal{O}(d), \label{Eq:Eps_closeness_fenichel}
\end{align}
which is an $\mathcal{O}(d)$-perturbation of \eqref{Eq:Fenichel_M0_system}. Equation \eqref{Eq:Eps_closeness_fenichel} implies that the restriction of $S_d$ to $\mathcal{M}_d$ is $\mathcal{O}(d)$-close to the unperturbed system $S_0$. Thus, along $\mathcal{M}_d$, the System $S_d$ can be viewed as a regular perturbation of $S_0$, which we obtain as a (regular) limit as $d \rightarrow 0$:
\begin{align}
\frac{d}{dx} \begin{pmatrix}
a \\ a' \\ i
\end{pmatrix}
& =\begin{pmatrix}
a' \\
a(a+i) -a -ca'\\ - \frac{a(a+i+r)}{c}
\end{pmatrix}. \label{Eq:Unperturbed_fenichel_new}
\end{align}
We summarize this in

\begin{corollary} \label{Cor:Feniches_our_system}
Fix $p \geq 1$ and choose any smooth bounded set $\bar{\mathcal{M}_0} \subset \mathcal{M}_0$, for $\mathcal{M}_0$ as defined in \eqref{Eq:Fenichel_M0_system}. There exists some $d^\ast >0$, such that for all $d \in (0, d^\ast)$, the System \eqref{Eq:System_Fenichel_Wave_new} has a three-dimensional $C^p$-manifold $\bar{\mathcal{M}_d}$, invariant under $S_d$ and $\mathcal{O}(d)$-close to $\bar{\mathcal{M}_0}$. The flow on this manifold is an $\mathcal{O}(d)$-perturbation of \eqref{Eq:Unperturbed_fenichel_new}. Moreover, $\bar{\mathcal{M}_d}$ has no unstable manifold, but only a fast stable manifold $W^s(\bar{\mathcal{M}_d})$ that is locally diffeomorphic to and within range of $\mathcal{O}(d)$ to $W^s(\bar{\mathcal{M}_0})$.
\end{corollary}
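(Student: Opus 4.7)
The plan is to verify directly that the hypotheses of Theorem \ref{Thm:Fenichel_new} hold for the slow-fast reformulation \eqref{Eq:System_Fenichel_Wave_new} on a compact enlargement of $\bar{\mathcal{M}_0}$, and then to translate the conclusions back to the original phase-time $x = d \cdot t$.

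First I would verify normal hyperbolicity. The Jacobian of \eqref{Eq:System_Fenichel_Wave_new} at $d = 0$, computed in the excerpt preceding the corollary, has three zero eigenvalues corresponding to the slow directions $(a,a',i)$ and a single eigenvalue $\lambda = -c$ whose eigenvector points in the fast direction $i'$. Since $c > 0$ is a fixed positive constant, this eigenvalue is bounded away from the imaginary axis uniformly along $\mathcal{M}_0$, so every point of $\bar{\mathcal{M}_0}$ is normally hyperbolic. Moreover $\mathcal{M}_0$ is globally the graph of the smooth function $h^0(a,a',i) = -a(a+i+r)/c$ from \eqref{Eq:Fenichel_M0_system}, so the representation hypothesis of Theorem \ref{Thm:Fenichel_new} is satisfied on any compact neighborhood of $\bar{\mathcal{M}_0}$.

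With these hypotheses in hand, I would invoke Theorem \ref{Thm:Fenichel_new} with $\epsilon = d$ and with the chosen smoothness $p \geq 1$: there exists $d^\ast > 0$ such that for all $d \in (0, d^\ast)$ a locally invariant $C^p$-manifold $\bar{\mathcal{M}_d} = \{ i' = h^d(a,a',i) \}$ exists, with $\|h^d - h^0\|_{C^p} = \mathcal{O}(d)$ uniformly on the compact set. Because the single non-trivial eigenvalue $-c$ is strictly negative, the fast spectrum is entirely stable: $\bar{\mathcal{M}_d}$ has no unstable manifold, and the one-dimensional local stable manifold $W^s(\bar{\mathcal{M}_d})$ provided by Theorem \ref{Thm:Fenichel_new} is both diffeomorphic to and $\mathcal{O}(d)$-close to $W^s(\bar{\mathcal{M}_0})$, which consists of the fast fibres along $i'$.

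Finally I would undo the time rescaling $t = x/d$ to return to $S_d$. On $\bar{\mathcal{M}_d}$, invariance gives $\frac{d}{dt} i' = \frac{d}{dt} h^d(a,a',i)$, and the right-hand side is $\mathcal{O}(d)$ by the chain rule applied to the slow equations of \eqref{Eq:System_Fenichel_Wave_new}. Hence $f(a,a',i,h^d) = \mathcal{O}(d)$, which rearranges to $c \cdot h^d = -a(a+i+r) + \mathcal{O}(d)$, i.e. the closeness estimate \eqref{Eq:Eps_closeness_fenichel}. Substituting this expression for $i'$ into the remaining slow equations and changing back to $x$ shows that the restricted flow on $\bar{\mathcal{M}_d}$ is an $\mathcal{O}(d)$-perturbation of \eqref{Eq:Unperturbed_fenichel_new}. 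The only subtlety is bookkeeping: Fenichel's theorem gives only \emph{local} invariance, so one must start from a slight compact enlargement of $\bar{\mathcal{M}_0}$ so that finite trajectory segments of interest remain inside the region where Theorem \ref{Thm:Fenichel_new} applies, and one must verify that the $\mathcal{O}(d)$ estimates hold uniformly on $\bar{\mathcal{M}_0}$.
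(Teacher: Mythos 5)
Your proposal is correct and takes essentially the same approach as the paper: verify normal hyperbolicity of $\mathcal{M}_0$ via the Jacobian (single nonzero eigenvalue $-c<0$ in the fast direction $i'$), invoke Theorem \ref{Thm:Fenichel_new} on a compact subset to obtain the invariant slow manifold $\bar{\mathcal{M}_d}$ with only a fast stable fibration, and undo the time rescaling to read off the $\mathcal{O}(d)$-perturbation \eqref{Eq:Eps_closeness_fenichel}. The paper derives the estimate $c\,h^d = -a(a+i+r)+\mathcal{O}(d)$ directly from $h^d - h^0 = \mathcal{O}(d)$ rather than by the chain-rule argument you use, but both routes are valid and lead to the same conclusion.
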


Now let $K >1$ and consider $d>0$ sufficiently small. The fixed point $(a,a',i,i') = (0,0,K,0)$ has an unstable manifold of dimension one (presumably a traveling wave). With the help Corollary \ref{Cor:Feniches_our_system}, we can track any finite segment of this manifold:

\begin{corollary}[c.f. Corollary \ref{Cor:Cont_finite}]
Let $K>1, c >0, r \geq 0$. First consider the fixed point $(\bar{a},\bar{a}',\bar{i}) = (0,0,K)$ of $S_0$, together with its one-dimensional unstable manifold $M^-_0(K)$. Fix any semi-open interval $ x \in (-\infty, T]$, where $T$ is finite, and assume that $M^-_0(K,x)\vert_{x \in (-\infty, T]}$ is smooth and bounded. Lift it into $\mathds{R}^4$ via \eqref{Eq:Fenichel_M0_system}.

Now consider the perturbed system $S_d$. There exist some $d^\ast > 0$ such that for all $d \in (0,d^\ast)$: the fixed point $(a,a',i,i') = (0,0,K,0)$ has an adjacent one-dimensional unstable manifold $M^-_d(K,x)\vert_{x \in (-\infty, T]}$, that is continuous in $d$ and converges to $M^-_0(K,x)\vert_{x \in (-\infty, T]}$ as $d \rightarrow 0$.

\begin{proof}
Fix a finite time-horizon $T \in \mathds{R}$ such that $M^-_0(K,x)\vert_{x \in (-\infty,T]}$ is smooth and bounded. Embed it into $\mathds{R}^4$ by setting $i'=h^0(a,a',i)$, see \eqref{Eq:Fenichel_M0_system}. Let $\bar{M_0}$ be a smooth bounded subset of $M_0$, sufficiently large such that
\begin{align}
M^-_0(K,x)\vert_{x \in (-\infty,T]} \subset \bar{M_0}.
\end{align}
There exists an $d^\ast>0$, such that for all $d \in [0, d^\ast)$: there exists a $C^p$ manifold $\bar{M_d}$ that is invariant under $S_d$, and the restriction of $S_d$ to $M_d$ is a perturbation of $S_0$ \eqref{Eq:Unperturbed_fenichel_new}.

Since $(a,a',i,i') = (0,0,K,0)$ is a fixed point, it can not lie on $W^s(\bar{M_d})$, and therefore must lie within $\bar{M_d}$ itself. Similarly, its unstable manifold, given in Corollary \ref{Cor:Asymptotic_direction_unstable}, must lie within $\bar{M_d}$. However, along $\bar{M_d}$, the flow $S_d$ is a regular perturbation of the unperturbed flow $S_0$, with a perturbation of order $\mathcal{O}(d)$. Thus, the finite segment $M^-_0(K,x)\vert_{x \in (-\infty,T]}$ is continuous in $d$, for $d \in (0, d_1)$, with $d_1$ possibly smaller than $d^\ast$, by a regular perturbation analogue to Proposition \ref{Prop:Epsilon_continuity}.
\end{proof}
\end{corollary}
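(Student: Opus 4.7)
The plan is to use Corollary \ref{Cor:Feniches_our_system} to turn this singular perturbation at $d=0$ into a regular perturbation problem on the slow manifold $\bar{\mathcal{M}_d}$, and then apply the same gluing argument that was used in Corollary \ref{Cor:Unstable_manifold_linearization} and Proposition \ref{Prop:Epsilon_continuity}. First I would lift $M^-_0(K,x)|_{x \in (-\infty,T]}$ into $\mathds{R}^4$ via $i' = h^0(a,a',i) = -a(a+i+r)/c$, so that it becomes a compact curve (together with the fixed point at $-\infty$) lying on $\mathcal{M}_0$. I would then fix a smooth bounded $\bar{\mathcal{M}_0} \subset \mathcal{M}_0$ containing a full tubular neighborhood of this curve and invoke Corollary \ref{Cor:Feniches_our_system}, producing some $d^\ast > 0$ and, for every $d \in (0,d^\ast)$, an invariant $C^p$-manifold $\bar{\mathcal{M}_d}$ which is $\mathcal{O}(d)$-close to $\bar{\mathcal{M}_0}$ and on which the flow of $S_d$ is an $\mathcal{O}(d)$-perturbation of the unperturbed flow \eqref{Eq:Unperturbed_fenichel_new}.

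The key intermediate step is to show that the one-dimensional unstable manifold $M^-_d(K)$ is actually contained in $\bar{\mathcal{M}_d}$. The fixed point $P_K := (0,0,K,0)$ satisfies $h^0(0,0,K) = 0$, so $P_K \in \mathcal{M}_0$; since $P_K$ is also a fixed point of $S_d$ while the local fast stable foliation $W^s(\bar{\mathcal{M}_d})$ consists of genuinely non-stationary trajectories, $P_K$ must lie on $\bar{\mathcal{M}_d}$ itself. At $P_K$ the Jacobian of $S_d$ has one fast stable eigenvalue $\lambda_2 = -c/d$ and three slow eigenvalues $\lambda_1 = 0$, $\lambda_3$, $\lambda_4$ that stay bounded as $d \to 0$, with $\lambda_4 > 0$ the unstable one from Corollary \ref{Cor:Asymptotic_direction_unstable}. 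Since $T_{P_K}\bar{\mathcal{M}_d}$ coincides with the three-dimensional slow eigenspace spanned by $(e_1, e_3, e_4)$, the unstable eigenvector $e_4$ is tangent to $\bar{\mathcal{M}_d}$, and by uniqueness of the local unstable manifold of $P_K$ the entire manifold $M^-_d(K)$ must lie on $\bar{\mathcal{M}_d}$.

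Once this containment is established, the remaining task is a regular perturbation of a three-dimensional smooth ODE on $\bar{\mathcal{M}_d}$, depending smoothly on $d$, whose $d=0$ version is \eqref{Eq:Unperturbed_fenichel_new}. I would then reproduce the standard gluing argument from the proof of Corollary \ref{Cor:Unstable_manifold_linearization}: local continuity of $M^-_d(K,x)$ near $P_K$ follows from continuous dependence of the unstable eigenvector $e_4(d)$ and a local straightening of the flow in a small ball around $P_K$, while continuity on the remaining compact interval from the exit of that ball up to time $T$ follows from a Gronwall estimate for the locally Lipschitz vector field restricted to $\bar{\mathcal{M}_d}$, with continuously varying initial data. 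The main obstacle is the containment step above: one must rule out that the unstable manifold escapes $\bar{\mathcal{M}_d}$ along the fast stable direction $e_2$, and this ultimately relies on the separation of time scales $\lambda_4 = \mathcal{O}(1)$ versus $\lambda_2 = -c/d \to -\infty$ that underlies Fenichel's theorem.
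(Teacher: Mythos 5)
Your proposal is correct and follows essentially the same route as the paper: lift the segment of $M^-_0(K)$ into $\mathds{R}^4$, invoke Fenichel via Corollary \ref{Cor:Feniches_our_system} to obtain the slow manifold $\bar{\mathcal{M}_d}$, argue that the fixed point $(0,0,K,0)$ and its one-dimensional unstable manifold must lie in $\bar{\mathcal{M}_d}$, and then reduce to a regular perturbation handled by the gluing argument of Proposition \ref{Prop:Epsilon_continuity}. Your spelling-out of the containment step via the tangent-space/eigenspace argument and uniqueness of the local unstable manifold is a bit more explicit than the paper's terse ``similarly, its unstable manifold must lie within $\bar{M_d}$,'' but it is the same idea.
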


\textbf{Numerical analysis:} The numerical analysis of the spectrum was performed via STABLAB \cite{Barker_Stablab}. The simulations of the Reaction-Diffusion System \eqref{Eq:Perturbed_PDE} were done with Wolfram Mathematica. The code can be accessed upon request.

% \newpage
\bibliography{bib_reduced.bib}{}
\bibliographystyle{plain}

\end{document}